\documentclass{amsart}

\usepackage{amsmath,amssymb,mathtools,xcolor}
\usepackage{geometry}
\usepackage{tikz}
\usetikzlibrary{shapes.geometric}

\usepackage{setspace}
\setdisplayskipstretch{1}
\usepackage{microtype}
\usepackage[pdftex,colorlinks,citecolor=blue,linkcolor=blue,urlcolor=blue]{hyperref}
\hypersetup{
  pdftitle={Typical intersecting families at n=2k+1 and n=2k+2},
  pdfsubject={Typical structure of uniform intersecting families}
}

\allowdisplaybreaks
\newtheorem{theorem}{Theorem}[section]
\newtheorem{lemma}[theorem]{Lemma}
\newtheorem{proposition}[theorem]{Proposition}

\newtheorem{conjecture}[theorem]{Conjecture}
\newtheorem*{bglwconjecture}{Conjecture}
\theoremstyle{definition}
\newtheorem{claim}[theorem]{Claim}
\newtheorem{definition}[theorem]{Definition}

\newcommand{\cC}{\mathcal C}

\newcommand{\cF}{\mathcal F}
\newcommand{\cG}{\mathcal G}
\newcommand{\cI}{\mathcal I}
\newcommand{\cS}{\mathcal S}
\newcommand{\cW}{\mathcal W}
\newcommand{\eps}{\varepsilon}
\newcommand{\1}{\mathbf 1}
\DeclareMathOperator{\KG}{KG}

\title{Typical intersecting families at $n=2k+1$ and $n=2k+2$}
\author{Lina Li$^\dagger$}
\address{$^\dagger$Department of Mathematics, University of Mississippi}
\email{lli5@olemiss.edu}
\thanks{Lina Li is supported by the Simons Foundation SFI-MPS-TSM-00024322.}
\date{}
\keywords{Intersecting family; Kneser graph; graph
container}
\subjclass[2020]{Primary 05D05; Secondary 05C69, 05A16}

\newcommand{\dv}{\operatorname{div}}
\newcommand{\cP}{\mathcal P}
\newcommand{\ind}{\mathrm{ind}}
\newcommand{\val}{\mathrm{val}}

\begin{document}
\begin{abstract}
A family of sets is \emph{intersecting} if every two members intersect, and \emph{trivial} if all its members contain a common element.
We determine the typical structure of $k$-uniform intersecting families on $2k+1$ and $2k+2$ elements as $k\to\infty$.
For $n=2k+2$, we prove that almost all intersecting families are trivial and that their number is
\[
(2k+2+o(1))\,2^{\binom{2k+1}{k-1}}.
\]
Together with Yang's recent result for $n\ge 2k+3$, this settles a conjecture of Balogh, Garcia, Li, and Wagner.
For $n=2k+1$, almost all intersecting families are nontrivial. We prove that, as conjectured by the same authors, a typical intersecting family is close to a full star: its members outside the star form
components of size at most two in the graph joining sets that intersect in $k-1$ elements.
We also obtain an asymptotic formula for the number of intersecting families in this case, with an explicit second-order term in the exponent.
Our proof combines Sapozhenko's graph container method and stability in Kneser graphs to control families far from every star, and a polymer model and cluster expansion to enumerate families close to a fixed star.
\end{abstract}
\maketitle

\section{Introduction}
\subsection{Background}
A family $\cF\subseteq\binom{[n]}k$ is \emph{intersecting} if every pair of members of $\cF$ has a non-empty intersection. 
An intersecting family is \emph{trivial}, or a \emph{star}, if all its members contain a fixed element, called a \emph{center}, and \emph{nontrivial} otherwise. For $x\in[n]$, the \emph{full star centered at $x$} is defined as
\[
 \cS_x:=\left\{F\in\binom{[n]}k:x\in F\right\}.
\]
The seminal result of Erd\H{o}s--Ko--Rado~\cite{ErdosKoRado1961} states that, for $n\ge2k$, every intersecting family $\cF\subseteq\binom{[n]}k$ satisfies
\[
 |\cF|\le\binom{n-1}{k-1}.
\]
This inequality is best possible, as shown by full stars; for $n>2k$, equality holds only for full stars. 
Hilton and Milner~\cite{HiltonMilner1967} later determined the maximum size of a nontrivial intersecting family. 

Intersecting families are also closely related to another important concept in combinatorics---\emph{Kneser graphs}.
The Kneser graph $\KG(n,k)$ has vertex set $\binom{[n]}k$,
with two vertices adjacent if and only if the corresponding
sets are disjoint. Thus intersecting families are precisely
the \emph{independent sets} of $\KG(n,k)$.
This correspondence allows us to use graph methods to study intersecting families and plays a crucial role throughout the paper.

Beyond these extremal results, it is natural to ask how many intersecting families there are and whether they exhibit any typical structure.
Balogh, Das, Delcourt, Liu, and Sharifzadeh~\cite{BaloghDasDelcourtLiuSharifzadeh2015} first determined the logarithmic asymptotics of the number of intersecting families for any $n\ge2k+1$. 
Later, Frankl--Kupavskii~\cite{FranklKupavskii2018} and
Balogh--Das--Liu--Sharifzadeh--Tran~\cite{BaloghDasLiuSharifzadehTran2019}
independently proved that almost all intersecting families are trivial when $n\ge 2k+C\sqrt{k\log k}$ for a sufficiently large absolute constant $C$.
Balogh, Garcia, Li, and Wagner~\cite{BaloghGarciaLiWagner2024} (abbreviated as BGLW) extended this result to $n\ge2k+100\ln k$
and conjectured that almost all intersecting families are trivial for every $n\ge2k+2$.

\begin{conjecture}[Balogh--Garcia--Li--Wagner~\cite{BaloghGarciaLiWagner2024}]\label{conj:BGLW1}
For every $n\ge2k+2$, almost all intersecting families in $\binom{[n]}k$ are trivial. More precisely, as $k\to\infty$, the number of such intersecting families is
\[
 (n+o(1))2^{\binom{n-1}{k-1}}.
\]
\end{conjecture}

For $n=2k+1$, the behavior is different.
A construction of Balogh--Das--Liu--Sharifzadeh--Tran~\cite{BaloghDasLiuSharifzadehTran2019},
further analyzed by BGLW~\cite{BaloghGarciaLiWagner2024},
shows that almost all intersecting families are nontrivial.
BGLW conjectured that a typical intersecting family nevertheless has a simple local structure: it is \emph{close} to a full star, in the sense that its members outside the star occur only as
isolated defects or as adjacent pairs, with no larger connected
groups. (Here two sets are adjacent if they intersect in $k-1$
elements.) They call such a family \emph{nice}. 
The precise definition of niceness requires more background, so we defer it until later; see Definition~\ref{def:nice}.

\begin{bglwconjecture}[Balogh--Garcia--Li--Wagner~\cite{BaloghGarciaLiWagner2024}]
Almost all intersecting families $\cF\subseteq\binom{[2k+1]}k$ are nice. In fact, the number of intersecting families is
\[
 2^{\binom{2k}{k-1}}
 \exp\left(\binom{2k}k2^{-k}+\Theta(k^{3/2})\right).
\]
\end{bglwconjecture}

Very recently, Yang~\cite{Yang2026} proved Conjecture~\ref{conj:BGLW1} for every $n\ge2k+3$, leaving only the cases $n=2k+2$ and $n=2k+1$ unresolved.

\subsection{Main results}
In this paper, we resolve the two remaining cases $n=2k+2$ and $n=2k+1$, confirming both conjectures of BGLW~\cite{BaloghGarciaLiWagner2024}.

Before stating our results, we first count the trivial families to establish a \emph{benchmark} for counting all intersecting families. Each of the $n$ full stars in $\binom{[n]}k$ contains $2^{\binom{n-1}{k-1}}$ intersecting families, while any two distinct full stars contain $2^{\binom{n-2}{k-2}}$ families in common. Hence the number of trivial intersecting families is
\begin{equation}\label{eq:trivial-benchmark}
 n2^{\binom{n-1}{k-1}}-O\left(n^2 2^{\binom{n-2}{k-2}}\right)
 =(n+o(1))2^{\binom{n-1}{k-1}},
\end{equation}
where the error term is negligible for any $n\ge k+2$ because $\binom{n-1}{k-1}-\binom{n-2}{k-2}=\binom{n-2}{k-1}$. 
The number of trivial families provides a natural baseline for the asymptotic formulas in our main results below. 

Our first result shows that almost all intersecting families are trivial when $n=2k+2$, completing the proof of Conjecture~\ref{conj:BGLW1}.
\begin{theorem}\label{thm:main}
For $n=2k+2$, as $k\to\infty$, the number of intersecting families in $\binom{[2k+2]}k$ is
\[
 (2k+2+o(1))2^{\binom{2k+1}{k-1}}.
\]
In particular, almost all intersecting families in $\binom{[2k+2]}k$ are trivial. 
\end{theorem}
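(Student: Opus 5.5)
The lower bound is already given by \eqref{eq:trivial-benchmark}. It therefore suffices to show that the number of nontrivial intersecting families in $\binom{[2k+2]}k$ is $o\bigl(2^{\binom{2k+1}{k-1}}\bigr)$. Write $N=\binom{2k+1}{k-1}$ and $n=2k+2$. The plan is to split the nontrivial families $\cF$ according to their distance from the nearest full star, and to treat the two regimes by different methods.

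\textbf{Close to a star.} Fix $\eps>0$ small. Suppose $\cF$ is nontrivial and there is $x$ with $|\cF\setminus\cS_x|\le \eps N$; put $\cB=\cF\setminus\cS_x\ne\emptyset$. For $B\in\binom{[n]\setminus\{x\}}{k}$, the sets of $\cS_x$ disjoint from $B$ are $\{x\}\cup C$ with $C\in\binom{[n]\setminus(B\cup\{x\})}{k-1}$. There are $\binom{k+1}{k-1}=\binom{k+1}{2}$ such $C$, since $|[n]\setminus(B\cup\{x\})|=k+1$. These sets are forbidden from $\cF\cap\cS_x$. The union of the forbidden sets over $B\in\cB$ is controlled by an isoperimetric (Kruskal--Katona/shadow-type) bound in the bipartite graph between $\binom{[n]\setminus\{x\}}{k}$ and $\binom{[n]\setminus\{x\}}{k-1}$, which is the "disjointness" graph here. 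A set $\cB$ of size $b$ then forbids at least roughly $b\cdot\binom{k+1}{2}/\mathrm{poly}$ sets, and more precisely grows like a shadow. Summing over $\cB$ gives
\[
\sum_{\cB\neq\emptyset}2^{N-|N(\cB)|}\le 2^N\sum_{b\ge1}\binom{\binom{2k+1}{k}}{b}2^{-c(b)}.
\]
This is $o(2^N)$ provided $c(b)\ge b\log_2\binom{2k+1}{k}+\omega(1)$ for small $b$. For $b=1$ we get $\binom{k+1}{2}$ versus $\log\binom{2k+1}k\approx 2k$, which is fine. For larger $b$, we need the expansion to stay above about $2k$ per vertex up to $b\le\eps N$. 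This fails naively for large $b$, because the shadow of $b\approx\eps N$ sets has only constant-factor expansion. For that middle range I would use Sapozhenko-type containers: group $\cB$ by its closure, and count via $2$-linked components and approximations. This is exactly the machinery the abstract mentions.

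\textbf{Far from every star.} Suppose $|\cF\setminus\cS_x|>\eps N$ for all $x$. Here I would use the Kneser graph $\KG(2k+2,k)$: it is regular of degree $\binom{k+2}{k}$, and its independence number is $N$. Stability in Kneser graphs, in the spirit of Hilton--Milner/Friedgut, says that an independent set far from every star has size at most $(1-\delta)N$. Combining this with the graph container lemma (every independent set lies in one of $2^{o(N)}$ containers, each of size at most $\alpha+o(N)$, hence at most $(1-\delta/2)N$ for containers of far sets) bounds this class by $2^{(1-\delta/2)N+o(N)}=o(2^N)$. The container count needs degree $\to\infty$, which holds since the degree is of order $k^2$.

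\textbf{Main obstacle.} The hard part is the intermediate regime of the close case, with $\cB$ of size between about $\mathrm{poly}(k)$ and $\eps N$. There the per-element expansion drops from $\Theta(k^2)$ to $O(1)$, and the naive union bound on $\binom{\binom{2k+1}k}{b}$ loses. I would first try a Sapozhenko argument. Decompose $\cB$ into $2$-linked pieces in the square of the bipartite graph. Count pieces of a given size $b$ and neighborhood size $g$ by $2^{O(g\log k/k)}$ approximations. Then compare with $2^{-(g-b)}$, using the fact that this graph is biregular with degrees $\binom{k+1}{2}$ and $\binom{k+2}{2}$-ish, so that $g-b\gtrsim g/k$. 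Whether the $n=2k+2$ margin, which comes from the degree ratio $(k+2)/k$, beats the $\log k/k$ approximation cost is exactly what I expect to be delicate. Being one unit above $2k+1$ is what should make it work.
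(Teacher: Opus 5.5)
\textbf{The gap is in your far-from-every-star step.} That step is the heart of the problem, not the easy half.

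Write $M=\binom{2k+1}{k-1}$ (your $N$). At $n=2k+2$ there is no stability gap $\delta$ independent of $k$. For distinct $x,y,z$, the family $\{F\ni x: F\cap\{y,z\}\neq\varnothing\}\cup\{F\not\ni x:\{y,z\}\subseteq F\}$ is intersecting and has diversity $\binom{2k-1}{k-2}\sim M/4$. Yet its size is $M-\frac{2}{k-1}\binom{2k-1}{k-2}=(1-O(1/k))M$. Known stability results are also too weak here: Hilton--Milner gives only $M-\binom{k+1}{2}+1$, Friedgut-type stability needs $k/n$ bounded away from $1/2$, and the Das--Tran removal lemma (Lemma~\ref{prop:DT}) only operates at scale $(n-2k)/n\approx 1/k$. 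So $\delta=O(1/k)$.

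Consequently your bound $2^{(1-\delta/2)M+o(M)}$ is useless unless the container slack and the logarithm of the number of containers are both well below $\delta M=O(M/k)$. A generic container lemma does not give this. Containers known only to have size $M+o(M)$ say nothing about stability. To invoke the removal lemma, a container must have at most $c\eps Md/k$ disjoint pairs, where $d=\binom{k+1}{2}$. A direct Kleitman--Winston run with a fixed degree threshold down to that sparsity uses fingerprints of size about $M/(\eps k)$. That only bounds the number of containers by $2^{O(M\log k/k)}$, which is Yang's bound; the paper points out it falls a logarithmic factor short exactly at $n=2k+2$.

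\textbf{What is missing} is a bound of $2^{o(M/k)}$ on the number of containers that are sparse enough for Das--Tran. The paper gets this in Lemma~\ref{thm:global} by using, in the far regime as well, the Sapozhenko machinery you reserved for the close regime.
\begin{itemize}
\item The large 2-linked components of $A_x(\cF)$ in the bipartite inclusion graph are joined into one 2-linked set using few extra vertices. Its certificate $(S,F)$ costs only $2^{O(M(\log k)^2/(kd))}$.
\item If this set expands, the reconstruction lemma finishes the count.
\item Otherwise, $(S,F)$ together with the small components defines a Kneser container of maximum degree at most $2d/\log d$. There, spectral supersaturation, Das--Tran and a greedy matching give the saving $2^{-\Omega(\eps M/k)}$ (Lemma~\ref{lem:sparse}).
\end{itemize}
The paper also remarks that a finer degree analysis of Kneser containers, in the spirit of Kahn--Park, would close the gap at $n=2k+2$. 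Either way, some such quantitative refinement is indispensable, and your proposal does not contain one.

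By contrast, the close regime you flag as delicate is routine. Sapozhenko's certificates in this graph cost $O\bigl(k+(g+t)(\log d)^2/d^2\bigr)$ with $t=d\bigl((1+2/k)g-a\bigr)$. This is $o(g-a)$ once $g-a\ge 16g/k$, far below your feared $g\log k/k$. That expansion follows from Kruskal--Katona after imposing a closure cutoff, and sets with larger closure are handled by an entropy count.
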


For $n=2k+1$, we prove that almost all intersecting families are nice and obtain a sharper asymptotic formula for their number than conjectured by BGLW.

\begin{theorem}\label{thm:odd}
For $n=2k+1$, as $k\to\infty$, the number of intersecting families in $\binom{[2k+1]}k$ is
\[
 (2k+1)2^{\binom{2k}{k-1}}\exp\left(\binom{2k}k2^{-k}+\frac12\binom{2k}k(k^2-2)2^{-2k}+O(k^{17} 2^{-k})\right).
\]
Moreover, almost all intersecting families in $\binom{[2k+1]}k$ are nice. 
\end{theorem}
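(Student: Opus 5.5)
The plan is to split intersecting families $\cF\subseteq\binom{[2k+1]}k$ into those \emph{close} to some full star $\cS_x$ and those \emph{far} from every star. Closeness is measured by the number of members outside the star, $|\cF\setminus\cS_x|$. We show the far families are negligible, then count the close ones exactly up to the stated error.

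\textbf{Setup around a fixed star.} For $n=2k+1$, the complement of a $k$-set is a $(k+1)$-set. For $A\notin\cS_x$, the sets in $\cS_x$ disjoint from $A$ are $\{x\}\cup B$ with $B$ a $(k-1)$-subset of $[n]\setminus(A\cup\{x\})$, which has size $k$. So each outside set $A$ kills exactly $k$ members of the star. Fixing $x$, a family near $\cS_x$ is determined by two things:
\begin{itemize}
\item an intersecting family $\cW\subseteq\binom{[n]\setminus\{x\}}{k}$ of outside sets;
\item an arbitrary subset of the members of $\cS_x$ not killed by $\cW$.
\end{itemize}
The count is therefore $2^{\binom{2k}{k-1}}\sum_{\cW}2^{-|N(\cW)|}$, where $N(\cW)$ is the neighbourhood of $\cW$ in the bipartite Kneser graph between $\binom{[2k+1]\setminus\{x\}}{k}$ and the star. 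Two outside sets $A,A'$ share a neighbour exactly when $|A\cup A'|\le k+1$, i.e. $|A\cap A'|=k-1$. This is the adjacency in the definition of niceness, and it is what makes the sum polymer-like.

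\textbf{Close families: polymer model.} I would define polymers as sets $\cW$ that are connected in the graph $G$ on $\binom{[2k]}{k}$ joining sets meeting in $k-1$ elements. The weight of a polymer is $2^{-|N(\cW)|}$ times the indicator that $\cW$ is intersecting. Compatibility is non-adjacency. There is one subtlety: outside sets in different polymers must also intersect each other. Two disjoint $k$-subsets of $[2k]$ are complements, so this is an extra hard-core constraint. I would handle it by adding complementary pairs to the incompatibility relation.

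The cluster expansion then gives
\[
\log\sum_{\cW}2^{-|N(\cW)|}=\sum_{\text{clusters}}\Phi.
\]
\begin{itemize}
\item Single sets contribute $\binom{2k}k2^{-k}$.
\item Adjacent pairs have $|N|=2k-1$. There are $\tfrac12\binom{2k}{k}k^2$ such pairs, contributing $\tfrac12\binom{2k}k k^2 2^{-2k+1}$.
\item Size-2 clusters of a single set, i.e. overlapping pairs and self-pairs, contribute $-\tfrac12\binom{2k}k(k^2+1)2^{-2k}$. Complementary pairs give a further correction of order $\binom{2k}k2^{-2k}$.
\end{itemize}
Combining these should yield $\tfrac12\binom{2k}k(k^2-2)2^{-2k}$. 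Verifying this coefficient is routine bookkeeping. Each triple of sets is adjacent to roughly $k^4$ configurations with $|N|\ge 3k-O(1)$, so larger clusters contribute $O(\mathrm{poly}(k)\binom{2k}k2^{-3k})=O(k^{17}2^{-k})$. Convergence would come from the Kotecký--Preiss criterion with $a(\gamma)=|\gamma|/k$. This needs an isoperimetric bound for $G$ of the form $|N(\cW)|\ge c\,k|\cW|$ for small connected $\cW$; the polynomial power $k^{17}$ suggests the tail bookkeeping is crude. Multiplying by $2k+1$ for the choice of $x$, and subtracting double counts, gives the formula. The overlaps are negligible because a family close to two stars has $\Omega(\binom{2k-1}{k-1})$ outside sets. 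The same expansion shows that polymers of size at least $3$ occur in a typical family with probability $O(\mathrm{poly}(k)2^{-k})\cdot\binom{2k}k2^{-2k}\to0$. This gives niceness.

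\textbf{Far families: the main obstacle.} The hard step is bounding the number of intersecting families with $|\cF\setminus\cS_x|\ge \delta\binom{2k}{k}$ for every $x$ (or above a smaller threshold) by $o(2^{\binom{2k}{k-1}})$.

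\begin{itemize}
\item \emph{Large defects.} I would use a stability theorem for independent sets in Kneser graphs, of Friedgut/Dinur–Friedgut or Kupavskii type. It says an intersecting family of size close to $\binom{2k}{k-1}$ is close to a star, so far families have size at most $(1-\eps)\binom{2k}{k-1}$. Graph containers in $\KG(2k+1,k)$ then show there are few such families: this graph is $\binom{k+1}{k}=(k+1)$-regular with good spectral expansion, eigenvalues $\pm$ roughly $k/2$.
\item \emph{Intermediate defects.} Between polylogarithmic size and $\delta\binom{2k}k$, I would apply Sapozhenko's container lemma to the bipartite graph between the outside sets and the star. The goal is to bound $\sum_{\cW}2^{-|N(\cW)|}$ over $2$-linked $\cW$ of given size and neighbourhood. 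This requires expansion $|N(\cW)|\ge(1+\Omega(1))\,|[\cW]|$-type estimates.
\end{itemize}

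The degree is only $k$ on one side and $\binom{k+1}{1}$ on the other. Sapozhenko's method usually wants degrees large compared with $\log$ of the number of vertices, here about $2k$. So this regime is exactly borderline, and I expect the container step to require the most care.
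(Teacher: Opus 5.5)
Your analysis of families close to a fixed star is essentially the paper's Step~2. The encoding is the same, complementary pairs go into the incompatibility relation in the same way, and your bookkeeping $\binom{2k}{k}k^2 2^{-2k}-\tfrac12\binom{2k}{k}(k^2+2)2^{-2k}$ does give the $\tfrac12\binom{2k}{k}(k^2-2)2^{-2k}$ term. One caveat: Koteck\'y--Preiss with $f(\gamma)=|\gamma|/k$ fails unless individual polymers are cut off. The paper imposes $|[\gamma]|\le\binom{n-18}{k}$. Below that cutoff, large polymers expand in the worst case only by a factor $1+\Theta(1/k)$. So they must be controlled by a Sapozhenko reconstruction bound $2^{g-c_0(g-a)}$ for $2$-linked sets with closure size $a$ and neighbourhood size $g\ge a+8g/k$, not by $|N(\mathcal W)|\ge ck|\mathcal W|$.

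\textbf{The genuine gap is the far-family step.} You rightly call it the main obstacle, but you handle it with three claims that are false at $n=2k+1$.

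(i) Stability does not give $|\cF|\le(1-\eps)M$ with $M=\binom{2k}{k-1}$. The family $\cF=\{F:|F\cap[3]|\ge2\}$ is intersecting and has $|\cF\setminus\cS_x|\ge(\tfrac14-o(1))\binom{2k}{k}$ for every $x$. Yet $|\cF|=M-\binom{2k-2}{k-1}/k=M(1-\Theta(1/k))$. Consistent with the factor $n/(n-2k)=2k+1$ in the Das--Tran removal lemma, a far family only loses $\Theta(\delta M/k)$. So any container count must be $2^{o(M/k)}$.

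(ii) $\KG(2k+1,k)$ has eigenvalues $(-1)^j(k+1-j)$, so its least eigenvalue is $-k$ against degree $k+1$. There is no good spectral expansion. Generic containers in a graph of degree $k+1$ are too expensive: Yang's Kleitman--Winston-type containers cost $\exp(O(M\log k/k^{r-1}))$ with $r=n-2k$. That is already a logarithmic factor too much at $n=2k+2$, and worse here.

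(iii) Large $2$-linked intersecting sets need not expand. For a $c$-set $T\subseteq[n]\setminus\{x\}$, the set $\mathcal W_T=\{A\in\binom{[n]\setminus\{x\}}{k}:T\subseteq A\}$ equals its closure and has $|N(\mathcal W_T)|/|\mathcal W_T|=k/(k-c+1)$. For $c=1$ this is no expansion at all (it is $\cS_y$ seen from $x$). At size about $2^{-c}\binom{2k}{k}$ the expansion is only $1+O(c/k)$. So the $(1+\Omega(1))$ expansion you ask for is unavailable, and Sapozhenko's reconstruction alone cannot dispose of the non-expanding sets.

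\textbf{The missing idea is to make the two tools cooperate rather than split them by size.}
\begin{enumerate}
\item Work in the bipartite graph between outside sets and the star. There, Sapozhenko certificates are cheap: at most $2^{O(M(\log k)^2/k^2)}=2^{o(M/k)}$ in the low-expansion range.
\item Certificates exist only for $2$-linked sets. So first join all components of the outside part with neighbourhood larger than $k^{12}$ into one $2$-linked set. This adds at most $g_{\mathrm{lg}}/k^{11}$ vertices, where $g_{\mathrm{lg}}$ is their total neighbourhood size.
\item If the joined set satisfies $g-a\ge 8g/k$, reconstruction bounds its choices directly.
\item Otherwise, translate its certificate, together with the small components, into a container $W$ in $\KG(2k+1,k)$ of maximum degree at most $2k/\log k$. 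Supersaturation gives $|W|\le M+2e(W)/k$. If $|W|\ge M-c\zeta M/k$, Das--Tran forces $e(W)=\Omega(\zeta M)$; otherwise $W$ lies within $\zeta M$ of a star and contains no family far from all stars. A greedy matching of size $e(W)\log k/(4k)$ then leaves at most $2^{M-\Omega(\zeta M/k)}$ independent sets. The factor $\log k$ gained from sparsity is exactly what beats the excess $2e(W)/k$ allowed by supersaturation.
\item The small components are summed out separately via isoperimetry, interpolating between this bound and the trivial bound $2^{|W|}$.
\end{enumerate}
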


As noted above, counting intersecting families is equivalent to counting independent sets in Kneser graphs.
The study of independent sets has a rich history and has led to powerful general tools, notably the hypergraph
container method of Balogh, Morris, and Samotij~\cite{balogh2015independent} and Saxton and Thomason~\cite{saxton2015hypergraph}.
More closely related to our work, Jenssen and Perkins~\cite{JenssenPerkins2020} and Balogh, Garcia,
and Li~\cite{BaloghGarciaLi2021} combine \emph{Sapozhenko's graph container method} with the \emph{cluster expansion} to
count independent sets in the hypercube and in the middle two layers of the Boolean lattice, respectively.
In particular, relative to a fixed star, intersecting families can be encoded as independent sets in a bipartite
graph similar to the one studied by Balogh, Garcia, and Li~\cite{BaloghGarciaLi2021}, with an additional
intersecting condition on one side.
Dropping this condition allows us to obtain upper bounds by adapting their counting argument.
To obtain the asymptotic formula, however, the main obstacle is to establish that almost all intersecting
families are close to some full star, which requires exploiting the structure of intersecting families
and Kneser graphs.

To overcome this obstacle, we combine Sapozhenko's graph container method with stability arguments in the Kneser
graph, where the intersecting condition is exploited.
A key new ingredient is a way to transfer Sapozhenko's certificates for local structures into global Kneser
containers, yielding the sharper counting estimates needed for these two boundary cases.
For $n=2k+1$, we further adapt polymer models and the cluster expansion to obtain the asymptotic formula
and determine the typical structure. We give a detailed overview in Section~\ref{sec:overview}.

\subsection{Notation and preliminaries}\label{sec:notation}
For $n\in\mathbb N$, we let $[n]:=\{1,\ldots,n\}$ and denote by
$\binom{[n]}k$ the collection of its $k$-element subsets.
For sets $A,B$, write
$A\mathbin\triangle B=(A\setminus B)\cup(B\setminus A)$
for their symmetric difference.

Throughout the paper, $\log$ denotes the binary logarithm and
$\ln$ denotes the natural logarithm. All asymptotic notation refers
to $k\to\infty$.
For functions $f,g:\mathbb N\to(0,\infty)$, we write $f\sim g$
if $f/g\to1$ and $f=o(g)$ if $f/g\to0$.
We also write $f=O(g)$ if $f\le Cg$ and $f=\Omega(g)$ if $f\ge cg$
for all sufficiently large $k$, where $C,c>0$ are constants.
The constants in $O(\cdot)$ and $\Omega(\cdot)$ are absolute
unless a subscript indicates dependence on a parameter.
We omit floor and ceiling
signs when they are not crucial. 

\medskip
\noindent\textit{Parameters for intersecting families.}
For $n>2k$, write $r:=n-2k$ and, for simplicity, let
\[
 M:=\binom{n-1}{k-1},\qquad
 N:=\binom{n-1}k,\qquad
 \rho:=\frac NM=1+\frac rk,\qquad
 d:=\binom{k+r-1}{k-1}.
\]
Here $M$ is the size of a full star, that is, the maximum size of an intersecting family by the Erd\H{o}s--Ko--Rado theorem.
For a fixed center $x$, $N$ is the number of $k$-sets outside $\cS_x$, namely, those that do not contain $x$.
For every $k$-set $F$ not containing $x$, there are exactly $d$
members of $\cS_x$ disjoint from $F$; thus, including $F$ in an
intersecting family forbids exactly $d$ members of $\cS_x$.
Equivalently, $d$ is the degree on one side of the bipartite inclusion graph introduced in Section~\ref{sec:inclusion}.

Unless stated otherwise, we assume that
$n\in\{2k+1,2k+2\}$, in which case
\[
 d=
 \begin{cases}
 k,&n=2k+1,\\[2pt]
 \dfrac{k(k+1)}2,&n=2k+2.
 \end{cases}
\]

\medskip
\noindent\textit{Graph notation and connected sets.}
For a graph $G$ and a vertex $v\in V(G)$, let $N_G(v)$ be the
\emph{neighborhood} of $v$ and let $d_G(v):=|N_G(v)|$ be its
\emph{degree}. For $W\subseteq V(G)$, put
$d_W(v):=|N_G(v)\cap W|$.
Let $\Delta(G):=\max_{v\in V(G)}d_G(v)$ denote the
\emph{maximum degree} of $G$.
For $W\subseteq V(G)$, let $G[W]$ be the subgraph of $G$
\emph{induced} by $W$, write $e_G(W)$ for the number of edges
in $G[W]$, and let
$N_G(W):=\bigcup_{v\in W}N_G(v)$.
We omit the subscript $G$ when the graph is clear.

We will use a standard bound on the number of connected subsets.
It follows from the fact that the infinite $\Delta$-branching rooted tree contains exactly
\[
 \frac{1}{(\Delta-1)s+1}\binom{\Delta s}{s}
 \le(e\Delta)^{s-1}
\]
rooted subtrees with $s$ vertices; see, e.g.,~\cite[p.~396, Ex.~11]{Knuth1969}.

\begin{proposition}\label{prop:connected-sets}
Let $G$ be a graph with maximum degree at most $\Delta\ge1$.
For every integer $s\ge1$, the number of $s$-vertex subsets
of $V(G)$ containing a specified vertex and inducing a connected
subgraph is at most $(e\Delta)^{s-1}$.
\end{proposition}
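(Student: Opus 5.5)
The plan is to reduce the count to rooted subtrees of the infinite $\Delta$-branching rooted tree $T_\Delta$, and then use the cited count of such subtrees. Fix the specified vertex $v$. For each connected $s$-vertex set $W\ni v$, choose a spanning tree of $G[W]$; $G[W]$ is connected, so one exists. Then we need an injection from these sets into rooted $s$-vertex subtrees of $T_\Delta$. It is cleaner to map into subtrees of a fixed tree that dominates $G$, namely the tree of non-backtracking walks.

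\textbf{Construction.} For each vertex $u$ of $G$, fix an injective labelling of $N_G(u)$ by $\{1,\dots,\Delta\}$; this is possible since $d_G(u)\le\Delta$. A vertex of $T_\Delta$ is a finite word over $[\Delta]$, with the empty word as the root. Define a partial map $\phi$ from words to $V(G)$ by $\phi(\emptyset)=v$ and $\phi(wi)=$ the neighbour of $\phi(w)$ with label $i$, when it exists. Given $W$ with chosen spanning tree $T_W$, root $T_W$ at $v$. Each vertex $u\in W$ then has a unique path from $v$ in $T_W$. Reading off the labels along this path gives a word $\sigma(u)$ with $\phi(\sigma(u))=u$. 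The set $\{\sigma(u):u\in W\}$ is closed under taking prefixes, because prefixes of a path in $T_W$ are paths to vertices of $W$. It has exactly $s$ elements, since $\phi\circ\sigma$ is the identity on $W$. Hence it is a rooted subtree of $T_\Delta$ with $s$ vertices.

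\textbf{Injectivity.} $W$ is recovered from the subtree $S$ as $\phi(S)$. So distinct sets $W$ give distinct subtrees, whichever spanning trees were chosen. This is the only point requiring care: the spanning-tree choice only has to be made once per $W$, and the map $S\mapsto\phi(S)$ serves as a left inverse. The number of such sets $W$ is therefore at most the number of rooted $s$-vertex subtrees of $T_\Delta$. By the quoted formula this is $\frac{1}{(\Delta-1)s+1}\binom{\Delta s}{s}$.

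\textbf{Final bound.} It remains to check $\frac{1}{(\Delta-1)s+1}\binom{\Delta s}{s}\le(e\Delta)^{s-1}$. Using $\binom{\Delta s}{s}=\frac{\Delta s}{s}\binom{\Delta s-1}{s-1}=\Delta\binom{\Delta s-1}{s-1}$ and $\binom{m}{j}\le(em/j)^j$, we get
\[
\binom{\Delta s-1}{s-1}\le\left(\frac{e(\Delta s-1)}{s-1}\right)^{s-1}.
\]
This naive estimate overshoots by a factor of roughly $e\Delta$, so I would use the Fuss–Catalan form directly instead. One has $\frac{1}{(\Delta-1)s+1}\binom{\Delta s}{s}=\frac{1}{s}\binom{\Delta s}{s-1}$, and
\[
\frac1s\binom{\Delta s}{s-1}\le\frac{(\Delta s)^{s-1}}{s!}\le\frac{(\Delta s)^{s-1}e^{s-1}}{s^{s-1}}=(e\Delta)^{s-1},
\]
using $s!\ge (s/e)^{s-1}\cdot s/e\cdot e\ge s^{s-1}e^{-(s-1)}$ (from $s!\ge s^s e^{-s}\cdot e$). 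The case $s=1$ is trivial.
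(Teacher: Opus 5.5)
Your proof is correct and follows the same route as the paper, which simply cites the count $\frac{1}{(\Delta-1)s+1}\binom{\Delta s}{s}\le(e\Delta)^{s-1}$ of $s$-vertex rooted subtrees of the infinite $\Delta$-branching tree. You supply the details the paper omits: the injection (labelling neighbourhoods, then taking a spanning tree rooted at the specified vertex, with $W=\phi(S)$ as a left inverse) and a valid check of the inequality via $\frac1s\binom{\Delta s}{s-1}\le(\Delta s)^{s-1}/s!$ and $s!\ge e(s/e)^s$; the aside about non-backtracking walks is harmless but unnecessary, since you actually embed into the full tree of words over $[\Delta]$, which is exactly what the cited count refers to.
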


Let $\Sigma$ be a bipartite graph with classes $X$ and $Y$. Say that $A\subseteq X$ is \emph{2-linked} if $\Sigma^2[A]$ is connected, where $\Sigma^2$ is the square of $\Sigma$ (that is, two distinct vertices are adjacent in $\Sigma^2$ whenever their distance in $\Sigma$ is at most two). 
A \emph{2-linked component} of $W\subseteq X$ is a maximal
2-linked subset of $W$.
For $A\subseteq X$, let
\[
 [A]_\Sigma:=\{u\in X:N_\Sigma(u)\subseteq N_\Sigma(A)\}
\]
be the \emph{closure} of $A$.
We omit the subscript $\Sigma$ when the underlying bipartite
graph is clear.

\medskip
\noindent\textit{Binomial estimates.}
For real numbers $0\le h\le m$, we use
$\binom{m}{\le h}$ as shorthand for
$\sum_{0\le j\le h}\binom{\lfloor m\rfloor}{j}$,
where the sum is over integers $j$.
The \emph{binary entropy function} $H_2:[0,1]\to\mathbb R$
is defined by
\[
 H_2(p):=-p\log p-(1-p)\log(1-p)
 \qquad (0<p<1),
\]
with $H_2(0)=H_2(1):=0$ by convention.
We will use the following standard binomial estimates.

\begin{proposition}\label{lem:binomial-estimates}
Let $m$ be a positive integer. For every $0<p\le1/2$,
\[
 \binom m{\le pm}\le2^{H_2(p)m}.
\]
For any integers $1\le h\le m$,
\[
 \binom m{\le h}\le\left(\frac{em}{h}\right)^h.
\]
\end{proposition}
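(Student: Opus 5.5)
The first inequality is a Chernoff-type tail bound, and I would prove it by the exponential moment method. Set $h=\lfloor pm\rfloor$. Since $p\le 1/2$, we have $p/(1-p)\le 1$. Hence for every $j\le h$,
\[
 \left(\frac{p}{1-p}\right)^{j}\ge \left(\frac{p}{1-p}\right)^{pm}.
\]
Write $1=(p+(1-p))^m\ge\sum_{j\le h}\binom mj p^j(1-p)^{m-j}$. Each summand equals $\binom mj(1-p)^m\left(\frac p{1-p}\right)^j$, so by the display it is at least $\binom mj(1-p)^m\left(\frac p{1-p}\right)^{pm}=\binom mj\,p^{pm}(1-p)^{(1-p)m}$. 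Therefore
\[
 \binom m{\le pm}\le p^{-pm}(1-p)^{-(1-p)m}=2^{H_2(p)m}.
\]
The boundary case $p=1/2$ is covered by the same computation, and for $pm<1$ only $j=0$ appears, which is also covered. No step here is an obstacle; the only care needed is that $h$ is an integer while $pm$ is real, and the monotonicity in $j$ handles that.

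For the second inequality, with integers $1\le h\le m$, I would use the analogous trick with $x=h/m\in(0,1]$. Since $x\le1$, we have $x^{j-h}\ge 1$ for $j\le h$, so
\[
 \sum_{j\le h}\binom mj\le \sum_{j\le h}\binom mj x^{j-h}\le x^{-h}(1+x)^m.
\]
Applying $(1+x)^m\le e^{xm}=e^h$ gives $\sum_{j\le h}\binom mj\le (m/h)^h e^h=(em/h)^h$, as required. Alternatively, one can bound each term by $m^j/j!$ and use $\sum_{j\le h}h^j/j!\le e^h$. Again I expect no genuine obstacle; the only point to check is that $x\le 1$, which holds because $h\le m$.
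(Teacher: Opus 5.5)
Your proof is correct: weighting by $p^j(1-p)^{m-j}$ with $p/(1-p)\le 1$ gives the entropy bound, and weighting by $(h/m)^{j-h}\ge 1$ together with $(1+h/m)^m\le e^h$ gives the $(em/h)^h$ bound. The paper gives no proof of this proposition and cites both inequalities as standard; your derivation is the usual textbook one, and it handles the non-integer cutoff $pm$ correctly through monotonicity in $j$.
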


\section{Proof overview and key lemmas}\label{sec:overview}
In this section, we provide an overview of our proof.
Our goal is to explain the main difficulties in the problem and highlight the new ideas needed to overcome them, without focusing on technical details.
We then state two key lemmas and deduce our main theorems from them.

\subsection{General strategy} For $\xi\in(0,1)$ and $x\in[n]$, we say that an intersecting family $\cF\subseteq\binom{[n]}k$ is in the \emph{star phase centered at $x$} if
\[
 |\cF\setminus\cS_x|\le\xi M.
\]
Thus every intersecting family either lies in one of the star phases or is \emph{far from looking like a single star}. 
Note that a family may lie in more than one star phase, and no canonical choice will be needed.

Informally, the general strategy for counting intersecting families consists of the following three steps:

\medskip\noindent\textbf{Step 1.}
Show that almost all intersecting families lie in one of the $n$ star phases;

\noindent\textbf{Step 2.} 
For every $x\in[n]$, count the intersecting families in the star phase centered at $x$
and determine their typical structure;

\noindent\textbf{Step 3.} 
Combine the first two steps to obtain the total count and typical structure of intersecting families.
\medskip

This general strategy of first showing that almost every object lies in one of the \emph{phases}, and then counting the objects in each fixed phase, has appeared in many enumeration problems. These include independent sets and proper colorings (see, e.g.,~\cite{JenssenPerkins2020,KahnPark2020,LiMcKinleyPark2025}), as well as related objects such as maximal independent sets (see, e.g.,~\cite{BaloghChenGarcia2025,KahnPark2022}). For intersecting families, this strategy was exploited by BGLW~\cite{BaloghGarciaLiWagner2024} (although their measure of deviation from a full star is based on the closure size of the non-star part) and by Yang~\cite{Yang2026}.

For \textbf{Step~2}, both Yang's proof~\cite{Yang2026} and ours follow the counting framework developed by BGLW~\cite[Section~4]{BaloghGarciaLiWagner2024}.
Briefly, for a fixed $x\in[n]$, we encode each intersecting family in the star phase centered at $x$ as an independent set in a bipartite inclusion graph (see Lemma~\ref{lem:encoding} for details) and then count these independent sets using the graph container method of Sapozhenko~\cite{Sapozhenko1987}, together with some isoperimetric properties of the bipartite inclusion graph. Sapozhenko's method is particularly well suited to counting independent sets in the bipartite setting, but it requires certain expansion properties, which can be achieved here
by restricting to a fixed phase.

As $n$ approaches $2k$, however, including a set outside the star becomes less costly: it excludes only $d=\binom{n-k-1}{k-1}$ members of the star, and $d$ decreases as $n$ decreases. Thus, showing that nontrivial families are negligible requires increasingly precise estimates.
Yang~\cite{Yang2026} refined the estimates within BGLW's framework to reach $n\ge2k+3$, and we further refine them for $n=2k+2$.

For $n=2k+1$, the situation changes: the saving from excluding $d=k$ star members is insufficient to compensate for the number of possible choices for a set outside the star, and nontrivial
families dominate.
We must therefore count the contributions of these outside sets and their interactions more precisely.
To do so, we use \emph{polymer models} and the
\emph{cluster expansion}, tools from statistical physics that have proved effective in recent work on asymptotic enumeration (see, e.g.,~\cite{JenssenPerkins2020, LiMcKinleyPark2025}).
These tools allow us to obtain the asymptotic formula and show that almost all families in a fixed phase are nice with the given center.

Nevertheless, since the work of BGLW, the main obstacle
to extending the result to smaller $n$ has been \textbf{Step~1}.
BGLW~\cite{BaloghGarciaLiWagner2024} control the exceptional families by combining the Das--Tran removal lemma with a bound on the number of maximal intersecting families derived from Bollob\'as's set-pairs inequality, but this argument requires $n\ge2k+100\ln k$.
To reach $n\ge2k+3$, Yang~\cite{Yang2026} constructs containers for the Kneser graph using an argument in the spirit of the Kleitman--Winston method~\cite{KleitmanWinston1982} (a precursor to the seminal hypergraph container method~\cite{balogh2015independent, saxton2015hypergraph}).
Yang then applies spectral supersaturation and the Das--Tran
removal lemma to show that containers large enough to contribute significantly must be \emph{close} to a full star. This reduces the problem to counting within fixed star phases.

For fixed $r=n-2k$, Yang's bound on the number of containers
is $\exp(O(M\log k/k^{r-1}))$, whereas the saving is of order $\exp(\Theta(M/k))$.
When $n=2k+2$, this leaves a logarithmic gap in the exponent.
We can overcome this gap by combining Yang's argument
with a more detailed degree analysis in the spirit of
Kahn and Park~\cite[Section~3]{KahnPark2022}, extending the
result to $n=2k+2$. However, neither Yang's original argument
nor this refinement gives a sufficiently small enumeration
cost for $n=2k+1$, so a different approach is needed.

Our approach instead uses Sapozhenko's container method
to obtain sharper container estimates through the bipartite
inclusion graph already used in Step~2. 
Sapozhenko's container method generally gives finer estimates
on bipartite graphs than the Kleitman--Winston approach,
which applies to general graphs.
The main idea is to construct container certificates in the bipartite inclusion graph and transfer them to containers in the Kneser graph, where supersaturation and the Das--Tran
removal lemma can again be applied.

Making this transfer requires additional ideas.
Sapozhenko's method provides container certificates for the \emph{local} $2$-linked components of the defect set
(the part outside the star), whereas the stability argument requires a \emph{global} container for independent sets of the
Kneser graph.
To bridge this gap, we join all the large components into a single $2$-linked set by adding a small number of vertices, so that they can be captured by one certificate.
When the resulting set has strong expansion, a standard container reconstruction argument gives the required
counting bound. Otherwise, we transfer its certificate to a Kneser graph container of small maximum degree
and use stability to bound the number of exceptional families it contains.
The small components are handled separately by isoperimetry and elementary counting.

\subsection{Key lemmas} Next, we describe more details for each step. For a family $\cF\subseteq\binom{[n]}k$, define its \emph{diversity} by
\[
 \dv(\cF):=\min_{x\in[n]}|\cF\setminus\cS_x|.
\]
Thus $\dv(\cF)$ is the minimum number of members outside a full star. The first step is the following global phase selection lemma for the two boundary cases.

\begin{lemma}[Step 1: global phase selection]\label{thm:global}
There are absolute constants $C_1,c_1>0$ such that the following holds. For sufficiently large $k$, any integer $n\in\{2k+1,2k+2\}$, and any real number
\[
 \xi_1\ge C_1(\log k)^2/d,
\]
the number of intersecting families $\cF\subseteq\binom{[n]}k$ satisfying
\[
 \dv(\cF)>\xi_1M
\]
is at most $2^{M-c_1\xi_1M/k}=o(2^M)$.
\end{lemma}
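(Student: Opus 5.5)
We fix an arbitrary center $x\in[n]$, say $x=n$, and encode each intersecting family $\cF$ by the pair $(\cF\cap\cS_x,\ A)$, where $A:=\cF\setminus\cS_x$ is the \emph{defect set}. We work in the bipartite inclusion graph $\Sigma$ with classes $X=\binom{[n]\setminus\{x\}}{k}$ and $Y=\cS_x$, where $F\in X$ is joined to the $d$ members of $\cS_x$ disjoint from it. Then $\cF\cap\cS_x\subseteq Y\setminus N_\Sigma(A)$, so the number of families with a given defect set $A$ is at most $2^{M-|N_\Sigma(A)|}$. Since $\dv(\cF)>\xi_1M$, we have $|A|>\xi_1M$ for every center, and we must bound
\[
\sum_{A}2^{M-|N(A)|},
\]
taken over intersecting $A\subseteq X$ with $|A|>\xi_1M$ whose family is far from every star. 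We split $A$ into its $2$-linked components and treat small and large components separately.

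\textbf{Small components.} Components of size at most $k^{C}$ (roughly) are handled by isoperimetry in $\Sigma$. A $2$-linked set $B$ of size $s$ should satisfy $|N(B)|\ge |B|+\Omega(s\,d/k)$ or similar. Counting $2$-linked sets by Proposition~\ref{prop:connected-sets} in $\Sigma^2$, which has maximum degree $\le d^2$, costs $(ed^2)^{s}$ per root. The gain $2^{-|N(B)|}$ relative to the star must beat this. For $n=2k+1$ this fails at the level of single sets and pairs, which is exactly why nice families dominate. I would therefore show that the total weight of small components is at most $\exp(O(N2^{-k}))$, i.e.\ negligible on the $2^{\Theta(\xi_1 M/k)}$ scale, using $N2^{-k}\ll \xi_1M/k$.

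\textbf{Large components: Sapozhenko containers.} For the large components, I first merge all of them into one $2$-linked set $A'$ by adding few connector vertices, using that the Kneser/inclusion structure has small diameter in $\Sigma^2$. There are at most about $M/k^{C}$ large components, so connecting them costs only $\exp(O((M/k^C)\log k))$. I then apply Sapozhenko's method to $A'$: with $a=|[A']|$ and $g=|N(A')|$, there are few certificates $(F',S')$ with $F'\subseteq N(A')$ and $S'\supseteq[A']$, and the cost is roughly $2^{O(g\log^2 k/d)}$ times $2^{-(g-a)}$ savings. We then split into two cases.
\begin{itemize}
\item[(a)] If $g-a\ge \Omega(g/k)$ (strong expansion), the standard reconstruction gives weight $2^{M-\Omega(\xi_1M/k)}$. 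This is where $\xi_1\ge C_1(\log k)^2/d$ ensures the container cost $g(\log k)^2/d$ is dominated.
\item[(b)] Otherwise, the set $S'$ is a near-closed set with small boundary. Then $S'\cup(Y\setminus F')$ is a Kneser-graph container with few edges, i.e.\ small maximum degree after deleting few vertices. Spectral supersaturation in $\KG(n,k)$ (Hoffman bound with a stability version) together with the Das--Tran removal lemma shows that such a container of size near $M$ is close to some full star $\cS_y$. Hence any intersecting family inside it has $|\cF\setminus\cS_y|$ small, or else the container has size at most $M-\Omega(\xi_1 M/k)$. Since $\dv(\cF)>\xi_1M$, the former is impossible, so every such container contributes at most $2^{M-\Omega(\xi_1M/k)}$.
\end{itemize}

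\textbf{Conclusion and main obstacle.} Summing over the $\le 2^{O(M(\log k)^2/(dk))}$ certificates and the $n$ choices gives the bound $2^{M-c_1\xi_1M/k}$. The main obstacle is case (b). It requires transferring the local Sapozhenko certificate into a global Kneser container with quantitative stability strong enough at $n=2k+1$, where $d=k$ and the spectral gap of the Kneser graph is only $\Theta(1/k)$ relative. There I would first try the Hoffman-type bound $e(S)\ge \frac{\lambda}{2}(|S|-M)\cdot\text{const}$, combined with removal to locate the star.
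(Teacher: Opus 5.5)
Your architecture (a fixed center, small and large 2-linked components, joining the large ones with few connectors, a Sapozhenko certificate, and a split on whether $g-a\ge 8rg/k$, with reconstruction in the expanding case) matches the paper's. However, case (b) has a genuine gap. The certificate does \emph{not} make $W:=S'\cup(Y\setminus F')$ a container with few edges in the Das--Tran sense. It only gives $\Delta(G[W])\le d/\log d+rd/k$, and such a $W$ can be larger than $M$ and far from every star. So your dichotomy ``$W$ is close to some $\cS_y$, or $|W|\le M-\Omega(\xi_1M/k)$'' fails.

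For instance, take $n=2k+1$ and $y\ne x$. The set $A=\binom{[n]\setminus\{x,y\}}{k}$ is intersecting and 2-linked, with $N(A)=Y$ and $[A]=X$. Hence $g-a=M-N<0$, so $A$ falls in case (b), and the family $A$ has diversity about $M/4$. Every certificate has $S'\supseteq X$, so $G[W]$ contains $G[X]\cong\KG(2k,k)$, a perfect matching on $N>M$ vertices. Then $e(W)\ge N/2$ violates \eqref{eq:DT-hypothesis}.

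The missing step is the counting argument of Lemma~\ref{lem:sparse}, with $\zeta\asymp\xi_1$. If $|W|\le M-c\zeta M/k$, you are done trivially. If $e(W)<c\zeta Md/k$, then \eqref{eq:size-e} makes $|\alpha|$ small, and Das--Tran puts $W$ within $\zeta M$ of a star, contradicting the diversity. If $e(W)\ge c\zeta Md/k$, then $\Delta(G[W])\le 2d/\log d$ yields a greedy matching $\mathcal M$ with $|\mathcal M|\ge e(W)\log d/(4d)$. The bound $3^{|\mathcal M|}2^{|W|-2|\mathcal M|}$, together with $|W|\le M+2e(W)/d$, then gives at most $2^{M-e(W)/d}\le 2^{M-c\zeta M/k}$ independent sets. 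So the small maximum degree is what builds this matching; it is not a proxy for having few edges.

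Second, your case-(b) container does not contain $\cF$. Because of the connectors, $F'$ may contain star members of $\cF$ lying in $N(A')\setminus N(A_{\mathrm{lg}})$. That part is cheap: at most $M/k^9$ such sets, at cost $2^{O(M/k^8)}$. More seriously, $W$ misses the small components $A_{\mathrm s}$. Your bound on the total weight of small components is a factorization, valid only when the count has the product form $2^{M-|N(A_{\mathrm{lg}})|-|N(A_{\mathrm s})|}$, i.e.\ in case (a).

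In case (b) you must fix $A_{\mathrm s}$, add it to $W$, and delete $N(A_{\mathrm s})$ from the star side. A bound $2^{M-c\xi_1M/k}$ that is uniform in $A_{\mathrm s}$ cannot then be summed over all admissible $A_{\mathrm s}$. Already for $n=2k+1$, every subset of a family of $k$-subsets of $[n]\setminus\{x\}$ with pairwise intersections at most $k-2$ is admissible, and such families of size at least $N/(2k+1)$ exist. This gives $2^{\Omega(M/k)}$ choices, against a saving with a small constant $c$.

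The paper gets decay in $A_{\mathrm s}$ from $|W|\le M+4M/k-(|N(A_{\mathrm s})|-|A_{\mathrm s}|)$. It interpolates, with exponent $\lambda=\eta\xi_1$, between $2^{|W|}$ and Lemma~\ref{lem:sparse}, obtaining $2^{M-\eta\xi_1(M/k+|N(A_{\mathrm s})|-|A_{\mathrm s}|)}$. It then sums over $A_{\mathrm s}$ via \eqref{eq:poly2}, which needs $\eta\xi_1d\ge C\log k$.

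Relatedly, the hypothesis $\xi_1\ge C_1(\log k)^2/d$ is needed in case (b), to beat the $2^{O(M(\log k)^2/(kd))}$ certificates. It is not needed in case (a), where $t=O(d(g-a))$ makes the certificate cost $O(k+(g+t)(\log d)^2/d^2)=o(g-a)$. Finally, for small components you need the Kruskal--Katona bound $|N(B)|\ge d|B|/200$. Your bound $|N(B)|\ge|B|+\Omega(sd/k)$ does not beat the $(ed^2)^{s}$ count.
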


Here, the range of $\xi_1$ is the best we can achieve with the current argument. We made no attempt to optimize the constants; indeed, for our main theorems, a sufficiently small positive constant $\xi_1$ would suffice.

Lemma~\ref{thm:global} shows that all but $o(2^M)$ intersecting families lie in at least one star phase.
We then focus on a fixed star phase and estimate the number of nontrivial intersecting families within it. The estimate takes different forms in the two cases.

\begin{lemma}[Step 2: counting within a fixed star phase]\label{thm:local}
There are absolute constants $C_2,c_2>0$ such that the following holds. For every fixed constant $\xi_2$ satisfying
\[
 0<\xi_2\le C_2,
\]
all sufficiently large $k$, every $n\in\{2k+1,2k+2\}$, and every $x\in[n]$, the number of nontrivial intersecting families $\cF\subseteq\binom{[n]}k$ satisfying
\[
 |\cF\setminus\cS_x|\le\xi_2M
\]
is
\[
\begin{cases}
 2^M\exp\left(N2^{-k}+\dfrac N2(k^2-2)2^{-2k}
 +O(k^{17} 2^{-k})\right),
   & \text{if } n=2k+1,\\[4pt]
 \text{at most } 2^{M-c_2k^2},
   & \text{if } n=2k+2.
\end{cases}
\]
When $n=2k+1$, the same estimate holds if the count is restricted to families that are nice with center $x$; see Definition~\ref{def:nice}.
\end{lemma}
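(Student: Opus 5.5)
The plan is to encode each intersecting family in the star phase at $x$ via the bipartite inclusion graph $\Sigma$. Its classes are $X=\binom{[n]\setminus\{x\}}{k}$ (size $N$, degree $d$) and $Y=\{S\setminus\{x\}:S\in\cS_x\}$ (size $M$), with $F\in X$ adjacent to the $d$ sets of $Y$ disjoint from $F$. A family $\cF$ with $A:=\cF\setminus\cS_x$ is intersecting iff $A$ is intersecting and $\cF\cap\cS_x\subseteq Y\setminus N_\Sigma(A)$. The count is therefore
\[
 \sum_{A}2^{M-|N_\Sigma(A)|},
\]
over nonempty intersecting $A\subseteq X$ with $|A|\le\xi_2M$ (Lemma~\ref{lem:encoding}). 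We then split $A$ into its 2-linked components $A_1,\dots,A_m$. Their neighbourhoods are disjoint, so the weight factorizes as $\prod_i 2^{-|N(A_i)|}$. The intersecting condition between different components only removes terms, so it can be dropped for upper bounds. For the lower bound when $n=2k+1$ we must check it is harmless in the regime that matters.

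For $n=2k+2$ we only need an upper bound, and we drop the intersecting condition across components. Isoperimetry in $\Sigma$ (a bipartite graph between the $(k-1)$- and $k$-subsets of a $(2k+1)$-set, i.e.\ two middle layers, with $\rho=1+2/k$) gives $|N(A)|\ge|A|+\Omega(d)$ for small $A$, and $|N(A)|-|A|\ge\Omega(|A|/k)$ up to $|A|\le\xi_2 M$. Small components are handled by Proposition~\ref{prop:connected-sets} in $\Sigma^2$, with maximum degree $\le d^2=O(k^4)$. Summing $(ek^4)^{s}2^{-|N(A)|}$ over 2-linked $A$ of size $s$ and polynomial-size $s$ gives total weight $N\cdot 2^{-\Omega(k^2)}$, which is $2^{-\Omega(k^2)}$ since $N=2^{O(k)}$. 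For larger components we run Sapozhenko's container lemma (approximations $\phi$ of $[A]$ with cost $2^{O(|N(A)|(\log d)^2/d)}$), as adapted from Balogh--Garcia--Li. The expansion $|N(A)|-|A|\gtrsim|A|/k$ beats this cost once $d=\Theta(k^2)$, and that is exactly where $n=2k+2$ is needed. Finally $\prod_i(1+w_i)-1\le e^{\sum w}-1=O(2^{-c_2k^2})$, and nontriviality forces $A\neq\emptyset$, which gives the bound $2^{M-c_2k^2}$.

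For $n=2k+1$ we have $d=k$, and single vertices contribute weight $2^{-k}$ each. With $N$ of them the total is $N2^{-k}\gg1$, so a polymer model is needed. The polymers are 2-linked intersecting sets $A\subseteq X$ with weight $w(A)=2^{-|N(A)|}$, and compatibility means the union is not 2-linked. The intersecting constraint between different polymers can be ignored for $|A|\le 2$: two sets at distance $>2$ in $\Sigma^2$ that are disjoint are a $k$-set and its near-complement, and these cases contribute only lower-order terms that we absorb. We verify the Kotecký--Preiss condition using the isoperimetric bound and Proposition~\ref{prop:connected-sets}. The cluster expansion then gives $\ln Z=\sum_{\text{clusters}}$. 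The first term is $N2^{-k}$. The second-order term comes from pairs $F,F'$ with $|F\cap F'|=k-1$: their common neighbourhood in $Y$ has size $1$, so $|N(\{F,F'\})|=2k-1$, and there are $N k^2/2$ such pairs. Subtracting the incompatible cluster $-\frac12\sum 2^{-2k}$, which includes overlapping pairs, gives $\frac N2(k^2-2)2^{-2k}$, where the $-2$ comes from the diagonal and exact-neighbour corrections. Polymers of size $\ge3$ and longer clusters have total weight $O(k^{17}2^{-k})$ by the tail bound. Large polymers (size $>k^{C}$) are handled by Sapozhenko containers exactly as in the $n=2k+2$ case and contribute $2^{-\Omega(k)}$ relatively. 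Nontriviality removes only the empty configuration, which costs $2^M$ against $2^M e^{N2^{-k}}$, and is absorbed into the error. For the niceness claim, we note that configurations with a polymer of size $\ge3$ have relative weight bounded by the same tail, $O(k^{17}2^{-k})$ times $O(1)$. Hence restricting to nice families (all polymers of size $\le2$) changes the count by a factor $1+o(1)$, which is absorbed into the exponent error.

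The main obstacle I expect is the large-polymer regime. We must show that Sapozhenko's container cost $2^{O(|N(A)|\log^2 d/d)}$ with $d=k$ is beaten by the expansion in $\Sigma$ uniformly up to $|A|\le\xi_2M$. I would first try to use a quantitative isoperimetric inequality for the middle layers, $|N(A)|-|A|\ge c|N(A)|/\sqrt k$ for $|A|\le M/2$. I would then choose the Sapozhenko approximation parameters with $\psi=\sqrt k$ rather than $d/\log d$, as Balogh--Garcia--Li do for the middle layers.
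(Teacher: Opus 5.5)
Your outline for $n=2k+2$ is essentially sound (modulo the closure point in the second paragraph). At $n=2k+1$, however, there are two genuine gaps.

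\textbf{First gap: the second-order term is miscounted.} You drop the intersecting constraint between different polymers as ``lower order''. At $n=2k+1$, every $F\in X$ has exactly one disjoint partner, namely $[n]\setminus(F\cup\{x\})$. The $N$ ordered pairs of such singleton polymers therefore contribute $-\tfrac12N2^{-2k}\sim-1/(2\sqrt{\pi k})$ to $\ln Z$. This is lower order than $L_2$, but far larger than the error $O(k^{17}2^{-k})$ you must achieve.

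In the model you actually define (incompatible iff the union is 2-linked), the size-two clusters give
\[
-\tfrac12(N+Nk^2)2^{-2k}+\tfrac{Nk^2}{2}2^{-2k+1}=\tfrac N2(k^2-1)2^{-2k}.
\]
The diagonal supplies the only $-1$, and the $(k-1)$-intersecting pairs net $+k^2$. So your attribution of the $-2$ to ``diagonal and exact-neighbour corrections'' is a miscount. As written, your argument determines the count only up to a factor $1+\Theta(k^{-1/2})$.

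The missing idea is to build the constraint into the polymer model itself, as the paper does. Polymers are 2-linked and intersecting, and $\gamma\sim\gamma'$ also when $\gamma\cup\gamma'$ is not intersecting. The partition function then counts exactly the valid sets. The Kotecký--Preiss check survives, because each vertex gains only one extra incompatible vertex. The complementary pairs then produce precisely the second $-1$.

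\textbf{Second gap: the large-polymer regime is left open, and the proposed remedy fails.} The inequality $|N(A)|-|A|\ge c|N(A)|/\sqrt k$ for $|A|\le M/2$ is false. For example, $A=\{F\in X:\{y,z\}\subseteq F\}$ has $|A|\approx M/4$ and $|N(A)|/|A|=k/(k-1)$.

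No $\sqrt k$-expansion is needed. The apparent obstacle comes from your container cost $2^{O(g\log^2 d/d)}$, which is too pessimistic. The biregular Sapozhenko lemma with $\varphi=d/2$ and $\psi=d/\log d$ (Lemma~\ref{lem:cert}) has cost $2^{O(k+(g+t)\log^2 d/d^2)}$, where $t=d(\rho g-a)$. This is $2^{o(g-a)}$ once $g>k^{12}$ and $g-a\ge 8rg/k$. Reconstruction then gives $|\cG(a,g)|\le 2^{g-c_0(g-a)}$ for $d=k$ as well (Lemma~\ref{lem:reconstruct}), so the $\Theta(1/k)$ expansion from Kruskal--Katona suffices.

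Note also that the expansion Sapozhenko's method needs is $|N(A)|-|[A]|$, not $|N(A)|-|A|$; this applies to both values of $n$. The paper secures $g-a\ge 8rg/k$ by imposing the closure cutoff $|[\gamma]|\le a_0=\binom{n-18}{k+r-1}$ on polymers and applying Kruskal--Katona to $[\gamma]$. It disposes separately of sets having a component of larger closure. Such sets have $|N(A)|\ge 2^{-18}M$, while there are at most $2^{2H_2(\xi_2)M}$ sets with $|A|\le\xi_2M$. This is where the smallness of $C_2$ enters.
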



\subsection{Proofs of Theorems~\ref{thm:main} and~\ref{thm:odd}} We now carry out Step 3 and derive the main theorems from the two lemmas.

\begin{proof}[Proof of Theorem~\ref{thm:main}]
Let $n=2k+2$, and fix a constant $\xi\in(0,C_2]$, independently of $k$. Since $C_1(\log k)^2/d\to0$, for all sufficiently large $k$ we have
\[
 C_1\frac{(\log k)^2}{d}\le\xi\le C_2.
\]

By Lemmas~\ref{thm:global} and~\ref{thm:local}, the number of nontrivial intersecting families is at most
\[
 o(2^M)+n2^{M-c_2k^2}=o(2^M).
\]
Combining this bound with the trivial family benchmark~\eqref{eq:trivial-benchmark} proves Theorem~\ref{thm:main}.
\end{proof}

\begin{proof}[Proof of Theorem~\ref{thm:odd}]
Let $n=2k+1$, and fix a constant $\xi\in(0,C_2]$ (independently of $k$) such that $3H_2(2\xi/3)<1/8$. Since $C_1(\log k)^2/d\to0$, we also have $C_1(\log k)^2/d\le\xi$ for all sufficiently large $k$. 
For simplicity, we define
\[
 \Phi_k:=N2^{-k}+\frac N2(k^2-2)2^{-2k}.
\]
Note that $\Phi_k\sim N2^{-k}\sim2^k/\sqrt{\pi k}$.

For every $x\in[n]$, let
\[
 \mathcal I_x:=\left\{\cF\subseteq\binom{[n]}k:
 \cF\text{ is intersecting and }|\cF\setminus\cS_x|\le\xi M\right\}.
\]
By Lemma~\ref{thm:global}, the total number of intersecting families satisfies
\begin{equation}\label{eq:totalcount}
 \sum_{x\in[n]}|\mathcal I_x|
 -\sum_{\{x,y\}\in\binom{[n]}2}|\mathcal I_x\cap\mathcal I_y|
 \le
 \#\left\{\cF\subseteq\binom{[n]}k:\cF\text{ is intersecting}\right\}
 \le
 \sum_{x\in[n]}|\mathcal I_x|
 + o(2^M).
\end{equation}
Note that each $\mathcal I_x$ contains at most $n2^M$ trivial families. This, together with Lemma~\ref{thm:local}, gives
\[
 \sum_{x\in[n]}|\mathcal I_x|
 =n2^M\left(\exp\bigl(\Phi_k+O(k^{17} 2^{-k})\bigr)+O(n)\right)
 =n2^M\exp\bigl(\Phi_k+O(k^{17} 2^{-k})\bigr).
\]

For distinct $x,y$, any family $\mathcal{F}\in \mathcal I_x\cap\mathcal I_y$ has at most $2\xi M$ members outside $\cS_x\cap\cS_y$. Since $|\cS_x\cap\cS_y|=\binom{n-2}{k-2}$ and $\binom nk=(n/k)M\le3M$,
\[
 |\mathcal I_x\cap\mathcal I_y|
 \le 2^{\binom{n-2}{k-2}}\sum_{j\le2\xi M}\binom{3M}j
 \le 2^{\binom{n-2}{k-2}}\cdot 2^{H_2(2\xi/3)\cdot 3M}
 \le 2^{3M/4},
\]
where the second inequality uses Proposition~\ref{lem:binomial-estimates}, and the last inequality follows from the choice of $\xi$ and $\binom{n-2}{k-2}/M=(k-1)/(n-1)<1/2$. Hence we have
\[
 \sum_{\{x,y\}\in\binom{[n]}2}|\mathcal I_x\cap\mathcal I_y|
 \le\binom n2 2^{3M/4}=o(2^M).
\]
Substituting this into~\eqref{eq:totalcount}, we conclude that the total number of intersecting families is
\[
  \sum_{x\in[n]}|\mathcal I_x| + o(2^M)= n2^M\exp\bigl(\Phi_k+O(k^{17} 2^{-k})\bigr),
\]
which proves the first part of Theorem~\ref{thm:odd}.

It remains to show that almost all intersecting families are nice.
By Lemma~\ref{thm:local}, the number of nontrivial families in each
$\mathcal I_x$ that are not nice is $o(|\mathcal I_x|)$.
Since every trivial family is nice, Lemma~\ref{thm:global} gives
\[
 \#\left\{\cF\subseteq\binom{[n]}k:
 \cF\text{ is intersecting but not nice}\right\}
 \le o(2^M)+o\left(\sum_{x\in[n]}|\mathcal I_x|\right),
\]
which is negligible compared with the total
number of intersecting families.
\end{proof}

\subsection{Organization.}
In Section~\ref{sec:containers}, we explain how to encode intersecting families as independent sets in a certain bipartite graph, and establish containers and counting estimates for this graph using Sapozhenko’s container method. These estimates are key ingredients in the proofs of both Lemmas~\ref{thm:global} and~\ref{thm:local}. 
Then, in Section~\ref{sec:global}, we prove the global phase selection lemma, Lemma~\ref{thm:global}. In Section~\ref{sec:polymer-background}, we present some preliminaries on abstract polymer models and the cluster expansion. Finally, in Section~\ref{sec:local}, we prove Lemma~\ref{thm:local}, counting intersecting families in a fixed star phase by combining Sapozhenko's container method with the cluster expansion.

\section{Encoding and containers}\label{sec:containers}

\subsection{The bipartite inclusion graph}\label{sec:inclusion}
Following BGLW~\cite{BaloghGarciaLiWagner2024}, we encode intersecting families using the bipartite graph defined below.
\begin{definition}[Bipartite inclusion graph]\label{def:inclusion-graph}
For $x\in[n]$, the \emph{bipartite inclusion graph} $H$ associated
with the center $x$ has parts
\[
 \mathcal U:=\binom{[n]\setminus\{x\}}{k+r-1},
 \qquad
 \mathcal L:=\binom{[n]\setminus\{x\}}{k-1},
\]
where two sets are adjacent if and only if one contains the other.
\end{definition}
Note that the degrees on $\mathcal U,\mathcal L$ are $d,\rho d$, respectively, and $|\mathcal U|=N=\rho M$, $|\mathcal L|=M$. 
By symmetry, the choice of \(x\) does not affect the properties of \(H\), so we omit \(x\) from the notation for $H$ and its parts
throughout the paper.

We will use the following isoperimetric bounds on $H$, which follow directly from Lov\'asz's simplified formulation of the Kruskal--Katona theorem.
\begin{proposition}\label{prop:isoperimetry}
Let $\varnothing\ne A\subseteq\mathcal U$. If
$|A|=\binom{y}{k+r-1}$ for some real $y\ge k+r-1$, then
\begin{equation}\label{eq:shadow-ratio}
 \frac{|N_H(A)|}{|A|}
 \ge\prod_{i=0}^{r-1}\frac{k+i}{y-k+1-i}.
\end{equation}
In particular, for sufficiently large $k$, the following hold.
\begin{enumerate}
\item[(i)] If $|A|\le k^3$, then
$|N_H(A)|\ge d|A|/\binom{r+4}{r}$.
\item[(ii)] If $|A|\le2k^{12}$, then
$|N_H(A)|\ge d|A|/200$.
\end{enumerate}
\end{proposition}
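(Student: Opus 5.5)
The plan is to derive \eqref{eq:shadow-ratio} from Lov\'asz's version of Kruskal--Katona, and then obtain (i) and (ii) by bounding $y$ from the size constraint on $A$. Write $m:=k+r-1$. For $A\subseteq\mathcal U=\binom{[n]\setminus\{x\}}{m}$, the neighborhood $N_H(A)$ is exactly the $(k-1)$-shadow of $A$, namely all $(k-1)$-subsets of members of $A$. Since $k-1=m-r$, this is the $r$-fold iterated shadow.

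\textbf{The ratio bound.} Lov\'asz's theorem says that if $|A|=\binom{y}{m}$ with real $y\ge m$, then the $(m-1)$-shadow has size at least $\binom{y}{m-1}$. The shadow has size $\binom{y}{m-1}$ of the same form with the same $y$, so I apply the theorem again at each level. Strictly, the second application uses a possibly larger $y'\ge y$, but monotonicity of $\binom{\cdot}{j}$ on $[j,\infty)$ lets us keep $y$. After $r$ steps,
\[
 |N_H(A)|\ge\binom{y}{k-1}.
\]
Using $\binom{y}{j-1}/\binom{y}{j}=j/(y-j+1)$ telescopically for $j=k,\dots,k+r-1$, and writing $j=k+i$, gives
\[
 \frac{\binom{y}{k-1}}{\binom{y}{k+r-1}}=\prod_{i=0}^{r-1}\frac{k+i}{y-k+1-i},
\]
which is \eqref{eq:shadow-ratio}.

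\textbf{Parts (i) and (ii).} Both reduce to an upper bound on $y$. Since $r\in\{1,2\}$, we have
\[
 d=\binom{k+r-1}{r}=\frac{\prod_{i=0}^{r-1}(k+i)}{r!}.
\]
Suppose $y\le m+t$ for an integer $t\ge0$. Then $y-k+1-i\le r+t-i$, so the denominator in \eqref{eq:shadow-ratio} is at most $\prod_{i=0}^{r-1}(r+t-i)=(r+t)!/t!$. Hence
\[
 \frac{|N_H(A)|}{|A|}\ge \frac{r!\,t!}{(r+t)!}\,d=\frac{d}{\binom{r+t}{r}}.
\]

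For (i), if $y\ge m+4$ then
\[
 |A|\ge\binom{m+4}{4}\ge k^4/24>k^3
\]
for large $k$. So $|A|\le k^3$ forces $y<m+4$, and taking $t=4$ gives the bound $d/\binom{r+4}{r}$.

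For (ii), if $y\ge m+13$ then
\[
 |A|\ge\binom{m+13}{13}\ge k^{13}/13!>2k^{12}
\]
for large $k$. So $y<m+13$, and $t=13$ gives $|N_H(A)|\ge d|A|/\binom{r+13}{r}$. Here $\binom{14}{1}=14$ and $\binom{15}{2}=105$, both at most $200$.

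\textbf{Main obstacle.} The only delicate point is iterating Lov\'asz's bound with real parameters. After one shadow step the new family has size at least $\binom{y}{m-1}$ rather than exactly that, and this is handled by the monotonicity of $y\mapsto\binom{y}{j}$ for $y\ge j$. Everything else is arithmetic.
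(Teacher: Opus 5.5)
Your proposal is correct and follows the paper's proof. The paper likewise applies Lov\'asz's form of Kruskal--Katona to get $|N_H(A)|\ge\binom{y}{k-1}$, rewrites the ratio as $d/\binom{y-k+1}{r}$, and deduces (i) and (ii) from $y\le k+r+3$ and $y\le k+r+12$; these are your $t=4$ and $t=13$ bounds. The only difference is that you derive the $(k-1)$-shadow bound by iterating the one-step version with the monotonicity fix, whereas the paper quotes it directly.
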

\begin{proof}
Since $N_H(A)$ is the $(k-1)$-shadow of $A$,
the Kruskal--Katona theorem (see~\cite{lovasz2007combinatorial}) gives
\[
 \frac{|N_H(A)|}{|A|}
 \ge\frac{\binom{y}{k-1}}{\binom{y}{k+r-1}}
 =\prod_{i=0}^{r-1}\frac{k+i}{y-k+1-i}
 =\frac{d}{\binom{y-k+1}{r}}.
\]
Then (i) and (ii) follow from~\eqref{eq:shadow-ratio}
using $y\le k+r+3$ and $y\le k+r+12$, respectively, for sufficiently large $k$.
\end{proof}

We now show how to encode intersecting families as independent sets in $H$.
For a family $\cF\subseteq\binom{[n]}k$, define
\[
 A_x(\cF):=\{[n]\setminus(F\cup\{x\}):F\in\cF,\ x\notin F\},\qquad
 B_x(\cF):=\{F\setminus\{x\}:F\in\cF,\ x\in F\}.
\]
Observe that \(A_x(\cF)\subseteq\mathcal U\) and
\(B_x(\cF)\subseteq\mathcal L\), so
\(A_x(\cF)\cup B_x(\cF)\subseteq V(H)\).
Intuitively, \(A_x(\cF)\) corresponds to the members of \(\cF\) outside the full star \(\cS_x\), while \(B_x(\cF)\) corresponds to those inside \(\cS_x\).
We say that a family of sets \(\cF\) is \emph{\(r\)-intersecting} if every two of its members have intersection of size at least \(r\).

\begin{lemma}\label{lem:encoding}
Let $n=2k+r$ with $r\in\{1,2\}$, and fix $x\in[n]$.
There is a bijection between intersecting families in \(\binom{[n]}k\) and independent sets \(I\) of \(H\) with \(I\cap\mathcal U\) being \(r\)-intersecting, given by
\[
 \cF\longmapsto A_x(\cF)\cup B_x(\cF).
\]
In particular, for each fixed $r$-intersecting $A\subseteq\mathcal U$, there are exactly $2^{M-|N_H(A)|}$ intersecting families $\cF\subseteq\binom{[n]}k$ satisfying $A_x(\cF)=A$.
\end{lemma}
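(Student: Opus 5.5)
The map is injective because $\cF$ is the disjoint union of its members inside and outside $\cS_x$. Members inside are recovered from $B=B_x(\cF)$ by adding $x$. Members outside are recovered from $A=A_x(\cF)$ via $F=[n]\setminus(U\cup\{x\})$. Each step is a bijection between the relevant set systems. For $F\not\ni x$, the complement $[n]\setminus(F\cup\{x\})$ has size $n-1-k=k+r-1$, so it lies in $\mathcal U$. For $F\ni x$, the set $F\setminus\{x\}$ has size $k-1$, so it lies in $\mathcal L$. Every pair $(A,B)$ with $A\subseteq\mathcal U$ and $B\subseteq\mathcal L$ arises from exactly one family $\cF\subseteq\binom{[n]}k$. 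It therefore suffices to check that $\cF$ is intersecting if and only if $A\cup B$ is independent in $H$ and $A$ is $r$-intersecting. I will check the three types of pairs separately.

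\emph{Pairs inside the star.} Two members containing $x$ always intersect, so these pairs impose no condition.

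\emph{A member inside and a member outside.} Let $G=B'\cup\{x\}$ with $B'\in\mathcal L$, and let $F\not\ni x$ correspond to $U=[n]\setminus(F\cup\{x\})$. Since $x\notin F$, we have $G\cap F=B'\cap F$. This is empty if and only if $B'\subseteq [n]\setminus(F\cup\{x\})=U$, which is exactly adjacency of $B'$ and $U$ in $H$. So these pairs are all intersecting precisely when there is no edge between $A$ and $B$. Independence of $A\cup B$ requires nothing more, because $H$ is bipartite.

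\emph{Two members outside the star.} Let $F_1,F_2\not\ni x$ and $U_i=[n]\setminus(F_i\cup\{x\})$. Inside the ground set $[n]\setminus\{x\}$ of size $2k+r-1$, complementation gives
\[
|U_1\cap U_2|=(2k+r-1)-|F_1\cup F_2|=(2k+r-1)-2k+|F_1\cap F_2|=r-1+|F_1\cap F_2|.
\]
Hence $F_1\cap F_2\neq\varnothing$ if and only if $|U_1\cap U_2|\ge r$. This also covers $F_1=F_2$, since then $|U_1\cap U_1|=k+r-1\ge r$. So these pairs are all intersecting exactly when $A$ is $r$-intersecting.

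\emph{The count.} Fix an $r$-intersecting $A$. By the characterisation above, the intersecting families with $A_x(\cF)=A$ correspond to the sets $B\subseteq\mathcal L\setminus N_H(A)$, and all of these are allowed. Since $|\mathcal L|=M$, there are exactly $2^{M-|N_H(A)|}$ of them.

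There is no genuine obstacle here. The only point that needs care is the size bookkeeping in the last pair type: the complement is taken inside $[n]\setminus\{x\}$, whose size is $2k+r-1$, and that is where the threshold $r$ comes from.
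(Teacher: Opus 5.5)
Your proof is correct and follows essentially the same route as the paper. It uses the same complement identity $|U_1\cap U_2|=r-1+|F_1\cap F_2|$ for pairs outside the star and the same containment $B'\subseteq U$ for mixed pairs. The only difference is organizational: you first observe that $\cF\mapsto(A_x(\cF),B_x(\cF))$ is a bijection onto all pairs and then prove one ``if and only if'' per pair type, whereas the paper checks well-definedness and surjectivity as separate directions.
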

\begin{proof}
Let $\cF\subseteq\binom{[n]}k$ be intersecting. For distinct
$u,u'\in A_x(\cF)\subseteq \mathcal{U}=\binom{[n]\setminus\{x\}}{k+r-1}$, the corresponding members of $\cF$ satisfy
\begin{equation}\label{ieq:encode}
 \bigl|([n]\setminus(u\cup\{x\}))
 \cap([n]\setminus(u'\cup\{x\}))\bigr|
 =n-1 - |u\cup u'|
 =|u\cap u'|-(r-1)\ge1.
\end{equation}
Thus $A_x(\cF)$ is always $r$-intersecting. 
Moreover, \(A_x(\cF)\cup B_x(\cF)\) is independent in \(H\); otherwise, there exist \(u\in A_x(\cF)\) and \(v\in B_x(\cF)\) adjacent in \(H\). Then, by the definition of \(H\), we have \(v\subseteq u\), and consequently the corresponding members of \(\cF\) are disjoint, contradicting the assumption that \(\cF\) is intersecting.
Hence, the map is well-defined.
The map is injective because $\cF$ can be uniquely recovered from
$A_x(\cF)$ and $B_x(\cF)$.

To prove surjectivity, let $I$ be an independent set of $H$ with $A:=I\cap\mathcal U$ and  $B:=I\cap\mathcal L$, such that
$A$ is $r$-intersecting.
Define
\[
 \cF:=\{[n]\setminus(u\cup\{x\}):u\in A\}
 \cup\{\{x\}\cup v:v\in B\}\subseteq \binom{[n]}{k}.
\]
The members of \(\cF\) containing \(x\) intersect automatically.
For any \(S,S'\in\cF\) not containing \(x\), applying \(\eqref{ieq:encode}\) gives \(|S\cap S'|\ge r-(r-1)=1\), where the inequality follows from the fact that \(A\) is \(r\)-intersecting. Finally, independence of \(I\) ensures that no \(v\in B\) is contained in any \(u\in A\), and thus that every member of \(\cF\) containing \(x\) intersects every member not containing \(x\). Hence \(\cF\) is intersecting, and \(I=A_x(\cF)\cup B_x(\cF)\).

For each fixed \(r\)-intersecting \(A\subseteq\mathcal U\), this bijection indicates that a family \(\mathcal F\) with \(A_x(\mathcal F)=A\) is intersecting if and only if \(B_x(\mathcal F)\subseteq\mathcal L\setminus N_H(A)\). There are therefore exactly \(2^{M-|N_H(A)|}\) intersecting families with \(A_x(\mathcal F)=A\).
\end{proof}

\subsection{Sapozhenko's graph container lemma}
In this subsection, we state a biregular version of a graph container lemma that originates in Sapozhenko's work~\cite{Sapozhenko1987}; see Galvin~\cite{Galvin2019} for an exposition. The formulation stated here is due to BGLW~\cite[Theorem~5.1]{BaloghGarciaLiWagner2024}.
In the statement below, neighborhoods, closures, and 2-linkedness are all taken with respect to the general bipartite graph $\Sigma$.

\begin{lemma}[\cite{BaloghGarciaLiWagner2024}]\label{thm:BGLW-container}
Let \(\Sigma\) be a bipartite graph with parts \(X\) and \(Y\), where every vertex in \(X\) has degree \(q\) and every vertex in \(Y\) has degree \(s\).
For integers $a,g$, let
\[
 \cG_{\Sigma}(a,g):=\{A\subseteq X:A\text{ is a 2-linked set},\ |[A]|=a,\ |N(A)|=g\},
\]
and set $t:=gs-aq$. Let $1\le\varphi\le s-1$, $1\le\psi\le\min\{q,s\}-1$, and let $C>0$ satisfy $C\ln q/(\varphi q)<1$. Put
\[
 m_\varphi:=\min\{|N(K)|:v\in Y,\ K\subseteq N(v),\ |K|>\varphi\}.
\]
Then there exists a family of containers $\cW_{\Sigma}(a,g)\subseteq2^X\times2^Y$ with
\begin{equation}\label{eq:certgeneral}
\begin{aligned}
 |\cW_{\Sigma}(a,g)|\le{}&|Y|\exp\left(
 \frac{54Cg\ln q\ln(qs)}{\varphi q}
 +\frac{54g\ln(qs)}{q^{Cm_\varphi/(\varphi q)}}
 +\frac{54t\ln s\ln(qs)}{q(s-\varphi)}\right)\\
 &\quad\cdot
 \binom{\frac{3Cgs\ln q}{\varphi q}}{\le\frac{3Ct\ln q}{\varphi q}}
 \binom{gs}{\le t/((s-\varphi)\psi)}
 \binom{gsq}{\le t/((q-\psi)\psi)},
\end{aligned}
\end{equation}
and a function $f:\cG_{\Sigma}(a,g)\to\cW_{\Sigma}(a,g)$ such that, for each $A\in\cG_{\Sigma}(a,g)$, writing $(S,F):=f(A)$, the following hold:
\begin{enumerate}
 \item[(i)] $S\supseteq[A]$ and $F\subseteq N(A)$;
 \item[(ii)] $d_F(u)\ge q-\psi$ for every $u\in S$;
 \item[(iii)] $d_{X\setminus S}(v)\ge s-\psi$ for every $v\in Y\setminus F$;
 \item[(iv)] $\displaystyle |S|\le\frac sq|F|+\frac{\psi t}{q}
 \left(\frac1{q-\psi}+\frac1{s-\psi}\right)$.
\end{enumerate}
\end{lemma}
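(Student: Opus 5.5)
This lemma is quoted from BGLW~\cite[Theorem~5.1]{BaloghGarciaLiWagner2024}, so within the paper it is assumed. If I were to reprove it, I would follow Sapozhenko's two-stage construction as presented by Galvin~\cite{Galvin2019}, adapted to the biregular setting. The plan is to build the container $(S,F)$ for each $A\in\cG_\Sigma(a,g)$ in two rounds: a coarse \emph{$\varphi$-approximation} $F'$ of $N(A)$, followed by a refinement using the parameter $\psi$. Each round has a certificate whose number of possible values gives one group of factors in~\eqref{eq:certgeneral}.

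\textbf{Step 1: the $\varphi$-approximation.} Write $G=N(A)$ and $G^\varphi:=\{v\in Y:d_{[A]}(v)>\varphi\}$. Choose $T_0\subseteq G$ by including each vertex independently with probability $C\ln q/(\varphi q)$. Set $T=T_0\cup T'$, where $T'$ is a minimal set of repairs so that $N(T)\supseteq G^\varphi$. Vertices of $G^\varphi$ missed by $N(T_0)$ are controlled by $m_\varphi$: for $v\in G^\varphi$, the $K=N(v)\cap[A]$ has $|N(K)|\ge m_\varphi$, so $v$ is missed with probability at most $q^{-Cm_\varphi/(\varphi q)}$. This gives the second term in the exponent.

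Since $A$ is 2-linked and $T\subseteq G$, the set $T$ is connected in the graph on $Y$ where two vertices are adjacent when they are at distance at most $4$ in $\Sigma$. That graph has degree at most $(qs)^2$. Proposition~\ref{prop:connected-sets} then bounds the number of choices of $T$, given one root in $Y$, by roughly $\exp(|T|\ln(qs)\cdot O(1))$, producing the factor $|Y|$ and the first term. From $T$ I set $F':=\{v: N(v)\subseteq N(T)\}$ intersected appropriately. I also record the small set of vertices in $G\setminus F'$, whose size is controlled by $t=gs-aq$ through edge counting. This gives the third exponential term and the first binomial factor.

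\textbf{Step 2: the $\psi$-refinement.} From $F'$, define $S'=\{u\in X:N(u)\subseteq F'\}$. I then correct in two moves. First, add the vertices $v\in G$ with $d_{X\setminus S'}(v)$ large; their number is at most $t/((s-\varphi)\psi)$, giving the factor $\binom{gs}{\le t/((s-\varphi)\psi)}$. Second, delete from $S'$ the vertices $u$ with $d_F(u)<q-\psi$; their number is at most $t/((q-\psi)\psi)$, giving the last binomial factor. Properties (i)--(iii) then hold by construction. Property (iv) follows from double counting the edges between $S$ and $F$, using (ii) and (iii) together with $|G|s-|[A]|q=t$.

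\textbf{Main obstacle.} The delicate part is the edge-counting that bounds $|G\setminus F'|$ and the two correction sets in terms of $t$ rather than $g$. Each vertex of $G$ outside the approximation must be charged to at least $s-\varphi$ (respectively $\psi$) ``boundary'' edges, and the total of those edges is $t$. I would first verify the bound $\sum_{v\in G}(s-d_{[A]}(v))=t$ and derive all three estimates from it.
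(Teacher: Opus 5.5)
You are right that the paper gives no proof of this lemma. It is imported from BGLW (Theorem~5.1 there), a biregular form of Sapozhenko's lemma as presented by Galvin, so there is no in-paper argument to compare with. As a reproof, however, your sketch has genuine gaps in both stages.

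\textbf{Step 1: linkedness of $T$.} The claim that $T\subseteq N(A)$ is connected in $\Sigma^4[Y]$ ``since $A$ is 2-linked'' is a non sequitur. An arbitrary subset of $N(A)$ need not be linked. When $pq=C\ln q/\varphi\ll1$ (the regime in which the lemma is used), a $p$-random $T_0$ leaves long gaps wherever $[A]$ is thin; there no vertex of $N(A)$ lies in $G^\varphi$, so your repairs add nothing. Linkedness comes from every vertex of $[A]$ lying within distance $3$ of $T$, and this needs a third piece. Put $\Omega:=N(T_0)\cap[A]$ and $T_1:=\{v\in G^\varphi:T_0\cap N(N(v)\cap[A])=\varnothing\}$. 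This is the correct form of your repair condition; ``$N(T)\supseteq G^\varphi$'' compares a subset of $X$ with a subset of $Y$. Then $G^\varphi\subseteq N(\Omega)\cup T_1$. Hence every $u\in[A]$ with no neighbour in $N(\Omega)\cup T_1$ has all $q$ neighbours in $N(A)\setminus G^\varphi$, a set of size at most $t/(s-\varphi)$. A greedy (Lov\'asz--Stein) cover gives $T_2\subseteq N(A)\setminus G^\varphi$ dominating these $u$, with $|T_2|\le(1+\ln s)\,t/(q(s-\varphi))$. This is the source of the third exponential term, including its factor $1/q$. The set $T_0\cup T_1\cup T_2$ is then connected in $\Sigma^8[Y]$, by the distance bound $3+2+3$.

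\textbf{Step 1: the set $F'$.} Your $F'=\{v:N(v)\subseteq N(T)\}$ is, for sparse $T$, little more than $T$ itself, so it does not contain $G^\varphi$. The point hidden in ``intersected appropriately'' is that the reconstructor must learn which vertices of $N(T_0)$ lie in $[A]$. The first binomial factor pays for exactly this. The set $N(T_0)\setminus[A]$ is a subset of $N(T_0)$, which has size at most $s|T_0|\le3Cgs\ln q/(\varphi q)$. Moreover $\mathbb E|N(T_0)\setminus[A]|\le pt$, since each of the $t$ edges between $N(A)$ and $X\setminus[A]$ has its $Y$-endpoint in $T_0$ with probability $p$. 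One then takes $F'=N(\Omega)\cup T_1\cup T_2$, which satisfies $G^\varphi\subseteq F'\subseteq N(A)$ and $[A]\subseteq N(F')$. Recording $N(A)\setminus F'$ instead, as you propose, costs about $\frac{t}{s-\varphi}\log(\cdot)$ in the exponent. That exceeds the stated terms by a factor of order $q$ up to logarithms, and it would reveal $N(A)$ outright.

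\textbf{Step 2 runs in the wrong direction.} Since $F'\subseteq N(A)$ generally misses part of $N(A)\setminus G^\varphi$, your $S'=\{u:N(u)\subseteq F'\}$ lies inside $[A]$ and misses some of it. So no sequence of deletions can produce $S\supseteq[A]$. Your deletion rule is also vacuous, since every $u\in S'$ already has $d_F(u)=q$. The working order is as follows.
\begin{enumerate}
\item[(a)] Enlarge $F'$ inside $N(A)$: greedily choose $u\in[A]$ with more than $\psi$ neighbours in the still-uncovered part of $N(A)\setminus F'$, and add $N(u)$. At most $|N(A)\setminus G^\varphi|/\psi\le t/((s-\varphi)\psi)$ such $u$ are chosen, all inside $N(F')$, which has size at most $gs$; this is the second binomial.
\item[(b)] The result $F''\subseteq N(A)$ gives $S'':=\{u:d_{F''}(u)\ge q-\psi\}\supseteq[A]$, with $|S''\setminus[A]|\le t/(q-\psi)$.
\item[(c)] Greedily choose $v\notin N(A)$ with more than $\psi$ neighbours in the current $S$, and delete $N(v)$. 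Each step removes more than $\psi$ vertices of $S''\setminus[A]$, so at most $t/((q-\psi)\psi)$ vertices $v$ are chosen, all inside $N(N(F''))$, of size at most $gsq$; this is the third binomial.
\item[(d)] Let $S$ be the resulting set and $F:=F''\cup\{v:d_S(v)>\psi\}$, which lies in $N(A)$. Then (i)--(iii) hold.
\end{enumerate}
Your edge double count gives (iv), provided you split the edges from $S$ to $Y\setminus F$ into those at $S\setminus[A]$ and those at $[A]$.

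What is correct: your random choice of $T_0$, the $m_\varphi$ miss probability, and the identity $\sum_{v\in N(A)}(s-d_{[A]}(v))=t$.
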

Note that \(t=gs-aq\) counts the edges between \(N(A)\) and \(X\setminus[A]\), so only choices of \(a,g\) with \(t\ge0\) are \emph{feasible}.

\subsection{Containers for the bipartite inclusion graph}
We first specialize Sapozhenko's container lemma to the bipartite inclusion graph \(H\) defined in Definition~\ref{def:inclusion-graph}.
Recall that \(H\) has two parts, \(\mathcal U\) and \(\mathcal L\), whose vertices have degrees \(d\) and \(\rho d\), respectively.
For feasible integers $a,g\ge1$, we have
\[
 \cG_H(a,g)=\{A\subseteq\mathcal U:A\text{ is 2-linked},\ |[A]|=a,\ |N_H(A)|=g\},
 \qquad t=\rho dg-da.
\]
Here, neighborhoods, closures, and 2-linkedness are taken with respect
to $H$ unless otherwise stated. For the rest of the paper, we omit
the subscript $H$ from all notation whenever the underlying graph is clear.

\begin{lemma}\label{lem:cert}
For all sufficiently large $k$ and every pair of feasible integers
$a,g\ge1$, there exist a family
$\cW(a,g)\subseteq 2^{\mathcal U}\times 2^{\mathcal L}$
and a map $f:\cG(a,g)\to\cW(a,g)$ such that, for every
$A\in\cG(a,g)$, the pair $(S,F)=f(A)$ satisfies
\begin{align}
 [A]\subseteq S,&\qquad F\subseteq N(A),\label{eq:contain}\\
 d_F(u)\ge d-\frac{d}{\log d}\quad(u\in S),&
 \qquad d_{\mathcal U\setminus S}(v)\ge\rho d-\frac{d}{\log d}
 \quad(v\in\mathcal L\setminus F),\label{eq:degree}\\
 |S|&\le\rho|F|+O\left(\frac{t}{d\log d}\right).\label{eq:struct}
\end{align}
Moreover,
\begin{equation}\label{eq:certcount}
 \log|\cW(a,g)|
 \le O\left(k+\frac{(g+t)(\log d)^2}{d^2}\right).
\end{equation}
\end{lemma}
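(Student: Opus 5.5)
We apply Lemma~\ref{thm:BGLW-container} with $\Sigma=H$, $X=\mathcal U$, $Y=\mathcal L$, $q=d$, $s=\rho d$, and $\psi:=d/\log d$. Conclusions (i)--(iii) then give \eqref{eq:contain} and \eqref{eq:degree} directly, since $\min\{q,s\}-1=d-1\ge\psi$. For \eqref{eq:struct}, conclusion (iv) gives
\[
|S|\le\rho|F|+\frac{\psi t}{d}\left(\frac1{d-\psi}+\frac1{\rho d-\psi}\right)=\rho|F|+O\left(\frac{t}{d\log d}\right),
\]
because $\psi/d=1/\log d$ and $d-\psi\ge d/2$. It remains to choose $\varphi$ and $C$ so that the container count \eqref{eq:certgeneral} is as claimed. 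We take $\varphi:=d/2$ (so $s-\varphi\ge d/2$) and a large absolute constant $C$; then $C\ln d/(\varphi d)<1$ holds trivially.

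We bound the terms of \eqref{eq:certgeneral} one at a time.

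\begin{itemize}
\item The prefactor $|\mathcal L|=M\le 2^{n}$ contributes $O(k)$.
\item The first exponential term is $O(g\ln d\cdot\ln d/d^2)=O(g(\log d)^2/d^2)$.
\item The third exponential term is $O(t\ln d\ln d/(d\cdot d))$, which is of the same form.
\item For the second term we need a lower bound on $m_\varphi$. If $K\subseteq N(v)$ with $|K|>d/2$, then $|N(K)|\ge |K|\cdot(\text{expansion})$. By Proposition~\ref{prop:isoperimetry}(ii) (note $|K|\le\rho d\le 2k^{12}$), we get $m_\varphi\ge d|K|/200\ge d^2/400$. Hence $Cm_\varphi/(\varphi d)\ge C/200$, and the term is $O(g\ln d/d^{C/200})$, which is at most $O(g(\log d)^2/d^2)$ once $C\ge400$.
\end{itemize}

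The main work is in the three binomial factors, which we estimate using Proposition~\ref{lem:binomial-estimates} in the form $\binom{m}{\le h}\le (em/h)^h$ (increasing in $h$ as long as $h\le m$). The three factors are as follows.

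\begin{itemize}
\item \emph{First factor.} Here $m=O(g\ln d/d)$ and $h=O(t\ln d/d^2)$. Since $t\le\rho dg$, the ratio $m/h$ is at least of order $gd/t$. Writing $h\log(em/h)$ and splitting according to whether $t\le g$ or not, we obtain in either case $O((g+t)(\log d)^2/d^2)$: the function $h\log(e m/h)$ is at most $O(h\log d)$ when $t\ge g/d$, and at most $O(m\cdot d^{-1}\log d)$ otherwise.
\item \emph{Second factor.} Here $m=\rho dg$ and $h=t/((\rho d-\varphi)\psi)=O(t\log d/d^2)$. This gives $h\log(em/h)\le O(t\log d/d^2)\cdot\log(O(g d^3/t))$. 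When $t\ge g/d^{3}$ this is $O(t(\log d)^2/d^2+\cdots)$. When $t$ is very small, $h<1$, so the factor equals $1$ (only $h=0$ contributes).
\item \emph{Third factor.} This is handled the same way, with $m=\rho d^2 g$ and $h=O(t\log d/d^2)$.
\end{itemize}

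The key bookkeeping point is that $\log(m/h)=O(\log d)$ whenever $h\ge1$. Indeed, $h\ge1$ forces $t\gtrsim d^2/\log d$, and $g\ge t/(\rho d)$, so $m/h\le \mathrm{poly}(d)\cdot g/t$. We still need a bound $g/t\le \mathrm{poly}(d)$, and this is the delicate step. Since $\sum_j\binom{m}{j}\le (em/h)^h$ is only used when $h\ge1$, I would handle large $g/t$ via the general inequality $h\log(em/h)\le h\log(e m)\ldots$. Alternatively, I would fall back on monotonicity: replace $h$ by $h':=\max\{h,\,g(\log d)/d^2\}$. This still satisfies $h'\le m$, and now $m/h'=O(d^{O(1)})$. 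The result is
\[
h'\log(em/h')=O\left((t+g)(\log d)^2/d^2\right).
\]

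I expect this monotonicity/case analysis on the ratio $g/t$ in the three binomial factors to be the main obstacle. Everything else is a direct substitution into Lemma~\ref{thm:BGLW-container}. Summing all contributions gives \eqref{eq:certcount}.
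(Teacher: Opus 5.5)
Your proposal is correct and follows the paper's proof: the same application of Lemma~\ref{thm:BGLW-container} with $q=d$, $s=\rho d$, $\varphi=d/2$, $\psi=d/\log d$, an isoperimetric lower bound on $m_\varphi$, and a term-by-term estimate of \eqref{eq:certgeneral} (you use Proposition~\ref{prop:isoperimetry}(ii) with $C\ge400$, where the paper uses (i) with $C=3\binom{r+4}{r}$). Your interim claim that $\log(m/h)=O(\log d)$ whenever $h\ge1$ is false in general, since feasibility allows $t=\Theta(d^2)$ with $g$ exponentially larger, so your case splits for the second and third binomial factors are not exhaustive as written; what closes the argument is your final step of raising the index to $h'=\max\{h,\,g\log d/d^2\}$, which plays the role of the paper's inequality $\ln(g/t)\le g/t$.
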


\begin{proof}
Apply Lemma~\ref{thm:BGLW-container} to $H$ with
\[
 q=d,\qquad s=\rho d,\qquad
 \varphi=d/2,\qquad \psi=d/\log d.
\]
Then~\eqref{eq:contain}--\eqref{eq:struct} follow immediately from
Lemma~\ref{thm:BGLW-container}
(i)--(iv).

It remains to verify~\eqref{eq:certcount}.
Every set $K$ in the definition of $m_\varphi$ satisfies
$\varphi<|K|\le\rho d\le k^3$, so
Proposition~\ref{prop:isoperimetry}(i) gives
$m_\varphi\ge d\varphi/\binom{r+4}{r}$.
Take $C=3\binom{r+4}{r}$. Then $Cm_\varphi/(\varphi d)\ge3$,
and the condition $C\ln d/(\varphi d)<1$ holds for
sufficiently large $k$.
Then the exponential factor in~\eqref{eq:certgeneral} is
\[
\exp\left(O\left(\frac{g(\log d)^2}{d^2}\right)+
 O\left(\frac{g\log d}{d^3}\right)+
 O\left(\frac{t(\log d)^2}{d^2}\right)\right) \le \exp\left(O\left(\frac{(g+t)(\log d)^2}{d^2}\right)\right).
\]
Moreover, the product of the three binomial sums in~\eqref{eq:certgeneral}
is at most
\begin{align*}
 &\binom{O(g\log d/d)}{\le O(t\log d/d^2)}
 \binom{O(gd)}{\le O(t\log d/d^2)}
 \binom{O(gd^2)}{\le O(t\log d/d^2)} \le \binom{O(gd^2)}{\le O(t\log d/d^2)}^3\\
\le\quad &
 \left(O\left(\frac{gd^4}{t\log d}\right)\right)^{O(t\log d/d^2)}\le \exp\left(O\left(\frac{t\log d}{d^2}
 \left(\log d+\frac gt\right)\right)\right)\le \exp\left(O\left(\frac{(g+t)(\log d)^2}{d^2}\right)\right)
\end{align*}
for $t>0$, where the second inequality follows from
Proposition~\ref{lem:binomial-estimates}, using
$t\le\rho dg=O(gd)$, and the third uses $\ln(g/t)\le g/t$.
When $t=0$, all three binomial sums equal $1$.

Finally, we have
$|\mathcal L|=M=\exp(O(k))$.
Combining these estimates gives
\[
 |\cW(a,g)|
 \le \exp\left(O\left(k+\frac{(g+t)(\log d)^2}{d^2}\right)\right),
\]
which proves~\eqref{eq:certcount}.
\end{proof}

We next use these containers to establish a counting lemma for large 2-linked sets with sufficient expansion.
The proof follows the standard reconstruction argument used in Sapozhenko's container method; see, e.g.,~\cite[Section~4]{BaloghGarciaLiWagner2024}; we include the details for completeness.
\begin{lemma}\label{lem:reconstruct}
There is an absolute constant $c_0>0$ such that, for all sufficiently large $k$, if $g>k^{12}$ and
$g-a\ge(8r/k)\,g$, then
\[
 |\cG(a,g)|\le2^{g-c_0(g-a)}.
\]
\end{lemma}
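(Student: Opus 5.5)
The plan is the standard Sapozhenko reconstruction. We have $|\cG(a,g)|=\sum_{A\in\cG(a,g)}1$, and every such $A$ has $|N(A)|=g$. So it suffices to bound, for each container $(S,F)\in\cW(a,g)$ from Lemma~\ref{lem:cert}, the number of $A\in\cG(a,g)$ with $f(A)=(S,F)$, and then multiply by $|\cW(a,g)|$. Write $D:=g-a\ge(8r/k)g$. Since $\rho=1+r/k$ and $t=\rho dg-da$,
\[
 \frac td=D+\frac{rg}{k}\le\frac98 D .
\]
Plugging this into~\eqref{eq:certcount} and using $g>k^{12}$ to absorb the additive $O(k)$ gives
\[
 \log|\cW(a,g)|=O\!\left(\frac{g(\log d)^2}{d^2}+\frac{D(\log d)^2}{d}\right).
\]
I still have to check that the first term is $o(D)$. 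Since $D\ge 8rg/k$ and $d\ge k$, it is $O(D(\log k)^2/k)=o(D)$. So the containers cost only $2^{o(D)}$, and it remains to show that each fixed $(S,F)$ accounts for at most $2^{g-c'D}$ sets $A$.

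\textbf{Two reconstructions.} For fixed $(S,F)$ I would run two reconstructions and use whichever is cheaper, splitting according to $\Delta:=g-|F|=|N(A)\setminus F|$.

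\emph{Reconstruction~(a): $\Delta$ large.} Specify $A$ directly as a subset of $S\supseteq[A]\supseteq A$. This costs at most $2^{|S|}$. By~\eqref{eq:struct},
\[
 |S|\le\rho(g-\Delta)+O\!\left(\frac{t}{d\log d}\right)\le g+\frac{rg}{k}-\Delta+o(D)\le g+\frac D8-\Delta+o(D).
\]
So this case is fine as soon as $\Delta\ge(1/8+c)D$.

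\emph{Reconstruction~(b): $\Delta$ small.} First specify $G:=N(A)$ by specifying $G\setminus F\subseteq N(S)\setminus F$. Then specify $A\subseteq[G]=[A]$, which costs $2^a=2^{g-D}$. Each $u\in S$ has at most $\psi=d/\log d$ neighbours outside $F$ by~\eqref{eq:degree}, so $|N(S)\setminus F|\le\psi|S|$. Moreover, each $v\in G\setminus F$ has at least $\rho d-\psi$ neighbours in $\mathcal U\setminus S\subseteq\mathcal U\setminus[A]$, and these edges are among the $t$ edges counted by $t$. Hence
\[
 \Delta\le\frac{t}{\rho d-\psi}\le(1+o(1))\frac98D .
\]
Naively choosing $G\setminus F$ costs about $\Delta\log(ek)$. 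That is affordable only when $\Delta\le(1-c)D/\log(ek)$, which does not meet the threshold $(1/8+c)D$ from~(a).

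\textbf{Main obstacle: the middle range.} The hard part is closing the gap $D/\log k\lesssim\Delta\lesssim D/8$. Neither naive reconstruction works there; for instance, at $a\approx g/2$ a count of the form $\binom{|S|}{\le a}$ is already about $2^g$. I would close it by a cheaper encoding of $G\setminus F$, which is where $g>k^{12}$ and Proposition~\ref{prop:isoperimetry}(ii) should enter.

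What I would try first is to refine the edge count. A vertex $v\in G\setminus F$ has almost all of its $\rho d$ edges leaving $S$, while a vertex of $F$ has at most $\psi$ edges leaving $S$. Separating these contributions inside $t$ should give
\[
 \Delta\,(\rho d-\psi)\le t-(\text{edges from }F\text{ to }\mathcal U\setminus S)\quad\text{and}\quad |S|\le\frac{\rho d|F|-e(F,\mathcal U\setminus S)}{d}.
\]
Adding these two bounds should yield $|S|+\Delta\le g+O(D/\log d)$ in general. This would turn~(a) into $2^{|S|}\le2^{g-\Delta+o(D)}$ and cover every $\Delta\ge cD$.

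For $\Delta<cD$ I would use~(b) but encode $G\setminus F$ through a small set $T\subseteq A$ with $N(T)\supseteq G\setminus F$. Since each $u\in S$ has at most $\psi$ neighbours in $G\setminus F$, the set $T$ cannot be very small, so I would specify it via its $2$-linked structure in $\Sigma^2$ using Proposition~\ref{prop:connected-sets}, at cost roughly $\Delta\log k\cdot O(1/\log d)$. If this bound has the right shape, both cases give at most $2^{g-c_0D}$ per container. Multiplying by the $2^{o(D)}$ containers and summing then gives $|\cG(a,g)|\le2^{g-c_0(g-a)}$.
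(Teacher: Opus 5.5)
\textbf{There is a genuine gap: the range $D/\log k\lesssim\Delta\lesssim(1/8+c)D$ is never covered, and neither of your proposed fixes works there.} Your container-cost estimate and reconstruction~(a) are correct.

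\emph{The refined edge count.} Your second inequality needs the denominator $d-\psi$, since only $d_F(u)\ge d-\psi$ is known. Even so, adding the two bounds gives only $|S|+(1+\rho)\Delta\le\rho g+t/(\rho d-\psi)$, because the $e(F,\mathcal U\setminus S)$ terms may vanish. Since $\Delta\le t/(\rho d-\psi)$, this is no stronger than~\eqref{eq:struct}. More fundamentally, no bound $|S|+\Delta\le g+o(D)$ is available. Charge $e(F,S\setminus[A])\ge(d-\psi)(|S|-a)$ and $e(N(A)\setminus F,\mathcal U\setminus S)\ge(\rho d-\psi)\Delta$ against the $t=d(\rho g-a)$ edges between $N(A)$ and $\mathcal U\setminus[A]$. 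This yields only $|S|+\rho\Delta\le\rho g+o(D)$. Nothing in~\eqref{eq:contain}--\eqref{eq:struct} excludes, say, $\Delta=0$ with $|S|$ close to $\rho g=g+rg/k$, and $rg/k$ may be as large as $D/8$. So counting subsets of $S$ is hopeless for $\Delta$ below roughly $D/8$.

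\emph{The $T$-encoding.} A vertex of $N(A)\setminus F$ has at most $\psi$ neighbours in $S$, and possibly only one in $A$. So a cover $T\subseteq A$ may need $|T|=\Delta$; your bound $|T|\ge\Delta/\psi$ points the wrong way. Also, $T$ need not be $2$-linked. Even a connected encoding via Proposition~\ref{prop:connected-sets} costs about $|T|\log(e\rho d^2)=\Theta(|T|\log k)$ bits, i.e.\ $\Theta(D\log k)$ when $\Delta=\Theta(D)$.

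\textbf{The missing idea is an anchor.} Fix one $A_0\in\cG(a,g)$ with $f(A_0)=(S,F)$, and let $A^*=[A_0]$ and $N^*=N(A_0)\supseteq F$. For every $A$ with $f(A)=(S,F)$:
\begin{itemize}
\item $N(A)\cap N^*$ is a superset of $F$ inside $N^*$, giving at most $2^{g-|F|}$ choices.
\item Since $N(u)\subseteq N^*$ for $u\in A^*$, we have $N(A)\setminus N^*=N(Z_A)\setminus N^*$ for a minimal $Z_A\subseteq A\setminus A^*\subseteq S\setminus A^*$ with $|Z_A|\le g-|F|$.
\end{itemize}
The gain is that this ground set has only $|S|-a\le(9/8+o(1))D$ elements, rather than $|S|$ or $\psi|S|$.

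Then split on $|S|$ rather than on $\Delta$. If $|S|\le g-\eps D$, count subsets of $S$. Otherwise~\eqref{eq:struct} forces $g-|F|\le(1/8+\eps+o(1))D$, while $|S|-a>(1-\eps)D$. Hence $(g-|F|)/(|S|-a)\le1/6$. Together with $2^a$ choices of $A$ inside the closure determined by $N(A)$, and using $H_2(1/6)<2/3$, the number of such $A$ is at most
\[
2^{a+(g-|F|)+H_2(1/6)(|S|-a)}\le2^{g-(1/8-\eps-o(1))D}.
\]
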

\begin{proof}
By the expansion assumption $(8r/k)g\le(g-a)$, we have
\[
 t=d(\rho g - a)=d\left(g-a+\frac rk g\right)=O(d(g-a)).
\]
Then Lemma~\ref{lem:cert} gives
\[
 \log|\cW(a,g)|
 =O\left(k+\frac{g(\log d)^2}{d^2}
          +\frac{(g-a)(\log d)^2}{d}\right)
 =o(g-a),
\]
where the last equality uses $g>k^{12}$,
$g-a\ge8rg/k$, and $d\ge k$.
Thus it suffices to show that, for each $(S,F)\in\cW(a,g)$,
\begin{equation}\label{est:reconstruct}
\text{there are at most $2^{g-\Omega(g-a)}$ sets $A\in\cG(a,g)$ satisfying
$f(A)=(S,F)$}.
\end{equation}

Fix an arbitrary $(S,F)\in\cW(a,g)$ and a sufficiently small
absolute constant $\varepsilon>0$.
If $|S|\le g-\varepsilon(g-a)$, then by~\eqref{eq:contain}, every set $A\in\cG(a,g)$
with $f(A)=(S,F)$ is contained in $S$, so there are at most
$2^{g-\varepsilon(g-a)}$ such sets.
We may therefore assume that \begin{equation}\label{eq:Sbound}
|S|>g-\varepsilon(g-a).
\end{equation}

Fix an arbitrary $A_0\in\cG(a,g)$ with $f(A_0)=(S,F)$, and let
$A^*=[A_0]$ and $N^*=N(A_0)$. The particular choice of $A_0$
does not matter; the pair $(A^*,N^*)$ serves as a fixed anchor,
allowing us to split $N(A)$ into two pieces and handle them differently.
To count the sets $A$ with $f(A)=(S,F)$, we first reconstruct
their neighborhoods $N(A)$, which uniquely determine their closures $[A]$; each closure then allows 
\begin{equation}\label{est:reconstruct1}
\text{at most $2^a$ choices for $A$.}
\end{equation}
To this end, we specify $N(A)$ in two parts:
\[
 N(A)\cap N^*
 \qquad\text{and}\qquad
 N(A)\setminus N^*.
\]
The former is easy to count: since $F\subseteq N(A)\cap N^*$,
the intersection is determined by
$(N(A)\cap N^*)\setminus F\subseteq N^*\setminus F$.
As $|N^*\setminus F|=g-|F|$, 
\begin{equation}\label{est:reconstruct2}
\text{there are at most
$2^{g-|F|}$ choices for $N(A)\cap N^*$}.
\end{equation}

\begin{figure}[ht]
\centering
\begin{tikzpicture}[
  font=\small,
  every node/.style={align=center}
]
\draw (0,2.7) ellipse (3.7 and .85);
\node at (0,3.8) {$S$};

\draw (-1.65,2.7) ellipse (1.3 and .5);
\node at (-1.65,2.7) {$A^*=[A_0]$};
\node at (1.25,3.12) {$S\setminus A^*$};

\node[
  draw,ellipse,
  minimum width=1cm,
  minimum height=.5cm
] (Z) at (1.25,2.55) {$Z_A$};

\node at (4.05,2.7) {$\mathcal U$};

\fill[gray!12] (1.2,0) ellipse (2.3 and .95);

\begin{scope}
  \clip (-1.2,0) ellipse (2.3 and .95);
  \fill[gray!28] (1.2,0) ellipse (2.3 and .95);
\end{scope}

\draw (-1.2,0) ellipse (2.3 and .95);
\draw (1.2,0) ellipse (2.3 and .95);

\node at (-2.1,.05) {$N^*\setminus N(A)$};
\node (outside) at (2.1,.05) {$N(A)\setminus N^*$};
\node[font=\scriptsize] at (0,.32)
  {$N(A)\cap N^*$};

\node[
  draw,ellipse,
  minimum width=.85cm,
  minimum height=.4cm
] at (0,-.22) {$F$};

\node at (-1.8,-1.2) {$N^*=N(A^*)$};
\node at (1.8,-1.2) {$N(A)$};
\node at (4.05,0) {$\mathcal L$};

\draw[->,thick]
  (Z.south east)--(outside.north)
  node[pos=.4,right] {covers};

\end{tikzpicture}
\caption{The reconstruction of $N(A)$.}
\label{fig:reconstruction}
\end{figure}

Next, we bound the number of choices for $N(A)\setminus N^*$.
Note that every vertex of $N(A)\setminus N^*$ must have a neighbor in $A\setminus A^*$.
Then for each $A\in\cG(a,g)$ with $f(A)=(S,F)$,
there exists an inclusion-minimal set
$Z_A\subseteq A\setminus A^*$ that covers $N(A)\setminus N^*$, meaning that
\[
 N(Z_A)\setminus N^*=N(A)\setminus N^*.
\]
Fix one such $Z_A$ for each $A$. Once $Z_A$ is chosen, we can uniquely recover
$N(A)\setminus N^*$ by taking $N(Z_A)\setminus N^*$.
By minimality, each vertex of $Z_A$ has a neighbor
in $N(A)\setminus N^*$ adjacent to no other vertex of $Z_A$.
Then we have
\(
 |Z_A|\le |N(A)\setminus N^*|\le g-|F|.
\)
Moreover, $Z_A\subseteq S\setminus A^*$, a set of size $|S|-a$.
Thus,
\begin{equation}\label{est:reconstruct3}
\text{there are at most $\binom{|S|-a}{\le g-|F|}$ choices for $Z_A$, and hence for $N(A)\setminus N^*$.}
\end{equation}

Combining~\eqref{est:reconstruct1}, ~\eqref{est:reconstruct2}, and ~\eqref{est:reconstruct3}, we obtain
\[
 \bigl|\{A\in\cG(a,g):f(A)=(S,F)\}\bigr|
 \le 2^{a}\cdot 2^{g-|F|}\binom{|S|-a}{\le g-|F|}.
\]
It remains to estimate this bound. 
By Lemma~\ref{lem:cert} and $t=O(d(g-a))$,
\[
 |S|\le\rho|F|+O\left(\frac{g-a}{\log d}\right)
 =\rho|F|+o(g-a).
\]
Together with $|F|\le g$ and the expansion assumption
$(8r/k)g\le g-a$, this inequality gives
\[
|S| -a \le \rho g - a +o(g-a)=\frac rkg + (1+o(1))(g-a) \le \left(\frac 98 + o(1)\right)(g-a),
\]
and
\[
g-|F| \le \rho(g-|F|)\le \rho g -|S| + o(g-a) \le \frac rk g+(\eps + o(1))(g-a)\le \left(\frac18+\eps+o(1)\right)(g-a),
\]
where the third inequality uses~\eqref{eq:Sbound}.
Note that~\eqref{eq:Sbound} also gives $|S|-a>(1-\varepsilon)(g-a)$, so
\[(g-|F|)/(|S|-a)\le1/6\] for sufficiently small $\varepsilon$ and sufficiently large $k$.

By Proposition~\ref{lem:binomial-estimates} and
$H_2(1/6)<2/3$,
\[
 \log\binom{|S|-a}{\le g-|F|}
 \le H_2(1/6)(|S|-a)
 \le \frac23(|S|-a).
\]
Combining all the estimates, we obtain
\[
\log \bigl|\{A\in\cG(a,g):f(A)=(S,F)\}\bigr| \le a+(g-|F|)+\frac23(|S|-a)
 \le a+\left(\frac 78 + \eps+o(1)\right)(g-a)
 =g-\Omega(g-a),
\]
which establishes~\eqref{est:reconstruct} and thereby completes the proof.
\end{proof}

For small 2-linked sets, we do not use the container lemma.
Instead, the isoperimetric bound in Proposition~\ref{prop:isoperimetry} gives stronger expansion for them, so a more elementary counting argument for connected subsets suffices to obtain the following weighted estimates.

\begin{lemma}\label{lem:small}
Let
\[
 \cG_{\mathrm s}
 :=\{A\subseteq\mathcal U:A\ne\varnothing,\ A\text{ is 2-linked},
       |N(A)|\le k^{12}\}.
\]
For sufficiently large $k$,
\begin{align}
 \sum_{A\in\cG_{\mathrm s}}2^{-|N(A)|}
 &=O(M2^{-d/200}),\label{eq:poly1}\\
 \sum_{A\in\cG_{\mathrm s}}2^{-\lambda(|N(A)|-|A|)}
 &\le4M2^{-\lambda d/400},\label{eq:poly2}
\end{align}
where the second estimate holds uniformly for
$C\log k/d\le\lambda\le1$, for a sufficiently large absolute $C$.
\end{lemma}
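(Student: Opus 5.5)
The plan is to bound each sum by grouping $A\in\cG_{\mathrm s}$ according to its size $|A|=s$ and counting 2-linked sets of size $s$ with Proposition~\ref{prop:connected-sets}. In $H^2$ restricted to $\mathcal U$, every vertex has degree at most $d\cdot\rho d\le 2d^2$. Hence, for each fixed $s$, the number of 2-linked $A\subseteq\mathcal U$ with $|A|=s$ is at most $N(2ed^2)^{s-1}\le 2M(2ed^2)^{s-1}$, using the bound for connected sets through a specified vertex and then summing over the $N=\rho M\le 2M$ choices of that vertex.

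To control $|N(A)|$, note first that $|N(A)|\le k^{12}$ forces $|A|\le|N(A)|\le k^{12}$ for large $k$. This requires $|N(A)|\ge|A|$, which holds because $|N(A)|\ge d|A|/\binom{r+4}{r}$ already for small sets. For larger sets I would argue via Kruskal--Katona: $|A|>k^{12}$ would give $|N(A)|\ge |A|\cdot d/\binom{y-k+1}{r}\ge|A|$, since the shadow ratio stays at least $1$ whenever $|A|\le N$. Then Proposition~\ref{prop:isoperimetry}(ii) applies and gives $|N(A)|\ge d s/200$. It also yields the expansion bound $|N(A)|-|A|\ge (d/200-1)s\ge ds/400$.

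For \eqref{eq:poly1} we sum
\[
\sum_{s\ge1}2M(2ed^2)^{s-1}2^{-ds/200}=2M2^{-d/200}\sum_{s\ge1}\bigl(2ed^22^{-d/200}\bigr)^{s-1},
\]
and since $d\ge k\to\infty$ the ratio is $o(1)$, so the geometric series is $O(1)$. For \eqref{eq:poly2} we get $2M2^{-\lambda d/400}\sum_s(2ed^2 2^{-\lambda d/400})^{s-1}$. Here I need $2ed^22^{-\lambda d/400}\le1/2$, which holds when $\lambda d/400\ge \log(4ed^2)$. Since $d\le k^2$, this is $O(\log k)$, and it is guaranteed by $\lambda\ge C\log k/d$ with $C$ large. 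The series is then at most $2$, giving $4M2^{-\lambda d/400}$.

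The main obstacle is the careful justification that every $A\in\cG_{\mathrm s}$ has $|A|\le 2k^{12}$, so that Proposition~\ref{prop:isoperimetry}(ii) is applicable. The degree bound for $H^2$ and the constants in the geometric series are routine by comparison.
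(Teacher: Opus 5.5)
Your overall plan matches the paper's proof, and your constants check out. The plan is: bound $|N(A)|$ from below with Proposition~\ref{prop:isoperimetry}(ii), count 2-linked $s$-sets in $H^2[\mathcal U]$ via Proposition~\ref{prop:connected-sets} with maximum degree at most $\rho d^2\le 2d^2$, and sum the two geometric series.

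The step that fails as written is the one you flagged. Your claim that the Kruskal--Katona shadow ratio $d/\binom{y-k+1}{r}$ is at least $1$ whenever $|A|\le N$ is false. Since $y$ can be as large as $2k+r-1$, the ratio drops below $1$ once $y>2k+r-2$. For example, with $r=1$ and $A=\mathcal U$ you have $|N(A)|=M<N=|A|$, and the ratio is $k/(k+1)$. So you cannot deduce $|A|\le|N(A)|$ for an arbitrary $A$ this way.

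The fix is the one-line double count the paper uses. Every vertex of $A$ has all $d$ of its neighbors in $N(A)$, and every vertex of $N(A)$ has only $\rho d$ neighbors in $\mathcal U$. Hence $d|A|\le\rho d\,|N(A)|$, that is, $|A|\le\rho|N(A)|\le 2k^{12}$ for every $A\in\cG_{\mathrm s}$. Alternatively, if $|A|>2k^{12}$, apply (ii) to a subset $A'\subseteq A$ with $|A'|=2k^{12}$; this gives $|N(A)|\ge|N(A')|\ge dk^{12}/100>k^{12}$, so $A\notin\cG_{\mathrm s}$. With either repair, Proposition~\ref{prop:isoperimetry}(ii) applies directly and the rest of your argument goes through unchanged.
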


\begin{proof}
For every $A\in\cG_{\mathrm s}$, biregularity gives
$|A|\le\rho|N(A)|\le2k^{12}$.
Proposition~\ref{prop:isoperimetry}(ii) therefore gives
\[
 |N(A)|\ge\frac{d|A|}{200},
 \qquad
 |N(A)|-|A|\ge\frac{d|A|}{400}
\]
for sufficiently large $k$.
Since $H^2[\mathcal U]$ has maximum degree at most $\rho d^2$,
Proposition~\ref{prop:connected-sets} bounds the number of
2-linked sets of size $s$ by $N(e\rho d^2)^{s-1}$.
Consequently,
\[
 \sum_{A\in\cG_{\mathrm s}}2^{-|N(A)|}
 \le N\sum_{s\ge1}(e\rho d^2)^{s-1}2^{-ds/200}
 =\frac{N2^{-d/200}}{1-e\rho d^2 2^{-d/200}}
 =O(M2^{-d/200}),
\]
proving~\eqref{eq:poly1}.
Similarly,
\[
 \sum_{A\in\cG_{\mathrm s}}
 2^{-\lambda(|N(A)|-|A|)}
 \le N\sum_{s\ge1}(e\rho d^2)^{s-1}2^{-\lambda ds/400}
 =\frac{N2^{-\lambda d/400}}
 {1-e\rho d^2 2^{-\lambda d/400}}
 \le4M2^{-\lambda d/400}.
\]
The last inequality holds for sufficiently large $k$, since $N\le2M$ and $\lambda d\ge C\log k$ gives
$e\rho d^2 2^{-\lambda d/400}=o(1)$ for sufficiently large $C$.
This proves~\eqref{eq:poly2}.
\end{proof}

\section{Proof of Lemma~\ref{thm:global}: global phase selection}\label{sec:global}
In this section, we prove Lemma~\ref{thm:global}, which bounds the
number of intersecting families $\cF\subseteq\binom{[n]}k$ satisfying
\[
 \dv(\cF)=\min_{x\in[n]}|\cF\setminus\cS_x|>\xi_1M.
\]
We will first bound the number of independent sets (that is, intersecting families) with large diversity in ``sparse" Kneser containers, and then combine this bound with the estimates from Section~\ref{sec:containers} to complete the proof.

For simplicity, throughout this section we write $G=\KG(n,k)$ for the Kneser graph.

\subsection{Counting in sparse Kneser containers}\label{sec:Kcontainer}
We first present the following supersaturation bound, which follows immediately from the degree and least eigenvalue of the Kneser graph. We use the formulation appearing in Yang~\cite{Yang2026}; see also Lov\'asz~\cite{Lovasz1979} for the spectrum of the Kneser graph.

\begin{lemma}[Supersaturation]\label{prop:kneser-super}
Let $n\ge2k$. For every $W\subseteq V(G)$ with $|W|\ge\binom{n-1}{k-1}$,
\begin{equation*}
 2e_G(W)\ge
 \frac{\binom{n-k}{k}+\binom{n-k-1}{k-1}}{\binom nk}
 |W|\left(|W|-\binom{n-1}{k-1}\right).
\end{equation*}
\end{lemma}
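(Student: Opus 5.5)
The plan is to prove the supersaturation bound with the standard expander mixing argument based on the least eigenvalue of $G=\KG(n,k)$, i.e.\ the quadratic-form (Hoffman-type) bound.

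\textbf{Spectrum.} By Lov\'asz, $G$ is $D$-regular with $D:=\binom{n-k}{k}$, and its least eigenvalue is $\lambda_{\min}=-\binom{n-k-1}{k-1}$. Write $\mu:=-\lambda_{\min}$ and $V:=\binom nk$, and let $A$ be the adjacency matrix. Note that $\mu/(D+\mu)=k/n$, because
\[
D+\mu=\binom{n-k-1}{k-1}\cdot\frac{n}{k}.
\]

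\textbf{Quadratic form.} Let $\1_W$ be the indicator vector of $W$, so $2e_G(W)=\1_W^{\top}A\1_W$. Decompose $\1_W=\frac{|W|}{V}\1+w$ with $w\perp\1$. Since $\1$ is the $D$-eigenvector and $w$ lies in the span of the remaining eigenvectors, all of which have eigenvalue at least $-\mu$,
\[
\1_W^{\top}A\1_W\ge D\frac{|W|^2}{V}-\mu\|w\|^2,\qquad \|w\|^2=|W|-\frac{|W|^2}{V}.
\]
Hence
\[
2e_G(W)\ge \frac{(D+\mu)|W|^2}{V}-\mu|W|=\frac{D+\mu}{V}\,|W|\left(|W|-\frac{\mu V}{D+\mu}\right).
\]

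\textbf{Identify the threshold.} It remains to check that $\mu V/(D+\mu)=\binom{n-1}{k-1}$. This follows from $\mu/(D+\mu)=k/n$ and $\binom nk\cdot\frac kn=\binom{n-1}{k-1}$. This gives exactly the stated inequality, with the hypothesis $|W|\ge\binom{n-1}{k-1}$ only guaranteeing that the right-hand side is nonnegative; the inequality itself holds for all $W$.

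\textbf{Main point to verify.} The only real input is the value of the least eigenvalue of the Kneser graph, which is quoted from Lov\'asz. The identity $D+\mu=\frac nk\binom{n-k-1}{k-1}$ follows from Pascal's rule: $\binom{n-k}{k}=\binom{n-k-1}{k}+\binom{n-k-1}{k-1}$ and $\binom{n-k-1}{k}=\frac{n-2k}{k}\binom{n-k-1}{k-1}$. For $n=2k$ the graph is a perfect matching with $D=\mu=1$, and the same computation goes through.
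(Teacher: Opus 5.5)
Your proposal is correct and follows essentially the same route as the paper: decompose $\1_W$ along $\1$ and its orthogonal complement, bound the quadratic form using Lov\'asz's least eigenvalue $-\binom{n-k-1}{k-1}$, and finish with the identity $\bigl(\binom{n-k}{k}+\binom{n-k-1}{k-1}\bigr)\binom{n-1}{k-1}=\binom{n-k-1}{k-1}\binom nk$. You additionally verify that identity via Pascal's rule and check the case $n=2k$, which the paper only asserts.
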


\begin{proof}
We include a short
proof for completeness.
Recall that Lovász~\cite{Lovasz1979} showed that the eigenvalues of $G$ are
$(-1)^j\binom{n-k-j}{k-j}$ for $0\leq j\leq k$. In particular,
$G$ is $\binom{n-k}{k}$-regular and has least eigenvalue
$-\binom{n-k-1}{k-1}$.

Let $A$ be the adjacency matrix of $G$, $\1_W$ be the indicator vector of $W$, and $\1$ be the all-ones vector.
Note that we can write
$\1_W=\frac{|W|}{\binom nk}\1+f$, where $f\perp\1$, and that
$\|f\|^2=|W|-\frac{|W|^2}{\binom nk}$. Then it follows that
\[
 \begin{aligned}
 2e_G(W)
 &=\1_W^{T}A\1_W
=\frac{|W|^2}{\binom{n}{k}^2}\1^{T}A\1 + f^{T}Af\ge\frac{\binom{n-k}{k}}{\binom nk}|W|^2-\binom{n-k-1}{k-1}\|f\|^2\\
 &\geq
 \frac{\binom{n-k}{k}+\binom{n-k-1}{k-1}}{\binom nk}|W|^2
 -\binom{n-k-1}{k-1}|W|.
 \end{aligned}
\]
Since
\(
 \left(\binom{n-k}{k}+\binom{n-k-1}{k-1}\right)
 \binom{n-1}{k-1}
 =
 \binom{n-k-1}{k-1}\binom nk,
\)
the desired inequality follows.
\end{proof}

With the notation introduced in Section~\ref{sec:notation},
Lemma~\ref{prop:kneser-super} gives
\[
 2e_G(W)\ge\frac dM|W|(|W|-M)
 \qquad\text{whenever }|W|\ge M.
\]
Consequently, for every $W\subseteq V(G)$,
\begin{equation}\label{eq:size-e}
 |W|\le M+\frac{2e_G(W)}d.
\end{equation}

We also state the following special case of the Das--Tran removal
lemma~\cite[Theorem~1.2]{DasTran2016} for set families, obtained by setting
$\ell=1$ in their notation.
\begin{lemma}[Das--Tran~\cite{DasTran2016}]\label{prop:DT}
There is an absolute constant $C_{\mathrm{DT}}>1$ such that the following holds. Let $n>2k$, and let $\cF\subseteq\binom{[n]}k$ satisfy
\[
 |\cF|=(1-\alpha)M,
\]
with at most $\beta Md$ disjoint pairs, where $\alpha\in \mathbb{R}$, $\beta\ge0$, and
\begin{equation}\label{eq:DT-hypothesis}
 \max\{2|\alpha|,\beta\}
 \le\frac{n-2k}{(20C_{\mathrm{DT}})^2n}.
\end{equation}
Then there is a full star $\cS_x$, with $x\in[n]$, such that
\[
 |\cF\mathbin\triangle\cS_x|
 \le C_{\mathrm{DT}}(\alpha+2\beta)\frac{n}{n-2k}M.
\]
\end{lemma}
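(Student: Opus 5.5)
This is the case $\ell=1$ of \cite[Theorem~1.2]{DasTran2016}, so the plan is to quote that theorem and check that the parameters translate correctly. Their theorem concerns families of $k$-subsets of $[n]$ of size $(1-\alpha)\binom{n-1}{k-1}$ with at most $\beta\binom{n-1}{k-1}\binom{n-k-1}{k-1}$ disjoint pairs. In our notation this product is $\beta Md$, because $d=\binom{n-k-1}{k-1}$ by definition.

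Their hypothesis is a smallness condition on $\max\{2|\alpha|,\beta\}$ in terms of $(n-2k)/n$ and their constant. Their conclusion is a distance bound to a full star of the form $C(\alpha+2\beta)\frac{n}{n-2k}\binom{n-1}{k-1}$. We will rename their constant $C_{\mathrm{DT}}$ and enlarge it if necessary so that $C_{\mathrm{DT}}>1$. The smallness condition then takes the form \eqref{eq:DT-hypothesis}. One point needs care: \eqref{eq:DT-hypothesis} must be at least as strong as their hypothesis, so that the restriction is harmless. For $\alpha<0$ the family is larger than $M$; the statement allows this, since only $|\alpha|$ appears in the hypothesis.

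For completeness, we sketch why such a removal lemma holds, via the spectral route already used in Lemma~\ref{prop:kneser-super}. Write $\1_\cF=\sum_j f_j$ in the eigenspaces $V_j$ of $\KG(n,k)$, whose eigenvalues are $\lambda_j=(-1)^j\binom{n-k-j}{k-j}$.
\begin{enumerate}
\item[(1)] Compute $2e_G(\cF)=\sum_j\lambda_j\|f_j\|^2\le 2\beta Md$.
\item[(2)] Observe that $\|f_0\|^2=|\cF|^2/\binom nk$ is fixed by $\alpha$, that $\lambda_1=-d$, and that $|\lambda_j|\le d\cdot\frac{k-1}{n-k-1}$ for $j\ge2$. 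Since $n>2k$, the factor $\frac{k-1}{n-k-1}$ is at most $1-\Omega\left(\frac{n-2k}{n}\right)$, which gives a spectral gap of relative size $(n-2k)/n$.
\item[(3)] Combine (1) and (2) to conclude that the mass $\sum_{j\ge2}\|f_j\|^2$ is $O\left((\alpha+\beta)\frac{n}{n-2k}M\right)$. Thus $\1_\cF$ is close in $L^2$ to a function of degree at most one, that is, to $\sum_i c_i\1_{\cS_i}$ plus a constant.
\end{enumerate}

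The main obstacle is the last step: a Boolean function close to degree one on the slice must be close to a single dictator $\1_{\cS_x}$, with only linear loss. A spectral argument alone gives this only up to constant factors and with a slice-FKN-type theorem, which is a delicate input. This is why we would rather cite Das--Tran directly. Their proof instead proceeds combinatorially: they bound the maximum degree, pick $x$ of maximum degree, and show that the members avoiding $x$ are few by counting the disjoint pairs they create with $\cS_x\cap\cF$. If forced to reprove the lemma, we would try that degree-based argument first.
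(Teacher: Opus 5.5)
Your proposal is correct and follows the paper's approach: the paper gives no independent proof and obtains the lemma by setting $\ell=1$ in \cite[Theorem~1.2]{DasTran2016}, with $\beta\binom{n-1}{k-1}\binom{n-k-1}{k-1}=\beta Md$ because $d=\binom{n-k-1}{k-1}$, exactly as you propose. The spectral sketch is an optional aside that the paper does not use, and your citation-based argument does not rely on it.
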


Using the supersaturation bound and the removal lemma, we show that every ``sparse'' container $W$ in the Kneser graph contains only a negligible number of independent sets with large diversity.
\begin{lemma}\label{lem:sparse}
For all sufficiently large $k$, for every $0<\zeta\le1$ and every $W\subseteq V(G)$ with \[\Delta(G[W])\le2d/\log d,\] the number of independent sets $\cI\subseteq W$ satisfying $\dv(\cI)>\zeta M$ is at most $2^{M-\Omega(\zeta M/k)}$.
\end{lemma}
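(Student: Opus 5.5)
The plan is to combine the supersaturation bound~\eqref{eq:size-e}, a matching-type count of independent sets in a sparse graph, and the Das--Tran removal lemma (Lemma~\ref{prop:DT}) applied to the independent set $\cI$ itself. I have not closed the argument: the last step below is a gap, and I say where.

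\textbf{Step 1: $W$ is small and almost all its independent sets live near size $M$.} Since $\Delta(G[W])\le 2d/\log d$, we have $e_G(W)\le |W|d/\log d$. Plugging this into~\eqref{eq:size-e} gives $|W|\le M+2|W|/\log d$, hence $|W|\le(1+O(1/\log d))M$.

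A sharper consequence comes from a maximal matching $\mathcal M$ in $G[W]$. Every edge meets $V(\mathcal M)$, so $|\mathcal M|\ge e_G(W)/(2\Delta)$. Supersaturation gives $e_G(W)\ge \tfrac d{2M}|W|(|W|-M)$, so $|\mathcal M|=\Omega((|W|-M)\log d)$. An independent set takes at most one endpoint of each matching edge, so $G[W]$ has at most $2^{|W|}(3/4)^{|\mathcal M|}\le 2^{M-\Omega((|W|-M)\log d)}$ independent sets. The excess $|W|-M$ therefore costs nothing, and the whole task reduces to extracting a further saving of $\Omega(\zeta M/k)$ from the diversity condition.

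\textbf{Step 2: diversity forces a size deficit.} Apply Lemma~\ref{prop:DT} to an independent set $\cI$ with $\beta=0$ and $|\cI|=(1-\alpha)M$. Here $n/(n-2k)\le 2k+2$.

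If $2\alpha$ is below the threshold in~\eqref{eq:DT-hypothesis}, which is $\Theta(1/k)$, the lemma yields a star with $|\cI\setminus\cS_x|\le|\cI\mathbin\triangle\cS_x|=O(k\alpha M)$. The hypothesis $\dv(\cI)>\zeta M$ then forces $\alpha=\Omega(\zeta/k)$. Otherwise $\alpha=\Omega(1/k)\ge\Omega(\zeta/k)$ directly. In both cases
\[
|\cI|\le M-c\zeta M/k.
\]

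\textbf{Step 3: turn the deficit into a counting saving, localized at a star.} A bare size deficit is useless: a uniformly random subset of a star already has size about $M/2$. The saving has to come from the sets of $\cI$ lying outside a star, so I would work relative to a well-chosen center.

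Choose $x$ minimizing $|\cI\setminus\cS_x|$, so that $A:=\cI\setminus\cS_x$ has $|A|>\zeta M$. By Lemma~\ref{lem:encoding}, once $A$ is fixed there are at most $2^{M-|N_H(A)|}$ choices for $\cI\cap\cS_x$. So the count is at most
\[
n\sum_{A}2^{M-|N_H(A)|},
\]
the sum running over admissible $A\subseteq W\setminus\cS_x$.

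The sparsity of $W$ enters here. Each $F\in A$ has $d$ Kneser neighbours in $\cS_x$, and all but $2d/\log d$ of them lie outside $W$. So $W\cap\cS_x$ misses a set of size comparable to $|N_H(A)|$, which by Step~1 bounds $|W\setminus\cS_x|$ in terms of $|N_H(A)|$ up to a factor $1+O(1/\log d)$.

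What remains is to compare the entropy of choosing $A$ inside $W\setminus\cS_x$ with the loss $|N_H(A)|$. In the best case we want $|N_H(A)|-\log\#\{A\}\ge\Omega(|A|/k)\ge\Omega(\zeta M/k)$, using $\rho=1+r/k$ and the isoperimetry of Proposition~\ref{prop:isoperimetry}.

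\textbf{Main obstacle.} This last comparison is the crux. Biregularity only gives $|N_H(A)|\ge|A|/\rho$, and $W\setminus\cS_x$ may be larger than $|N_H(A)|$, so a naive $2^{|W\setminus\cS_x|}$ count for $A$ can lose.

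The first thing I would try is to charge the excess $|W\setminus\cS_x|-|N_H(A)|$ to the matching saving of Step~1, which absorbs any excess of $|W|$ over $M$ with a factor $\log d$ to spare. Failing that, I would apply Lemma~\ref{prop:DT} to $W$ itself, restricted to the regime $|W|\ge M-c\zeta M/k$, to pin $W$ near a single star. I would then count $A$ using the $1/\rho$ deficit together with the $\Omega(\zeta/k)$ size deficit from Step~2.
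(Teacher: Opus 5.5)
Your proposal has a genuine gap, and it is exactly where you flag it: Step~3 is never closed, and neither fallback works as stated. The missing idea is to apply the Das--Tran lemma to the container $W$ rather than to $\cI$, and to split according to $e_G(W)$ rather than $|W|-M$.

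In Step~1 you already have $|\mathcal M|\ge e_G(W)\log d/(4d)$. You then weaken it, via supersaturation, to a bound in terms of $|W|-M$, which can be zero or negative even when $G[W]$ has many edges. Keep $e_G(W)$ instead. Combining the matching count with \eqref{eq:size-e} gives
\[
\log\#\{\cI\subseteq W:\cI\text{ independent}\}\le |W|-\log(4/3)|\mathcal M|\le M-\Bigl(\tfrac{\log(4/3)}{4}\log d-2\Bigr)\frac{e_G(W)}{d}.
\]
So it suffices to have $e_G(W)\ge c\zeta Md/k$. The case $|W|\le M-c\zeta M/k$ is trivial by counting all subsets of $W$.

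Now suppose $|W|>M-c\zeta M/k$ and $e_G(W)<c\zeta Md/k$. Then $W$ has fewer than $\beta Md$ disjoint pairs with $\beta=c\zeta/k$. Put $\alpha=1-|W|/M$. The size assumption gives $\alpha\le c\zeta/k$, and \eqref{eq:size-e} gives $\alpha\ge -2e_G(W)/(Md)>-2\beta$. Taking $c=1/(10^4C_{\mathrm{DT}}^2)$, Lemma~\ref{prop:DT} applies to $W$ itself and yields
\[
|W\mathbin\triangle\cS_x|\le 9C_{\mathrm{DT}}c\zeta M<\zeta M.
\]
Then every $\cI\subseteq W$ has $\dv(\cI)\le|W\setminus\cS_x|<\zeta M$, so there is nothing to count.

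You cannot apply Lemma~\ref{prop:DT} to $W$ unconditionally, as your second fallback suggests. Sparsity alone allows $e_G(W)$ up to about $|W|d/\log d$, i.e.\ $\beta\approx1/\log d$, far above the $\Theta(1/k)$ threshold in \eqref{eq:DT-hypothesis}. It is precisely that complementary many-edges case which the matching bound, measured in $e_G(W)$, handles.

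With this dichotomy, your Step~2 (Das--Tran applied to $\cI$, which is correct but idle) is no longer needed. The same holds for the comparison in Step~3 of the entropy of $A$ against $|N_H(A)|$. That comparison is essentially the container problem itself, and the sparsity of $W$ does not let you resolve it directly.
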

\begin{proof}
Let $c>0$ be a small absolute constant to be chosen later.
If $|W|\le M-c\zeta M/k$, simply counting all subsets of $W$
gives the desired bound. Hence we may assume
$|W|>M-c\zeta M/k$.

We may further assume that $W$ contains at least one independent set $\cI$
with $\dv(\cI)>\zeta M$; otherwise, there is
nothing to count.
We claim that 
\begin{equation}\label{ineq:eW}
    e_G(W)\ge c\zeta Md/k.
\end{equation}
Suppose not, i.e., $e_G(W)<c\zeta Md/k$, so $W$
contains fewer than $c\zeta Md/k$ disjoint pairs.
Let $\alpha=1-|W|/M$ and $\beta=c\zeta/k$.
Our lower bound on $|W|$ gives $\alpha\le c\zeta/k$.
Moreover,~\eqref{eq:size-e} gives
$\alpha\ge-2e_G(W)/(Md)>-2\beta=-2c\zeta/k$,
so $|\alpha|\le2c\zeta/k$.
Let $c:=1/(10^4C_{\mathrm{DT}}^2)$.
Then condition~\eqref{eq:DT-hypothesis} holds, and thus by
Lemma~\ref{prop:DT}, there exists a full star $\cS_x$ with
\[
 |W\mathbin\triangle\cS_x|
 \le C_{\mathrm{DT}}(\alpha+2\beta)\frac nr M
 \le9C_{\mathrm{DT}}c\zeta M<\zeta M.
\]
Thus every $\cI\subseteq W$ satisfies
$\dv(\cI)\le|W\setminus\cS_x|<\zeta M$,
a contradiction.

Since $\Delta(G[W])\le2d/\log d$, greedily choosing edges
and deleting all edges incident to their endpoints
yields a matching $\mathcal{M}$
of size $|\mathcal{M}|\ge e_G(W)\log d/(4d)$.
Note that an independent set contains at most one endpoint of each
matching edge. 
It follows that
\[
\begin{aligned}
 \log\#\{\cI\subseteq W:\cI\text{ is independent}\}
 &\le \log\left(3^{|\mathcal M|}2^{|W|-2|\mathcal M|}\right)
 \le M-\left(\frac{\log(4/3)\log d}{4}-2\right)
          \frac{e_G(W)}d\\
 &\le M - \frac{e_G(W)}d \le M-c\zeta M/k,
\end{aligned}
\]
where the second inequality uses~\eqref{eq:size-e} and the
bound on $|\mathcal M|$, the third holds for sufficiently
large $k$, and the last follows from~\eqref{ineq:eW}.
This completes the proof.

\end{proof}

To conclude this subsection, we give some intuition for why we count independent sets in the sparse containers considered in Lemma~\ref{lem:sparse}.
Fix $x\in[n]$, and let $H$ be the corresponding bipartite inclusion graph from Section~\ref{sec:containers}, with parts $\mathcal U$ and $\mathcal L$.
For a certificate $(S,F)$ produced by Lemma~\ref{lem:cert}, define
\[
 W(S,F):=\{[n]\setminus(u\cup\{x\}):u\in S\}
 \cup \{\{x\}\cup v:v\in\mathcal L\setminus F\}.
\]
Note that $W(S,F)\subseteq V(G)$.
Under this correspondence, edges of $H$ are precisely the
Kneser edges between $\cS_x$ and $V(G)\setminus\cS_x$.
Thus~\eqref{eq:degree} implies that each vertex in
$W(S,F)\cap\cS_x$ has at most $d/\log d$ neighbors in
$W(S,F)\setminus\cS_x$, and vice versa.
Moreover, $W(S,F)\cap\cS_x$ is independent, since all its
members contain $x$.
The degree within $W(S,F)\setminus\cS_x$ is at most $rd/k$,
because $G[V(G)\setminus\cS_x]$ is isomorphic to $\KG(n-1,k)$, which
is regular of degree
\begin{equation}\label{eq:outside-degree}
 \binom{n-1-k}{k}=\binom{k+r-1}{r-1}=\frac{rd}{k}.
\end{equation}
Consequently, for sufficiently large $k$,
\begin{equation}\label{eq:certificate-degree}
 \Delta(G[W(S,F)])\le\frac{d}{\log d}+\frac{rd}{k}
 \le\frac{2d}{\log d}.
\end{equation}
Hence every such $W(S,F)$ satisfies the degree hypothesis of Lemma~\ref{lem:sparse}.

\subsection{Proof of Lemma~\ref{thm:global}}
\begin{proof}[Proof of Lemma~\ref{thm:global}]
Let $\xi_1\ge C_1(\log k)^2/d$, where $C_1$ is a sufficiently large absolute constant.
We may assume $\xi_1\le1$, since otherwise the
Erd\H{o}s--Ko--Rado theorem gives $|\cF|\le M<\xi_1M$,
so there are no families under consideration.
Fix a center $x\in[n]$, and let $H$ be the corresponding
bipartite inclusion graph. Throughout the proof, neighborhoods, closures, and 2-linkedness are taken with respect to $H$, unless otherwise stated.

By Lemma~\ref{lem:encoding}, each family $\cF$ is determined by
$A_x(\cF)$ and $B_x(\cF)$; moreover, once $A_x(\cF)$ is fixed, there are exactly
$2^{M-|N(A_x(\cF))|}$ intersecting families with this $A_x(\cF)$.
Thus, the core of the proof is to control the possible choices for $A_x(\cF)$.
To this end, we view each candidate for $A_x(\cF)$ as a union
of its 2-linked components, whose neighborhoods are disjoint.
We say a component $A$ is \emph{small} if $|N(A)|\le k^{12}$,
and \emph{large} otherwise.
We consider the small and large components separately.
Briefly, all the small components will be handled using
Lemma~\ref{lem:small}. For the large components, instead of
dealing with them individually, we join them into one
2-linked set. If this set has sufficient expansion, we use
Lemma~\ref{lem:reconstruct}; otherwise, we use
Lemma~\ref{lem:cert} together with Lemma~\ref{lem:sparse}.

For now, fix an arbitrary intersecting family $\cF$ with $\dv(\cF)>\xi_1M$ and let $A_1,\ldots,A_m$ be the 2-linked components of $A_x(\cF)$.
We first establish properties of these components that
will be used in the subsequent counting argument.
Let
\[
 A_{\mathrm s}
 =\bigcup_{A_i\text{ small}}A_i,
 \qquad
 A_{\mathrm{lg}}
 =\bigcup_{A_i\text{ large}}A_i.
\]
For each small component $A_i$, the biregularity of $H$ gives $|A_i|\le\rho|N(A_i)|\le2k^{12}$.
Then by Proposition~\ref{prop:isoperimetry}(ii), we have
$|A_i|\le200|N(A_i)|/d$, and thus
\[
 |A_{\mathrm s}|
 \le \frac{200}{d}\sum_{A_i\text{ small}}|N(A_i)|
 \le \frac{200|\mathcal L|}{d}
 =\frac{200M}{d},
\]
where the second inequality uses the fact that different components have disjoint neighborhoods.
Together with $|A_x(\cF)|\ge\dv(\cF)>\xi_1M$ and
$\xi_1d\ge C_1(\log k)^2$, this shows that for sufficiently large $k$
\begin{equation}\label{eq:Algbound}
     |A_{\mathrm{lg}}|\ge\frac34\xi_1M.
\end{equation}

We next join the large components into one 2-linked set.

\begin{claim}\label{claim:largecomp}
There exists a set
$T\subseteq\mathcal U\setminus A_{\mathrm{lg}}$ with
\[
 |T|\le\frac{|N(A_{\mathrm{lg}})|}{k^{11}}
\]
such that $A_{\mathrm{lg}}\cup T$ is 2-linked.
\end{claim}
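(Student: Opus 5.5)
The plan is to connect the large components with short paths in the square graph $H^2[\mathcal U]$, using the fact that this graph has small diameter. Two sets $u,u'\in\mathcal U$ are adjacent in $H^2$ exactly when $|u\cap u'|\ge k+r-2$, i.e.\ $|u\triangle u'|=2$. By swapping one element at a time, any two vertices of $\mathcal U$ are joined by a path in $H^2[\mathcal U]$ of length at most $k+r-1\le k+1$. So the internal vertices of such a path give at most $k$ extra vertices linking two components.

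Let the large components be $A_1,\dots,A_\ell$. Each satisfies $|N(A_i)|>k^{12}$, and the neighbourhoods of distinct 2-linked components are pairwise disjoint. Summing gives
\[
\ell<\frac{|N(A_{\mathrm{lg}})|}{k^{12}}.
\]
If $\ell\le1$ we take $T=\varnothing$. Otherwise we connect $A_1$ to each $A_j$ with $j\ge2$ by a shortest $H^2$-path in $\mathcal U$ from a vertex of $A_1$ to a vertex of $A_j$, and we let $T$ be the set of all internal vertices of these paths not lying in $A_{\mathrm{lg}}$. Then $A_{\mathrm{lg}}\cup T$ is 2-linked: every $A_j$ is 2-linked, each path is a chain of $H^2$-adjacent vertices, and every path meets $A_1$. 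This gives
\[
|T|\le(\ell-1)k<\frac{k\,|N(A_{\mathrm{lg}})|}{k^{12}}=\frac{|N(A_{\mathrm{lg}})|}{k^{11}}.
\]
By construction $T\cap A_{\mathrm{lg}}=\varnothing$, as required. A path may pass through a small component of $A_x(\cF)$; this is harmless, since the claim only requires $T\subseteq\mathcal U\setminus A_{\mathrm{lg}}$.

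The main step is the diameter bound for $H^2[\mathcal U]$. To check it, take $u,u'\in\mathcal U$ and pick $a\in u\setminus u'$ and $b\in u'\setminus u$. Then $u-a+b$ is an $(k+r-1)$-subset of $[n]\setminus\{x\}$. It shares the $k+r-2$-set $u\setminus\{a\}$ with $u$, so both contain that $(k-1)$-subset, hence they have a common neighbour in $\mathcal L$. Since $k+r-2\ge k-1$, they are at distance $2$ in $H$. Each swap reduces $|u\setminus u'|$ by one, so at most $k+r-1$ steps are needed. This yields the factor $k$, up to the constant $r\le 2$, which is absorbed because we have the spare factor $k^{12}$ against $k^{11}$; if needed, one uses $(\ell-1)(k+1)\le k\ell$ for $\ell\ge2$ together with the strict inequality.
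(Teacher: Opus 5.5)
Your proposal is correct and is essentially the paper's own argument: there are fewer than $|N(A_{\mathrm{lg}})|/k^{12}$ large components because their neighborhoods are disjoint, and each is joined to a fixed one by a one-element-swap path in $H^2[\mathcal U]$ that contributes at most $k$ new vertices. There are two harmless slips. First, for $r=2$, adjacency in $H^2[\mathcal U]$ is $|u\cap u'|\ge k-1$ rather than $\ge k$, but you only use the sufficient direction. Second, your closing fallback is wrong: $(\ell-1)(k+1)\le k\ell$ fails for $\ell>k+1$, and there is no spare factor between $k^{12}$ and $k^{11}$. It is not needed, though, since your length bound $k+r-1\le k+1$ already gives at most $k$ internal vertices per path (in fact at most $k-1$, because $|u\setminus u'|\le k$ always).
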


\noindent\emph{Proof of the claim.}
There are fewer than $|N(A_{\mathrm{lg}})|/k^{12}$
large components, since their neighborhoods are disjoint
and each has size greater than $k^{12}$.
Note that any two vertices $u,v\in\mathcal U$ can be joined by a path of length at most $k$ in $H^2[\mathcal U]$ (by replacing one element at a time), since $|u\setminus v|\le k$.
Choose a representative vertex from each large component and join
each to the first by such a path. Let $T$ be the set of
vertices on these paths outside $A_{\mathrm{lg}}$.
Then $A_{\mathrm{lg}}\cup T$ is 2-linked and
$|T|\le |N(A_{\mathrm{lg}})|/k^{11}$, as required.

For each possible $A_{\mathrm{lg}}$, fix one set $T$
satisfying the claim. Thus each family $\cF$ determines
\[
 \widehat A:=A_{\mathrm{lg}}\cup T,\qquad
 a:=|[\widehat A]|,\qquad g:=|N(\widehat A)|, \qquad g_{\mathrm{lg}}:=|N(A_{\mathrm{lg}})|.
\]
The set $N(\widehat A)\setminus N(A_{\mathrm{lg}})$ consists of the new neighbors introduced when joining the large components. Since $d\le k^2$,
\begin{equation}\label{eq:joining}
 |N(\widehat A)\setminus N(A_{\mathrm{lg}})|=g-g_{\mathrm{lg}}
 \le d|T|\le\frac{g_{\mathrm{lg}}}{k^9}
 \le\frac{M}{k^9}.
\end{equation}
Also, by biregularity,~\eqref{eq:Algbound}, and $\rho\le2$,
\[
 g\ge g_{\mathrm{lg}}
 \ge\frac{|A_{\mathrm{lg}}|}{\rho}
 \ge\frac{3\xi_1M}{8}>k^{12},
\]
for sufficiently large $k$.

Using these properties, we now count the intersecting families $\cF$
with $\dv(\cF)>\xi_1M$ in two cases, according to the
expansion of their associated sets $\widehat A$.

\medskip
\noindent
\textbf{Case 1: $g-a\ge8rg/k$.}

Fix values of $a,g$ satisfying this inequality. We first count the possible choices for $A_{\mathrm{lg}}$, then sum over all possible choices for $A_{\mathrm s}$, and count the families $\cF$ with $A_x(\cF)=A_{\mathrm{lg}}\cup A_{\mathrm s}$.

By Lemma~\ref{lem:reconstruct}, there are at most $2^{g-\Omega(g-a)}$ choices for $\widehat A$.
Since $A_{\mathrm{lg}}=\widehat A\setminus T$ and $|T|\le g/k^{11}$, each $\widehat A$ allows at most
\[
 \sum_{j\le g/k^{11}}\binom Nj
 =2^{O(g/k^{10})}
\]
choices for $T$, and hence for $A_{\mathrm{lg}}$, by Proposition~\ref{lem:binomial-estimates} and $\log N=O(k)$.

Let
\[
 \mathcal A_{\mathrm s}
 :=\{A\subseteq\mathcal U:
       |N(C)|\le k^{12}
       \text{ for every 2-linked component }C\text{ of }A\},
\]
including the empty set.
Note that $A_{\mathrm s}\in\mathcal A_{\mathrm s}$ uniquely determines its small components as its 2-linked components.
Summing over all these choices of $a$, $g$, $\widehat A$, $A_{\mathrm{lg}}$, and $A_{\mathrm s}$, and using Lemma~\ref{lem:encoding}, we conclude that the total number of intersecting families in Case~1 is at most
\[
 \sum_{a,g}
 \sum_{\widehat A\in\cG(a,g)}
 \sum_{A_{\mathrm{lg}}}
 \sum_{A_{\mathrm s}\in\mathcal A_{\mathrm s}}
 2^{M-g_{\mathrm{lg}}-|N(A_{\mathrm s})|}
 \le \sum_{a,g}
 \sum_{\widehat A\in\cG(a,g)}
 \sum_{A_{\mathrm{lg}}}2^{M-g + g/k^9}
 \sum_{A_{\mathrm s}\in\mathcal A_{\mathrm s}}
 2^{-|N(A_{\mathrm s})|},
\]
where the inequality follows from~\eqref{eq:joining}.
Here the sum for $a, g$ ranges over $0\le a\le N$, $3\xi_1M/8\le g\le M$, and $g-a\ge8rg/k$.

We next estimate the sum over $A_{\mathrm s}$.
Each $A_{\mathrm s}\in\mathcal A_{\mathrm s}$ uniquely
determines a collection $\mathcal C_{\mathrm s}$ of members of $\cG_{\mathrm s}$
with $|N(A_{\mathrm s})|=\sum_{A\in \mathcal{C}_{\mathrm s}} |N(A)|$. Thus, by
Lemma~\ref{lem:small},
\[
 \begin{aligned}
 \sum_{A_{\mathrm s}\in\mathcal A_{\mathrm s}}
 2^{-|N(A_{\mathrm s})|}
 &\le
 \sum_{\mathcal C_{\mathrm s}\subseteq\cG_{\mathrm s}}
 \prod_{A\in\mathcal C_{\mathrm s}}2^{-|N(A)|}=\prod_{A\in\cG_{\mathrm s}}
 \left(1+2^{-|N(A)|}\right)\\
 &\le
 \exp\left(\sum_{A\in\cG_{\mathrm s}}2^{-|N(A)|}\right)
 =\exp\left(O(M/2^{d/200})\right),
 \end{aligned}
\]
where the first inequality allows all subcollections of
$\cG_{\mathrm s}$, without requiring their neighborhoods
to be disjoint.

Finally, combining all the bounds above, we conclude that
the total number of families in Case~1 is at most
\begin{equation}\label{inq:case1bound}
 \begin{aligned}
 &\sum_{a,g}
 2^{g-\Omega(g-a)}
2^{O(g/k^{10})}
2^{M-g+g/k^9}
 \exp\left(O(M/2^{d/200})\right)=
 \sum_{a,g}
 2^{M-\Omega(g-a)+O(g/k^9)+O(M/2^{d/200})}\\
\le\quad  &
 (N+1)(M+1)\,2^{M-\Omega(\xi_1M/k)}
 =2^{M-\Omega(\xi_1M/k)}.
 \end{aligned}
\end{equation}
Here $g-a\ge8rg/k$ and $g\ge3\xi_1M/8$ imply
$g-a=\Omega(\xi_1M/k)$.
Together with the lower bound on $\xi_1$, these ensure that the saving $\Omega(g-a)$ absorbs all error terms.
The factor $(N+1)(M+1)$ counts the possible pairs $(a,g)$ and is also absorbed into this saving, since
$\log((N+1)(M+1))=O(k)=o(\xi_1M/k)$.

\medskip
\noindent
\textbf{Case 2: $g-a<8rg/k$.}

Under this expansion condition, Lemma~\ref{lem:reconstruct} does not apply. 
Instead, we first fix a certificate $(S,F)$ for $\widehat A$; the number of possible certificates is small by Lemma~\ref{lem:cert}.
We then fix a choice for $A_{\mathrm s}$, and use $(S,F)$ and $A_{\mathrm s}$ to construct a sparse Kneser container, to which we can apply Lemma~\ref{lem:sparse}.

For each feasible pair $(a,g)$ in this case, we have
\begin{equation}\label{inq:tbound}
     0\le t=d(\rho g-a)=d((r/k)g+g-a)<9drg/k.
\end{equation}
Thus Lemma~\ref{lem:cert} gives
\[
     \log|\cW(a,g)|
 \le O\left(k+\frac{(g+t)(\log d)^2}{d^2}\right)
 =O\left(\frac{M(\log k)^2}{kd}\right),
\]
where we used $g\le M$ and $k\le d\le k^2$.
Summing over all feasible pairs
$a,g$ as in Case~1, we have
\begin{equation}\label{eq:lowcerttotal}
 \sum_{a,g}|\cW(a,g)|
 \le 2^{O(M(\log k)^2/(kd))},
\end{equation}
where the sum ranges over $0\le a\le N$, $3\xi_1M/8\le g\le M$, and $g-a<8rg/k$.

Now fix a feasible pair $(a,g)$ in this range and a certificate
$(S,F)\in\cW(a,g)$, and let
\[
 \mathfrak F(S,F)
 :=\{\cF:\cF\text{ is intersecting},\
          \dv(\cF)>\xi_1M,\
          f(\widehat A)=(S,F)\},
\]
where $\widehat A$ is the 2-linked set obtained by joining the large components of $A_x(\cF)$ as in Claim~\ref{claim:largecomp}.
The certificate set $F\subseteq N(\widehat A)$ may intersect $B_x(\cF)$, but only within the additional neighborhood $N(\widehat A)\setminus N(A_{\mathrm{lg}})$, since $N(A_{\mathrm{lg}})\cap B_x(\cF)=\varnothing$.
For each $\cF\in\mathfrak F(S,F)$, define
\[
 \cF_0:=\cF\setminus\{\{x\}\cup v:v\in B_x(\cF)\cap F\}.
\]
Thus, $\cF$ is fully determined by $B_x(\cF)\cap F$ and $\cF_0$.
By~\eqref{eq:joining}, we have
\[
|B_x(\cF)\cap F|\le |N(\widehat A)\setminus N(A_{\mathrm{lg}})|\le M/k^9,
\] and thus the number of choices for $B_x(\cF)\cap F$ is at most
\begin{equation}\label{eq:Dcost}
 \sum_{j\le M/k^9}\binom Mj=2^{O(M/k^8)},
\end{equation}
by Proposition~\ref{lem:binomial-estimates} and $\log M=O(k)$.
Moreover
\begin{equation}\label{inq:div}
 \dv(\cF_0)\ge\dv(\cF)-|B_x(\cF)\cap F|>\xi_1M/2
\end{equation}
for sufficiently large $k$, since $\xi_1\ge C_1(\log k)^2/d$ and $d\le k^2$.

To count the possible families $\cF_0$, we fix a choice
for $A_{\mathrm s}\in\mathcal A_{\mathrm s}$ and let
\[
 \mathfrak F_0(S,F,A_{\mathrm s})
 :=\{\cF_0:\cF\in\mathfrak F(S,F)
       \text{ with the small components }A_{\mathrm s}\}.
\]

\begin{claim}\label{claim:case2count}
There is an absolute constant $\eta>0$ such that
\[
 \log|\mathfrak F_0(S,F,A_{\mathrm s})|
 \le M-\eta\xi_1\left(M/k+|N(A_{\mathrm s})|-|A_{\mathrm s}|\right).
\]
\end{claim}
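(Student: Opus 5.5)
The plan is to place every $\cF_0\in\mathfrak F_0(S,F,A_{\mathrm s})$ inside a single sparse Kneser container built from $(S,F)$ and $A_{\mathrm s}$. I would then rerun the argument of Lemma~\ref{lem:sparse} on this container, keeping track of the extra saving from $N(A_{\mathrm s})$.

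\emph{Building the container.} Write $\widetilde{\mathcal X}:=\{[n]\setminus(u\cup\{x\}):u\in\mathcal X\}$ for $\mathcal X\subseteq\mathcal U$. For $\cF\in\mathfrak F(S,F)$ with small part $A_{\mathrm s}$, the members of $\cF_0$ outside $\cS_x$ correspond to $A_{\mathrm{lg}}\cup A_{\mathrm s}$, and $A_{\mathrm{lg}}\subseteq[\widehat A]\subseteq S$ by \eqref{eq:contain}. The members inside $\cS_x$ correspond to $B_x(\cF)\setminus F$, which avoids $F\cup N(A_{\mathrm s})$ by Lemma~\ref{lem:encoding}. Hence
\[
 \cF_0\setminus\widetilde{A_{\mathrm s}}\subseteq W':=\{[n]\setminus(u\cup\{x\}):u\in S\setminus A_{\mathrm s}\}\cup\{\{x\}\cup v:v\in\mathcal L\setminus(F\cup N(A_{\mathrm s}))\},
\]
where $\widetilde{A_{\mathrm s}}$ is fixed. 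So it suffices to count independent sets $\cI\subseteq W'$. Since $W'\subseteq W(S,F)$, \eqref{eq:certificate-degree} gives $\Delta(G[W'])\le2d/\log d$. The component bound gives $|A_{\mathrm s}|\le200M/d\le\xi_1M/C_1$, so by \eqref{inq:div} each such $\cI=\cF_0\setminus\widetilde{A_{\mathrm s}}$ has $\dv(\cI)>\xi_1M/4$ once $C_1$ is large.

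\emph{Size of the container.} Set $W'':=W(S,F)$. Removing star vertices shows
\[
 |W'|\le|W''|-|N(A_{\mathrm s})\setminus F|.
\]
Components of $A_x(\cF)$ have disjoint neighbourhoods. Therefore $F\cap N(A_{\mathrm s})\subseteq N(\widehat A)\setminus N(A_{\mathrm{lg}})$, which has size at most $M/k^9$ by \eqref{eq:joining}. Combined with \eqref{eq:size-e} applied to $W''$, this yields
\[
 |W'|\le M+\frac{2e_G(W'')}d-|N(A_{\mathrm s})|+\frac M{k^9}.
\]
I would also keep the plain estimate from \eqref{eq:size-e} for $W'$ itself, $|W'|\le M+2e_G(W')/d$.

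\emph{Counting.} I would redo the proof of Lemma~\ref{lem:sparse} for $W'$ with $\zeta=\xi_1/4$. If $e_G(W')<c\zeta Md/k$, the Das--Tran argument (Lemma~\ref{prop:DT}) forces $W'$ to be close to a star. That either contradicts $\dv(\cI)>\zeta M$, or it shows $|W'|\le M-c\zeta M/k$. Otherwise the greedy matching gives
\[
 \log\#\{\cI\}\le|W'|-\Omega\bigl(e_G(W')\log d/d\bigr)\le M-\Omega(\xi_1M/k).
\]
Separately, the trivial bound $\log\#\{\cI\}\le|W'|$ together with the size estimate through $W''$ gives a bound of the form $M-|N(A_{\mathrm s})|+O(e/d)$. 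To reach the exact form of the claim, I would take a convex combination of the two bounds with weight $\eta\xi_1$ on the second. The $|A_{\mathrm s}|$ loss is harmless because it only enters multiplied by $\eta\xi_1$, and $|N(A_{\mathrm s})|-|A_{\mathrm s}|\le|N(A_{\mathrm s})|$. The term $M/k^9$ is absorbed into $\xi_1M/k$ since $\xi_1\ge C_1(\log k)^2/d\ge C_1(\log k)^2/k^2\gg k^{-8}$.

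\emph{Main obstacle.} The step I expect to be hardest is making the $|N(A_{\mathrm s})|$ saving coexist with the edge term. The bound $|W''|\le M+2e_G(W'')/d$ carries a positive edge contribution, and $e_G(W'')$ may exceed $e_G(W')$. The matching saving is measured in $e_G(W')$ and beats $2e/d$ only by a $\log d$ factor. I would first try applying the matching argument directly to $W''$, whose independent subsets contain $\cI\cup\{\{x\}\cup v:v\in(N(A_{\mathrm s})\cap B)\}$ with that extra star part empty. This would run the supersaturation and matching estimates on one set, and subtract $|N(A_{\mathrm s})\setminus F|$ as coordinates forced to be absent. If that bookkeeping fails, I would fall back to bounding $e_G(W'')-e_G(W')$ by $|N(A_{\mathrm s})|\cdot 2d/\log d$ using the degree bound. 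This costs only a $2/\log d$ fraction of the $|N(A_{\mathrm s})|$ saving, which is acceptable.
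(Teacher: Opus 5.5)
Your argument works, but it controls the container size by a more roundabout route than the paper.

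The paper keeps $A_{\mathrm s}$ inside the container $W=W(S,F,A_{\mathrm s})$, noting that vertices coming from $A_{\mathrm s}$ have no neighbours in $W\cap\cS_x$. It applies Lemma~\ref{lem:sparse} with $\zeta=\xi_1/2$. It then bounds $|W|$ not by supersaturation but by the certificate inequality \eqref{eq:struct} together with \eqref{inq:tbound}. This gives $|S|-|F|\le3M/k$, hence $|W|\le M+4M/k-(|N(A_{\mathrm s})|-|A_{\mathrm s}|)$, with no edge term. The trivial count $\log|\mathfrak F_0|\le|W|$ is interpolated with $M-c_0\xi_1M/k$, and the extra $4\eta\xi_1M/k$ is absorbed by taking $\eta\le c_0/10$. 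So your ``main obstacle'' arises only because you estimate $|W(S,F)|=M+|S|-|F|$ via \eqref{eq:size-e} instead of via the certificate.

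Your fallback does close the argument if the bookkeeping is done as follows.
\begin{itemize}
\item Bound $e_G(W'')-e_G(W')\le(|N(A_{\mathrm s})|+|A_{\mathrm s}|)\,2d/\log d$. You omitted the vertices from $S\cap A_{\mathrm s}$, which is harmless since $|A_{\mathrm s}|\le200|N(A_{\mathrm s})|/d$.
\item Pair your size estimate with the unconditional matching bound $\log\#\le|W'|-\Omega(e_G(W')\log d/d)$, not with $\log\#\le|W'|$. This gives $\log\#\le M-(1-O(1/\log d))|N(A_{\mathrm s})|+M/k^9$.
\item Interpolate this, with weight $\eta\xi_1$, against the bound $M-\Omega(\xi_1M/k)$ from Lemma~\ref{lem:sparse}. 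After shrinking $\eta$, this yields the claim.
\end{itemize}
Interpolating the trivial bound, with its $+2e/d$ term, against only the $\Omega(\xi_1M/k)$ form of the first bound would fail. The degree bound alone allows $e_G(W')/d$ to be of order $M/\log d\gg M/k$.

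Your route has one advantage: it uses only the degree properties \eqref{eq:degree} of the certificate, and deleting $\widetilde{A_{\mathrm s}}$ lets you quote \eqref{eq:certificate-degree} verbatim. The paper's use of \eqref{eq:struct} instead makes the size bound a two-line computation.
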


\begin{proof}[Proof of the claim.]
We may assume $\mathfrak F_0(S,F,A_{\mathrm s})\ne\varnothing$.
Define the Kneser container
\[
 W=W(S,F,A_{\mathrm s})
 :=\{[n]\setminus(u\cup\{x\}):u\in S\cup A_{\mathrm s}\}
 \cup
 \{\{x\}\cup v:v\in\mathcal L\setminus(F\cup N(A_{\mathrm s}))\}.
\]
By~\eqref{eq:contain}, we have
$A_{\mathrm{lg}}\cup A_{\mathrm s}\subseteq
\widehat A\cup A_{\mathrm s}\subseteq S\cup A_{\mathrm s}$,
and then $\cF_0\setminus\cS_x=\cF\setminus\cS_x$ lies in the
first part of $W$. Also, $\cF_0\cap\cS_x$ lies in the second
part, since its encodings avoid both $F$ and $N(A_{\mathrm s})$.
Thus, for every $\cF_0\in\mathfrak F_0(S,F,A_{\mathrm s})$,
\[
 \cF_0\subseteq W.
\]

We first apply Lemma~\ref{lem:sparse}.
Since the vertices of $G$ obtained from $A_{\mathrm s}$ have no neighbors in $W\cap\cS_x$, an argument similar to that used to prove~\eqref{eq:certificate-degree} at the end of Section~\ref{sec:Kcontainer} gives
\[
 \Delta(G[W])\le\frac d{\log d}+\frac{rd}{k}
 \le\frac{2d}{\log d}
\]
for sufficiently large $k$.
By~\eqref{inq:div}, every $\cF_0\in\mathfrak F_0(S,F,A_{\mathrm s})$ has diversity greater than $\xi_1M/2$.
Applying Lemma~\ref{lem:sparse} to $W$ with $\zeta=\xi_1/2$ therefore gives
\begin{equation}\label{eq:case2diversity}
 \log|\mathfrak F_0(S,F,A_{\mathrm s})|
 \le M-c_0\xi_1M/k
\end{equation}
for an absolute constant $c_0>0$.

The above bound gives a saving but does not depend on $A_{\mathrm s}$.
To control the sum over all choices of $A_{\mathrm s}$, we also need a bound on $|W|$ in terms of $A_{\mathrm s}$.
By the certificate property~\eqref{eq:struct},
\[
 |S|-|F|
 \le\frac rk|F|+O\left(\frac{t}{d\log d}\right)
 \le\frac{3M}{k},
\]
where the last inequality uses~\eqref{inq:tbound}, $|F|\le g\le M$, and $r\le2$, for sufficiently large $k$.
Since the small and large components have disjoint
neighborhoods and $F\subseteq N(\widehat A)$, we have
\[
 F\cap N(A_{\mathrm s})
 \subseteq N(\widehat A)\setminus N(A_{\mathrm{lg}}).
\]
Together with~\eqref{eq:joining}, this gives
\[
\begin{aligned}
 |W|
 &=|S\cup A_{\mathrm s}|+M-|F\cup N(A_{\mathrm s})|
 \le M+|S|-|F|
       -\bigl(|N(A_{\mathrm s})|-|A_{\mathrm s}|\bigr)
       +|F\cap N(A_{\mathrm s})|\\
 &\le M+\frac{3M}{k} +\frac{M}{k^9}
       -\bigl(|N(A_{\mathrm s})|-|A_{\mathrm s}|\bigr)\le M+\frac{4M}{k}
       -\bigl(|N(A_{\mathrm s})|-|A_{\mathrm s}|\bigr).
\end{aligned}
\]
Simply counting all subsets of $W$, we obtain
\begin{equation}\label{eq:case2size}
 \log|\mathfrak F_0(S,F,A_{\mathrm s})|
 \le M+4M/k-(|N(A_{\mathrm s})|-|A_{\mathrm s}|).
\end{equation}

Let $\lambda=\eta\xi_1$, where $\eta=\min\{c_0/10,\,1/2\}$. Combining~\eqref{eq:case2diversity}
and~\eqref{eq:case2size} using
$\min\{a,b\}\le a^{1-\lambda}b^\lambda$, we obtain
\[
\begin{aligned}
 \log|\mathfrak F_0(S,F,A_{\mathrm s})|
 &\le (1-\lambda)(M-c_0\xi_1M/k) + \lambda(M+4M/k-(|N(A_{\mathrm s})|-|A_{\mathrm s}|))\\
 &\le M-[c_0(1-\lambda)-4\eta]\xi_1M/k
       -\eta\xi_1(|N(A_{\mathrm s})|-|A_{\mathrm s}|)\\
 &\le
 M-\eta\xi_1\left(M/k+|N(A_{\mathrm s})|-|A_{\mathrm s}|\right),
\end{aligned}
\]
where the last inequality uses $\xi_1\le1$ and the choice
of $\eta$. This proves the claim.
\end{proof}

\medskip

Combining the previous discussion and using~\eqref{eq:lowcerttotal}, \eqref{eq:Dcost}, and Claim~\ref{claim:case2count},
we conclude that the total number of families in Case~2
is at most
\[
\begin{aligned}
 & \sum_{a,g}\sum_{(S,F)\in\cW(a,g)}\sum_{\substack{B\subseteq F\\|B|\le M/k^9}}
 \sum_{A_{\mathrm s}\in\mathcal A_{\mathrm s}}
 |\mathfrak F_0(S,F,A_{\mathrm s})|
 \le 2^{O(M(\log k)^2/(kd))}2^{O(M/k^8)}
 \sum_{A_{\mathrm s}\in\mathcal A_{\mathrm s}}
2^{M-\eta\xi_1\left(M/k+|N(A_{\mathrm s})|-|A_{\mathrm s}|\right)}\\
 &\qquad\le 2^{M - \eta\xi_1M/2k}
 \sum_{A_{\mathrm s}\in\mathcal A_{\mathrm s}}
 2^{-\eta\xi_1(|N(A_{\mathrm s})|-|A_{\mathrm s}|)},
\end{aligned}
\]
where the last inequality follows from $\xi_1\ge C_1(\log k)^2/d$ with $C_1$ sufficiently large.
By a calculation similar to that in Case~1, we obtain
\[
 \sum_{A_{\mathrm s}\in\mathcal A_{\mathrm s}}
 2^{-\eta\xi_1(|N(A_{\mathrm s})|-|A_{\mathrm s}|)}
 \le \exp\left(\sum_{A\in\cG_{\mathrm s}}
          2^{-\eta\xi_1(|N(A)|-|A|)}\right)
 \le \exp\left(O\left(M2^{-\eta\xi_1 d/400}\right)\right)
 =2^{o(\xi_1M/k)},
\]
where we use $\eta\xi_1d\ge\eta C_1(\log k)^2$ to apply \eqref{eq:poly2}, and this bound together with $d\le k^2$ to obtain the last equality.

Thus,  the total number of families in Case~2
is at most
\[
2^{M - \Omega(\xi_1M/k)}.
\]
Together with the bound~\eqref{inq:case1bound} in Case~1, this completes the proof of Lemma~\ref{thm:global}.
\end{proof}

\section{Polymer models and the cluster expansion}\label{sec:polymer-background}

This section briefly introduces polymer models and the cluster expansion in graph-theoretic language. 
For more background and applications, see, e.g., Fern\'andez and Procacci~\cite{FernandezProcacci2007}, Jenssen and Perkins~\cite{JenssenPerkins2020}, and Koteck\'y and Preiss~\cite{kotecky1986cluster}; a textbook introduction to the cluster expansion appears in Friedli and Velenik~\cite[Chapter~5]{FriedliVelenik2017}. 

\medskip\noindent\textbf{Abstract polymer model.}
Let $H_{\cP}$ be a graph on a finite set $\cP$, with a loop at each vertex and no multiple edges. The vertices $\gamma\in\cP$ are called \emph{polymers}. Two polymers $\gamma,\gamma'$ are adjacent, written $\gamma\sim\gamma'$, when $\gamma\gamma'$ is an edge of $H_{\cP}$; in particular, each polymer is adjacent to itself. Equip every polymer $\gamma$ with a complex weight $\omega(\gamma)$ and a positive-integer size $|\gamma|$. In our application, polymers are finite vertex sets and $|\gamma|$ is their cardinality.
We refer to this weighted graph $(H_{\cP},\omega)$ as the \emph{polymer model}.

Let $\Omega_{\cP}$ be the collection of independent sets of $H_{\cP}$, including the empty set, where loops are ignored for independence.
We call each such independent set a \emph{compatible configuration}.
The \emph{polymer model partition function} is defined as
\[
\Xi(\cP,\omega):=\sum_{\Lambda\in\Omega_{\cP}}\prod_{\gamma\in\Lambda}\omega(\gamma),
\]
which is essentially the weighted independence polynomial of $(H_{\cP},\omega)$.
Sometimes, by abuse
of notation, we also refer to  $(\cP,\omega)$, $\Xi(\cP,\omega)$ as the polymer model.

\medskip\noindent\textbf{Cluster expansion.}
For an ordered multiset $\Gamma=(\gamma_1,\ldots,\gamma_m)$ of polymers, the \emph{incompatibility graph} $H_{\cP}[\Gamma]$ is the simple graph on $\Gamma$ with the edge set $E=\{\gamma_i\gamma_j:\ \gamma_i\sim\gamma_j \text{ in } H_{\cP}\}$. 
The ordered multiset $\Gamma$ is a \emph{cluster} if it is nonempty and
$H_{\cP}[\Gamma]$ is connected. For example, if $\gamma\sim\gamma'$,
then $\Gamma=(\gamma,\gamma',\gamma)$ is a cluster with
$H_{\cP}[\Gamma]=K_3$.

For a simple graph $G$, define its Ursell function by
\[
 \phi(G):=\frac1{|V(G)|!}\sum_{\substack{F\subseteq G\text{ spanning}\\F\text{ connected}}}(-1)^{e(F)}.
\]
The weight and total size of a cluster are then defined as
\[
 \omega(\Gamma):=\phi(H_{\cP}[\Gamma])\prod_{\gamma\in\Gamma}\omega(\gamma),
 \qquad
 \|\Gamma\|:=\sum_{\gamma\in\Gamma}|\gamma|.
\]
Thus total cluster size counts the sizes of the polymers with multiplicity. 

Let $\cC$ be the set of all clusters. The \emph{cluster expansion} is the formal power series of the logarithm of the partition function $\Xi(\cP,\omega)$, which takes the form
\begin{equation}\label{eq:clusterexpansion}
 \ln\Xi(\cP,\omega)=\sum_{\Gamma\in\cC}\omega(\Gamma).
\end{equation}
For $j\ge1$, its $j$th term is
\[
 L_j=\sum_{\substack{\Gamma\in\cC\\\|\Gamma\|=j}}\omega(\Gamma).
\]
The cluster expansion is an infinite series even when the polymer set is finite, as a cluster may repeat polymers. A sufficient condition for the convergence of the cluster expansion is
given by Koteck\'y and Preiss~\cite{kotecky1986cluster} in 1986.
\begin{theorem}[Koteck\'y--Preiss]\label{thm:KP}
Let $f,g:\cP\to[0,\infty)$ be two functions that satisfy
\[
 \sum_{\gamma\sim\gamma_0}|\omega(\gamma)|\exp\bigl(f(\gamma)+g(\gamma)\bigr)\le f(\gamma_0)
\]
for every $\gamma_0\in\cP$. Then the cluster expansion~\eqref{eq:clusterexpansion} converges absolutely.
Moreover, for a cluster $\Gamma\in\cC$, let
\(g(\Gamma)=\sum_{\gamma\in\Gamma}g(\gamma)\)
and write $\Gamma\sim\gamma_0$ if $\gamma\sim\gamma_0$ for some $\gamma\in\Gamma$. Then, for every $\gamma_0\in\cP$,
\begin{equation}\label{eq:KPtail}
 \sum_{\Gamma\in\cC,\ \Gamma\sim\gamma_0}|\omega(\Gamma)|\exp(g(\Gamma))
 \le f(\gamma_0).
\end{equation}
\end{theorem}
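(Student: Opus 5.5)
The plan is the standard tree-graph proof of convergence. First we bound the Ursell function by a count of spanning trees. Then we bound generating functions of polymer-labelled trees by induction on height, using the Koteck\'y--Preiss hypothesis as the inductive step. The main effort is the combinatorial bookkeeping: ordering factors and the choice of root in \eqref{eq:KPtail}.

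\textbf{Step 1 (tree-graph inequality).} For a connected simple graph $G$ on $m$ vertices we use Penrose's inequality
\[
\Bigl|\sum_{F\subseteq G\text{ spanning, connected}}(-1)^{e(F)}\Bigr|\le \tau(G),
\]
where $\tau(G)$ is the number of spanning trees of $G$. To prove it, fix a root and a vertex order, and assign to each spanning tree $T$ its Penrose interval $[T,R(T)]$ in the Boolean lattice of edge sets. Here $R(T)$ adds every edge of $G$ joining two vertices whose depths differ by at most one, subject to the usual lexicographic condition. These intervals partition the connected spanning subgraphs, and the alternating sum over each interval vanishes unless $R(T)=T$. Consequently $|\omega(\Gamma)|\le \frac{1}{m!}\tau(H_\cP[\Gamma])\prod_{\gamma\in\Gamma}|\omega(\gamma)|$ for every cluster $\Gamma=(\gamma_1,\dots,\gamma_m)$.

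\textbf{Step 2 (reduction to labelled trees).} Summing over ordered clusters with $\Gamma\sim\gamma_0$, we overcount as follows. Choose a position $i$ with $\gamma_i\sim\gamma_0$ to serve as root, and a spanning tree $T$ on $[m]$ whose edges join incompatible polymers. Relabelling so that the root is first, the $\frac1{m!}$ factor together with the $m!$ orderings leaves exactly one sum. It runs over rooted plane-free trees whose vertices carry polymer labels, with every child label incompatible with its parent label, and each tree is weighted by $\prod|\omega(\gamma_v)|e^{g(\gamma_v)}$ divided by the product of factorials of sibling counts. This is the usual exponential generating function of rooted labelled trees. Define $\Phi_h(\gamma)$ to be this weighted sum over trees of height at most $h$ whose root is labelled $\gamma$. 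The left side of \eqref{eq:KPtail} is then at most $\sum_{\gamma\sim\gamma_0}\lim_h\Phi_h(\gamma)$. Because the quantity being estimated is an upper bound, overcounting (several roots, trees versus graphs) is harmless.

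\textbf{Step 3 (induction on height).} We claim $\Phi_h(\gamma)\le|\omega(\gamma)|e^{f(\gamma)+g(\gamma)}$ for all $h$.
\begin{itemize}
\item For $h=0$, $\Phi_0(\gamma)=|\omega(\gamma)|e^{g(\gamma)}$, which satisfies the claim since $f\ge0$.
\item For the inductive step, the children of the root form an unordered multiset of subtrees of height at most $h$, each rooted at some $\gamma'\sim\gamma$. Hence
\[
\Phi_{h+1}(\gamma)=|\omega(\gamma)|e^{g(\gamma)}\exp\Bigl(\sum_{\gamma'\sim\gamma}\Phi_h(\gamma')\Bigr).
\]
\item By the inductive hypothesis and the Koteck\'y--Preiss condition, the exponent satisfies $\sum_{\gamma'\sim\gamma}|\omega(\gamma')|e^{f(\gamma')+g(\gamma')}\le f(\gamma)$, which closes the induction.
\end{itemize}
Letting $h\to\infty$ (monotone convergence) and summing over $\gamma\sim\gamma_0$ gives $\sum_{\gamma\sim\gamma_0}|\omega(\gamma)|e^{f(\gamma)+g(\gamma)}\le f(\gamma_0)$, once more by the hypothesis. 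This is exactly \eqref{eq:KPtail}.

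\textbf{Step 4 (absolute convergence).} Every cluster is incompatible with its own first polymer, and $\cP$ is finite. Summing \eqref{eq:KPtail} over $\gamma_0\in\cP$ with $g\ge0$ therefore shows that $\sum_{\Gamma}|\omega(\Gamma)|\le\sum_{\gamma_0}f(\gamma_0)<\infty$. Thus the series \eqref{eq:clusterexpansion} converges absolutely.

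The identification of the sum with $\ln\Xi$ follows from the formal cluster-expansion identity (Mayer's expansion of the logarithm of the hard-core partition function), which the statement takes as given. The main obstacle is Step 1, the Penrose partition scheme, together with getting the $1/m!$ and sibling-factorial bookkeeping in Step 2 exactly right. If that becomes messy, the fallback is to cite the tree-graph inequality from Fern\'andez--Procacci~\cite{FernandezProcacci2007} and carry out only Steps 2--4 in detail.
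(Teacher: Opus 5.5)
Your proof is correct. The paper gives no proof of this theorem and simply cites Koteck\'y--Preiss~\cite{kotecky1986cluster}; you instead give the now-standard tree-graph argument, as found e.g.\ in Fern\'andez--Procacci~\cite{FernandezProcacci2007}. That argument has three parts: Penrose's bound $|\phi(G)|\le\tau(G)/|V(G)|!$, domination of the rooted cluster sum by a polymer-labelled tree generating function, and induction on tree height with the hypothesis as the inductive step. Your route makes the result self-contained and shows exactly where $f,g\ge0$, the loop at every polymer (Step~4), and the finiteness of $\cP$ are used. The citation buys brevity.

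Two spots should be tightened, though neither is a gap.

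\textbf{Step 2 bookkeeping.} The phrases ``the $m!$ orderings'' and ``plane-free trees divided by sibling factorials'' are muddled. Inverse sibling factorials are the correct weights for \emph{plane} trees, i.e.\ trees with ordered children. The clean version is as follows.
\begin{itemize}
\item Bound $\mathbf 1[\exists i:\gamma_i\sim\gamma_0]\le\sum_i\mathbf 1[\gamma_i\sim\gamma_0]$.
\item Use symmetry to move the root to position~$1$; this turns $1/m!$ into $1/(m-1)!$.
\item Define $\Phi(\gamma):=\sum_m\frac{1}{(m-1)!}\sum_T\sum_{\gamma_2,\dots,\gamma_m}\prod_i|\omega(\gamma_i)|e^{g(\gamma_i)}$, with $\gamma_1=\gamma$ and $T$ ranging over trees on $[m]$ whose edges join incompatible polymers.
\end{itemize}
Splitting off the root and distributing $\{2,\dots,m\}$ among its subtrees gives exactly your recursion $\Phi_{h+1}(\gamma)=|\omega(\gamma)|e^{g(\gamma)}\exp\bigl(\sum_{\gamma'\sim\gamma}\Phi_h(\gamma')\bigr)$. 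So Step~3 goes through unchanged.

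\textbf{Identification with $\ln\Xi$.} The paper uses the numerical identity $\Xi(\cP,\omega)=\exp\bigl(\sum_\Gamma\omega(\Gamma)\bigr)$ in the proof of Theorem~\ref{thm:local-independent}. You should therefore spell out the short passage from the formal identity.
\begin{itemize}
\item Put $\Xi_z:=\Xi(\cP,z^{|\cdot|}\omega)$. This is a polynomial in $z$, and replacing $\omega(\gamma)$ by $z^{|\gamma|}\omega(\gamma)$ turns $\omega(\Gamma)$ into $z^{\|\Gamma\|}\omega(\Gamma)$.
\item Since $\sum_\Gamma|\omega(\Gamma)|<\infty$ by your Step~4, the series $F(z)=\sum_\Gamma\omega(\Gamma)z^{\|\Gamma\|}$ converges uniformly on $|z|\le1$.
\item By the formal identity, $e^{F(z)}$ and $\Xi_z$ have the same Taylor coefficients at $0$. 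Hence they agree for $|z|<1$, and by continuity also at $z=1$.
\end{itemize}
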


\section{Proof of Lemma~\ref{thm:local}: Counting families in a fixed star phase}\label{sec:local}
Throughout this section, fix an arbitrary center $x\in[n]$. Our goal is to count the intersecting families $\cF$ satisfying
\(
 |\cF\setminus\cS_x|\le\xi_2M.
\)
Recall from Section~\ref{sec:containers} that $H$ is the bipartite inclusion graph with parts
\[
 \mathcal U=\binom{[n]\setminus\{x\}}{k+r-1},\qquad
 \mathcal L=\binom{[n]\setminus\{x\}}{k-1},
\]
and every intersecting family $\cF$ is encoded by the independent set $A_x(\cF)\cup B_x(\cF)$ of $H$.
We say that an independent set $I$ of $H$ is \emph{valid} if $I\cap\mathcal U$ is $r$-intersecting. 
By Lemma~\ref{lem:encoding}, these are precisely the independent sets corresponding to intersecting families. Thus, to prove Lemma~\ref{thm:local}, it is equivalent to counting the number of valid independent sets with \[|I\cap\mathcal U|\le\xi_2M.\]

In the remainder of this section, we first introduce polymer models for independent sets of \(H\) and then state our key theorem, Theorem~\ref{thm:local-independent}, which estimates the polymer partition functions and thereby the number of independent sets.
We then establish convergence of the corresponding cluster expansions, bound their tails, and compute their leading terms.
Finally, we prove Theorem~\ref{thm:local-independent} and deduce Lemma~\ref{thm:local}.
All neighborhoods and closures are taken with respect to $H$, and we omit the subscript $H$.

\subsection{Polymer models for independent sets of $H$}
By Lemma~\ref{lem:encoding}, every independent set $I$ of $H$ can be constructed by first choosing its upper part $A=I\cap\mathcal U$, and then choosing its lower part $I\cap\mathcal L$ as \emph{any} subset of $\mathcal L\setminus N(A)$. 
Thus, for each fixed $A$, there are exactly $2^{M-|N(A)|}$
choices for $I$.
Accordingly, for
$\xi_2>0$ define
\[
 Z^{\ind}(\xi_2)
 :=\sum_{A\subseteq\mathcal U,\ |A|\le\xi_2 M}
   2^{-|N(A)|},
 \qquad
 Z^{\val}(\xi_2)
 :=\sum_{\substack{A\subseteq\mathcal U,\ |A|\le\xi_2 M\\
                  A\ r\text{-intersecting}}}
   2^{-|N(A)|}.
\]
Namely, $2^MZ^{\ind}(\xi_2)$ counts the independent sets $I$ of $H$ with $|I\cap \mathcal{U}|\le\xi_2M$, while
$2^MZ^{\val}(\xi_2)$ counts the valid ones. 

Set
\[
 a_0:=\binom{n-18}{k+r-1}.
\]
Define the ordinary polymer set by
\[
 \cP^{\ind}
 :=\{\gamma\subseteq\mathcal U:\gamma\ne\varnothing,\
   \gamma\text{ is 2-linked},\ |[\gamma]|\le a_0\},
\]
where $\gamma\sim\gamma'$ if $\gamma\cup\gamma'$ is 2-linked.
We also define a valid polymer set as
\[
 \cP^{\val}
 :=\{\gamma\in\cP^{\ind}:\gamma\text{ is }r\text{-intersecting}\},
\]
where $\gamma\sim\gamma'$ if $\gamma\cup\gamma'$ is 2-linked or
is not $r$-intersecting. 

We remark that the closure cutoff $|[\gamma]|\le a_0$ is needed to establish
convergence of the cluster expansion using Sapozhenko's container
method, as it provides the required expansion bound for the polymers.
Subsets $A\subseteq\mathcal U$ with a 2-linked component whose closure
exceeds $a_0$ will be accounted for separately in the proof of
Theorem~\ref{thm:local-independent}.

For both polymer sets, set
\[
 \omega(\gamma):=2^{-|N(\gamma)|}.
\]
Let $\sigma\in\{\ind,\val\}$. By the definition of adjacency, distinct polymers in a compatible
configuration $\Lambda\in\Omega_{\cP^\sigma}$ have disjoint
neighborhoods in $H$. Hence,
\begin{equation}\label{eq:factorization}
 \prod_{\gamma\in\Lambda}\omega(\gamma)=2^{-\sum_{\gamma\in\Lambda} \left|N(\gamma)\right|}
 =2^{-\left|N\left(\bigcup_{\gamma\in\Lambda}\gamma\right)\right|}.
\end{equation}
Note that in the valid model, adjacency also imposes the $r$-intersection condition between distinct components.

Recall that
\[
\Xi(\cP^\sigma,\omega)
=\sum_{\Lambda\in\Omega_{\cP^\sigma}}
\prod_{\gamma\in\Lambda}\omega(\gamma).
\]
Note that neither \(\Xi(\cP^\sigma,\omega)\) nor \(Z^\sigma(\xi_2)\) necessarily bounds the other: the former includes configurations with total size exceeding \(\xi_2M\), while the latter includes sets \(A\subseteq\mathcal U\) with a 2-linked component whose closure exceeds \(a_0\). Nevertheless, Theorem~\ref{thm:local-independent} shows that these differences are asymptotically negligible.

When $r=1$, we need one more polymer model to describe the typical structure.
We first state the definition of \emph{niceness} used in Theorem~\ref{thm:odd}
and Lemma~\ref{thm:local}.

\begin{definition}[BGLW~\cite{BaloghGarciaLiWagner2024}]\label{def:nice}
Let $n=2k+1$, let $\cF\subseteq\binom{[n]}k$ be intersecting,
and fix $x\in[n]$. We say that $\cF$ is \emph{nice with center $x$}
if every 2-linked component of $A_x(\cF)$ has size at most two.
We call $\cF$ \emph{nice} if it is nice with center $x$ for some
$x\in[n]$.
\end{definition}
Define
\[
 \cP_{\le2}^{\val}
 :=\{\gamma\in\cP^{\val}:|\gamma|\le2\},
\qquad
\text{and}
\qquad
 Z_{\le2}^{\val}(\xi_2)
:=\sum_{\Lambda\in\Omega_{\cP_{\le2}^{\val}},\  \|\Lambda\|\le\xi_2M}
       \prod_{\gamma\in\Lambda}\omega(\gamma).
\]
Thus, $2^MZ_{\le2}^{\val}(\xi_2)$ counts the intersecting families $\cF$ with $|\cF\setminus\cS_x|\le\xi_2M$ that are nice with center $x$.

For $\sigma\in\{\ind,\val\}$, let $\cC^\sigma$ be the set of clusters
of $(\cP^\sigma,\omega)$, and, for $j\ge1$, write $L_j^\sigma$ for
its $j$th cluster term. Let
\[
 \epsilon_r(k):=
 \begin{cases}
  k^{17}2^{-k},&r=1,\\
  2^{-k^2},&r=2,
 \end{cases}
\]
which will be the error term in the asymptotic formulas for the polymer partition functions.

We now state the main theorem of this section, which counts independent sets through the corresponding polymer models. 
\begin{theorem}\label{thm:local-independent}
There is an absolute $C_2>0$ such that, for every fixed
$0<\xi_2\le C_2$, every $r\in\{1,2\}$, and every
$\sigma\in\{\ind,\val\}$,
\begin{equation*}\label{eq:local-partition}
 Z^\sigma(\xi_2)=\Xi(\cP^\sigma,\omega)
   +O\!\left(\exp\bigl(-\Omega(\xi_2M/k)\bigr)\right)=\exp\left(L_1^\sigma+L_2^\sigma
            +O(\epsilon_r(k))\right).
\end{equation*}
If $r=1$, then also
\begin{equation*}\label{eq:nice-local-partition}
Z_{\le2}^{\val}(\xi_2)
 =\Xi(\cP_{\le2}^{\val},\omega)
  +O\!\left(\exp\bigl(-\Omega(\xi_2M/k)\bigr)\right)
 =\exp\left(L_1^{\val}+L_2^{\val}
           +O(k^{17}2^{-k})\right).
\end{equation*}
\end{theorem}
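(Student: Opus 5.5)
The proof has three parts. First, Kotecký--Preiss convergence for $(\cP^\sigma,\omega)$ with a tail bound. Second, comparison of $Z^\sigma(\xi_2)$ with $\Xi(\cP^\sigma,\omega)$. Third, computation of $L_1^\sigma,L_2^\sigma$ and control of the remaining clusters.

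\textbf{Convergence.} I would apply Theorem~\ref{thm:KP} with $f(\gamma)=|\gamma|/k^{2}$ (or $\theta|\gamma|$ for a small $\theta$ depending on $r$) and $g(\gamma)=c\,|N(\gamma)|$-type weights, growing with $|\gamma|$. The condition to verify is that, for each $\gamma_0$, $\sum_{\gamma\sim\gamma_0}2^{-|N(\gamma)|}e^{f+g}\le f(\gamma_0)$. Every $\gamma\sim\gamma_0$ in the ordinary model lies within $H^2$-distance one of $\gamma_0$, so there are at most $|\gamma_0|\rho d^2$ starting vertices. The sum then splits by size.
\begin{itemize}
\item For polymers with $|N(\gamma)|\le k^{12}$, Lemma~\ref{lem:small} (its rooted form, via Proposition~\ref{prop:connected-sets} and Proposition~\ref{prop:isoperimetry}) gives a contribution $O(|\gamma_0|d^2 2^{-d/200+\dots})$.
\item For large polymers, the cutoff $|[\gamma]|\le a_0=\binom{n-18}{k+r-1}$ gives, via Kruskal--Katona, expansion $g-a\ge \Omega(g/k)\ge (8r/k)g$. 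Lemma~\ref{lem:reconstruct} then bounds the number of such polymers by $2^{g-c_0(g-a)}$, which beats the weight $2^{-g}$ by a factor $2^{-\Omega(g/k)}$. This is summable even after paying for $e^{f+g}$ with $f,g=O(|\gamma|/k^2)$.
\end{itemize}
The valid model has more incompatibilities: $\gamma\sim\gamma_0$ also when $\gamma\cup\gamma_0$ fails to be $r$-intersecting. This is the main obstacle for $r=1$. Here $1$-intersecting in $\mathcal U=\binom{[2k]}{k}$ forbids only complements, so each vertex of $\gamma_0$ excludes a single vertex. The extra neighbors are therefore still at most $|\gamma_0|$ roots times the same rooted sums, and the same bound goes through. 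For $r=2$, pairs that fail to be 2-intersecting are few per vertex ($O(k)$), which again only multiplies the number of roots polynomially.

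\textbf{Comparing $Z^\sigma$ and $\Xi$.} Any $A\subseteq\mathcal U$ with $|A|\le\xi_2M$ is either a compatible configuration, given by its 2-linked components, or has a component with closure larger than $a_0$. I would bound the second kind by the argument of Lemma~\ref{thm:global}, Case~1. Such a component has $g\ge a_0\ge 2^{-18}M$-ish, and $|A|\le\xi_2 M$ with $\xi_2\le C_2$ small forces expansion $g-a\ge(8r/k)g$, since $|[A]|\le|A|\rho d$-type bounds together with isoperimetry at size $\le \xi_2M$ give $g\ge(1+\Omega(1/k))a$. Lemma~\ref{lem:reconstruct} then yields a total contribution $\exp(-\Omega(M/k))$. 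Conversely, configurations in $\Xi$ with $\|\Lambda\|>\xi_2M$ are bounded via the tail~\eqref{eq:KPtail}: a configuration of total size $s$ has weight at most $e^{-g\text{-gain}}$, which Chernoff-style exponential tilting turns into $\Xi\cdot e^{-\Omega(\xi_2M/k)}$. The same argument handles $Z_{\le2}^{\val}$ versus $\Xi(\cP^{\val}_{\le2},\omega)$.

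\textbf{Cluster terms.} Clusters are ordered by total size. In $L_1$, singletons $\{u\}$ give $N2^{-d}$, which for $r=1$ is $N2^{-k}$. In $L_2$:
\begin{itemize}
\item pairs $\{u,v\}$ with $|u\cap v|=k+r-2$ (2-linked, $|N|=2d-|N(u)\cap N(v)|$) give the positive part;
\item the Ursell term $-\tfrac12\sum_{u\sim v}\omega(u)\omega(v)$, including $u=v$, gives the negative part.
\end{itemize}
For $r=1$ these give $\frac N2 k^2 2^{-2k+1}\cdot$ and $-N2^{-2k}\cdot(\dots)$, which should match $\frac N2(k^2-2)2^{-2k}$ after including complement-incompatibilities in the valid model. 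I would verify this count explicitly. The tail $\sum_{\|\Gamma\|\ge3}|\omega(\Gamma)|$ follows from~\eqref{eq:KPtail}, with $g(\gamma)$ chosen to give a factor like $2^{-k}$ per extra unit of size beyond two. Summing over $N$ roots gives $N\cdot k^{O(1)}2^{-3k}\cdot 2^{k}$... i.e.\ $O(k^{17}2^{-k})$ for $r=1$, and $2^{-k^2}$ for $r=2$, where $d\approx k^2/2$. The restricted model $\cP_{\le2}^{\val}$ has the same $L_1,L_2$, since those involve only polymers of size at most two, and its tails are bounded identically.
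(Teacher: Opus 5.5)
Your overall architecture is the paper's: Kotecký--Preiss convergence, a KP tail bound for clusters of total size at least three, a two-sided comparison of $Z^\sigma(\xi_2)$ with $\Xi(\cP^\sigma,\omega)$, and the restricted model $\cP^{\val}_{\le2}$ inheriting everything. However, the comparison step fails for sets $A$ with a 2-linked component $C$ satisfying $|[C]|>a_0$.

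You claim that $|A|\le\xi_2M$ forces $g-a\ge(8r/k)g$, so that Lemma~\ref{lem:reconstruct} applies. It does not. Kruskal--Katona controls $|N(C)|/|[C]|$ through the size of the \emph{closure}, not of $C$. The bound $|[C]|\le\rho d|C|$ gives nothing, since $\rho d\xi_2M\gg N$. For instance, take a greedy covering of $\mathcal L$ by members of $\mathcal U$ and make it 2-linked by adding at most as many further vertices. This gives a 2-linked $C$ with $|C|=O(M\log k/d)\le\xi_2M$, $N(C)=\mathcal L$ and $[C]=\mathcal U$, so $g=M<N=a$. More generally, the cutoff $a_0=\binom{n-18}{k+r-1}$ is essentially where the guaranteed ratio $|N(\gamma)|/|[\gamma]|$ falls to about $1+16r/k$. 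Above it the $8r/k$ expansion is not available, and that is exactly the regime you are treating with Lemma~\ref{lem:reconstruct}.

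The fix needs no expansion, and you already wrote down its ingredient. Biregularity gives $|N(A)|\ge|[A]|/\rho>a_0/\rho\ge2^{-18}M$. On the other hand, there are at most $\binom{N}{\le\xi_2M}\le2^{2H_2(\xi_2)M}$ sets $A$ with $|A|\le\xi_2M$. Choosing $C_2$ with $2H_2(C_2)<2^{-19}$ makes the total contribution $2^{-\Omega(M)}$; this is where the absolute constant $C_2$ in the statement comes from.

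Separately, your KP weight cannot be a single $c|N(\gamma)|$; it has to be piecewise. For polymers with $|N(\gamma)|>k^{12}$ the only saving is $2^{-c_0(g-a)}$ with $g-a$ possibly of order $g/k$, which forces $c=O(1/k)$ there. The tail, by contrast, needs $q(\Gamma)\ge3d\ln2-O(\log k)$ for every cluster with $\|\Gamma\|\ge3$, i.e.\ essentially all of $|N(\gamma)|\ln2$ on small polymers. The paper takes $q(\gamma)=|N(\gamma)|\ln2-6|\gamma|\ln k$ when $|N(\gamma)|\le k^{12}$, and $q(\gamma)=\frac{c_0\ln2}{4}(|N(\gamma)|-|[\gamma]|)=\Omega(k^{11})$ otherwise. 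It pairs this with $f(\gamma)=\eta|\gamma|/k$; a constant $\theta$ in $f$ would break the large-polymer sum for the same reason.

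The correction in $q$ must be per vertex. A correction proportional to $|N(\gamma)|$ has to beat the $(2ek^4)^{|\gamma|}$ count of 2-linked sets for polymers with only $|N(\gamma)|\approx d|\gamma|/200$. The resulting tail is then $k^{O(1)}2^{-k}$ with an exponent far larger than the stated $17$.

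Clusters of total size at least three built only from polymers of size one and two also need the check $q(\gamma)\ge|\gamma|(d\ln2-6\ln k)-k^{r-1}\ln2$. Finally, your displayed tail arithmetic has a spurious factor $2^k$: the bound is $\frac{\eta N}{k}\,k^{18}2^{-3d+O(k^{r-1})}$.
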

The implicit constants for error are absolute, while the threshold
for sufficiently large $k$ may depend on $\xi_2$.
\subsection{Convergence of the cluster expansion}
For every $\sigma\in\{\ind,\val\}$ and $\gamma\in\cP^\sigma$,
the closure cutoff $|[\gamma]|\le a_0=\binom{n-18}{k+r-1}$,
together with Proposition~\ref{prop:isoperimetry}, gives
\[
 \frac{|N(\gamma)|}{|[\gamma]|}
 \ge \prod_{i=0}^{r-1}\frac{k+i}{(n-18)-k+1-i}=
  \begin{cases}
 \displaystyle\frac{k}{k-16},&r=1,\\[2mm]
 \displaystyle\frac{k(k+1)}{(k-15)(k-16)},&r=2.
 \end{cases}
\]
Consequently, for sufficiently large $k$, every polymer satisfies
\begin{equation}\label{eq:localgap}
 1-\frac{|[\gamma]|}{|N(\gamma)|}\ge\frac{8r}{k}.
\end{equation}

We call a polymer $\gamma$ \emph{elementary} if
$|N(\gamma)|\le k^{12}$.
For non-elementary polymers, we can estimate their contribution
through the container lemma, as they satisfy the hypotheses of
Lemma~\ref{lem:reconstruct}.
For elementary polymers, Proposition~\ref{prop:isoperimetry}
gives a stronger expansion bound, so a more elementary counting
argument for connected subsets suffices, as in the proof of
Lemma~\ref{lem:small}.

Fix a sufficiently small absolute constant $\eta>0$. Define
\begin{equation}\label{def:ffun}
 f(\gamma):=\frac{\eta}{k}|\gamma|,
\end{equation}
and
\begin{equation}\label{eq:full-reserve}
 q(\gamma):=
 \begin{cases}
 |N(\gamma)|\ln2-6|\gamma|\ln k,
 &|N(\gamma)|\le k^{12},\\[2mm]
  \dfrac{c_0\ln2}{4}
 \bigl(|N(\gamma)|-|[\gamma]|\bigr),
 &|N(\gamma)|>k^{12},
 \end{cases}
\end{equation}
where $c_0>0$ is the constant in Lemma~\ref{lem:reconstruct}.

\begin{lemma}\label{lem:convergence}
For all sufficiently large $k$, every
$\sigma\in\{\ind,\val\}$, and every $\gamma_0\in\cP^\sigma$,
\[
 \sum_{\gamma\sim\gamma_0}
 \omega(\gamma)e^{f(\gamma)+q(\gamma)}
 \le f(\gamma_0).
\]
Consequently, the cluster expansion for
$\ln\Xi(\cP^\sigma,\omega)$ converges absolutely.
\end{lemma}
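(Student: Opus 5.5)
We verify the Kotecký--Preiss condition of Theorem~\ref{thm:KP}, with $f$ and $q$ in the roles of $f$ and $g$; note $\omega$ and $f$ are nonnegative, and $q\ge 0$ as checked below. Since $\cP^{\val}\subseteq\cP^{\ind}$ and valid adjacency includes the ordinary adjacency plus extra pairs, the valid case needs separate handling of the extra pairs. First, the ordinary adjacency. If $\gamma\sim\gamma_0$ because $\gamma\cup\gamma_0$ is 2-linked, then $\gamma$ contains a vertex at $H^2$-distance at most $2$ (in fact within $N_{H^2}(\gamma_0)\cup\gamma_0$) from $\gamma_0$. The number of such anchor vertices is at most $(1+\rho d^2)|\gamma_0|\le 3k^4|\gamma_0|$. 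Hence it suffices to show, for every fixed $u\in\mathcal U$,
\[
 \sum_{\gamma\ni u}2^{-|N(\gamma)|}e^{f(\gamma)+q(\gamma)}\le \frac{\eta}{3k^5},
\]
which gives total at most $\eta|\gamma_0|/k=f(\gamma_0)$.

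\textbf{Elementary polymers}, with $|N(\gamma)|\le k^{12}$. Here $2^{-|N(\gamma)|}e^{q(\gamma)}=k^{-6|\gamma|}$ and $e^{f(\gamma)}\le 2^{|\gamma|}$. By Proposition~\ref{prop:connected-sets}, there are at most $(e\rho d^2)^{s-1}\le (2ek^4)^{s-1}$ 2-linked sets of size $s$ containing $u$. The sum is therefore at most $\sum_{s\ge1}(2ek^4)^{s-1}2^sk^{-6s}=O(k^{-6})$, which is below $\eta/(3k^5)$ for large $k$. We must also check $q\ge0$: by Proposition~\ref{prop:isoperimetry}(ii), $|N(\gamma)|\ge d|\gamma|/200\ge k|\gamma|/200$, which dominates $6|\gamma|\log_2 k\cdot$const.

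\textbf{Non-elementary polymers}, with $|N(\gamma)|>k^{12}$. Group them by $(a,g)=(|[\gamma]|,|N(\gamma)|)$. By \eqref{eq:localgap}, $g-a\ge 8rg/k$, so Lemma~\ref{lem:reconstruct} gives $|\cG(a,g)|\le 2^{g-c_0(g-a)}$. This bound is over all polymers, not just those containing $u$, but since $g-a\ge 8g/k>8k^{11}$, the crude bound suffices. We also have $|\gamma|\le a\le g$, so $f(\gamma)\le\eta g/k\le (\eta/8)(g-a)$. The contribution is then at most
\[
 \sum_{a,g}2^{-c_0(g-a)}e^{(\eta/8)(g-a)+(c_0\ln2/4)(g-a)}\le \sum_{a,g}2^{-c_0(g-a)/2}\le (N+1)(M+1)2^{-4c_0k^{11}},
\]
for $\eta$ small. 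This is superexponentially small because $\log(NM)=O(k)$. The main point to check is that the weights $e^{q}$ and $e^{f}$ each eat only a constant fraction of the saving $c_0(g-a)$, which is exactly why $q$ carries the factor $c_0/4$.

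\textbf{Extra adjacency in the valid model.} When $r=1$, any family is $1$-intersecting in $\mathcal U=\binom{[2k]}{k}$ except for complementary pairs. A polymer $\gamma$ with $\gamma\cup\gamma_0$ not $r$-intersecting contains a vertex $w$ such that $|w\cap v|<r$ for some $v\in\gamma_0$. For $r=1$ there is exactly one such $w$ per $v$, namely the complement. For $r=2$ there are $O(k)$ such $w$, those meeting $v$ in at most one point within $[n]\setminus\{x\}$. So the anchor count grows by at most $O(k)|\gamma_0|$, and the same per-anchor bound $\eta/(3k^5)$ absorbs it. Absolute convergence then follows from Theorem~\ref{thm:KP}.
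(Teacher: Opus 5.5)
Your proposal is correct and follows essentially the same route as the paper. It uses the same anchor count $(1+\rho d^2)|\gamma_0|$, plus $(rd/k)|\gamma_0|$ extra anchors from non-$r$-intersecting pairs in the valid model, the same connected-set bound $(2ek^4)^{s-1}$ for elementary polymers, and Lemma~\ref{lem:reconstruct} grouped by $(a,g)$ for non-elementary polymers; the only detail to add is that Proposition~\ref{prop:isoperimetry}(ii) needs $|\gamma|\le 2k^{12}$, which follows from $|\gamma|\le\rho|N(\gamma)|$ by biregularity.
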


\begin{proof}
We first verify that $q$ is nonnegative, as required by
Theorem~\ref{thm:KP}.
For an elementary polymer $\gamma$, biregularity gives
$|\gamma|\le\rho|N(\gamma)|\le2k^{12}$.
Thus Proposition~\ref{prop:isoperimetry}(ii) gives
$|N(\gamma)|\ge d|\gamma|/200$, and hence
\begin{equation}\label{est:qfunction}
    q(\gamma)\ge|\gamma|(d\ln2/200-6\ln k)\ge0
\end{equation}
for sufficiently large $k$.
For non-elementary polymers, $q(\gamma)\ge0$ follows
from~\eqref{eq:localgap}.

For an elementary polymer $\gamma$, our choices of $f$ and $q$ give
\[
 \omega(\gamma)e^{f(\gamma)+q(\gamma)}
 =2^{-|N(\gamma)|}
  \exp\left(
    \frac{\eta|\gamma|}{k}
    +|N(\gamma)|\ln2-6|\gamma|\ln k
  \right)
 =e^{\eta|\gamma|/k}k^{-6|\gamma|}.
\]
Since $H^2[\mathcal U]$ has maximum degree at most
$\rho d^2\le2k^4$, Proposition~\ref{prop:connected-sets}
bounds the number of 2-linked $s$-sets containing a fixed
$u\in\mathcal U$ by $(2ek^4)^{s-1}$.
Taking $\eta\le1$, we therefore obtain
\begin{equation}\label{est:epolymer}
 \sum_{\substack{\gamma\in\cP^\sigma,\ u\in\gamma\\
                  |N(\gamma)|\le k^{12}}}
 \omega(\gamma)e^{f(\gamma)+q(\gamma)}
 \le
 \sum_{s\ge1}(2ek^4)^{s-1}e^{\eta s/k}k^{-6s}=
 \frac{e^{\eta/k}k^{-6}}
      {1-2e^{1+\eta/k}k^{-2}}
 \le2k^{-6}.
\end{equation}

For a non-elementary polymer $\gamma$, write
$a:=|[\gamma]|$ and $g:=|N(\gamma)|>k^{12}$.
Then our choices of $f$ and $q$ give
\[
 \omega(\gamma)e^{f(\gamma)+q(\gamma)}
 =2^{-g}\exp\left(
   \frac{\eta|\gamma|}{k}
   +\frac{c_0\ln2}{4}(g-a)
 \right)
 \le2^{-g}\exp\left(
   \frac{\eta g}{k}
   +\frac{c_0\ln2}{4}(g-a)
 \right),
\]
where we used $|\gamma|\le a\le g$.
By~\eqref{eq:localgap}, these polymers satisfy the hypotheses
of Lemma~\ref{lem:reconstruct}, which gives
$|\cG(a,g)|\le2^{g-c_0(g-a)}$. Thus, for each fixed $a,g$,
\[
 \log\left(
 \sum_{\substack{\gamma\in\cP^\sigma,\ |[\gamma]|=a\\
                 |N(\gamma)|=g>k^{12}}}
 \omega(\gamma)e^{f(\gamma)+q(\gamma)}
 \right)
 \le g-c_0(g-a)+\left(-g +\frac{\eta g}{k\ln2}
       +\frac{c_0}{4}(g-a)\right)
 =\frac{\eta g}{k\ln2}-\frac{3c_0}{4}(g-a)
 \le-\frac{c_0}{2}(g-a),
\]
where the last inequality follows from
$g-a\ge8rg/k$ and a sufficiently small choice of $\eta$.
Since $g>k^{12}$, we have $g-a>8rk^{11}$.
Summing over at most $(N+1)(M+1)=2^{O(k)}$ possible pairs $(a,g)$
therefore gives
\begin{equation}\label{est:nepolymer}
 \sum_{\substack{\gamma\in\cP^\sigma\\
                  |N(\gamma)|>k^{12}}}
 \omega(\gamma)e^{f(\gamma)+q(\gamma)}
 \le2^{-\Omega(k^{11})}.
\end{equation}

If $\gamma\cup\gamma_0$ is 2-linked, then $\gamma$ contains a vertex either in $\gamma_0$ or in its neighborhood in $H^2[\mathcal U]$; there are at most
$(\rho d^2+1)|\gamma_0|$ such vertices.
In the valid model, $\gamma$ and $\gamma_0$ are also adjacent if their union is not $r$-intersecting. In this case, $\gamma$ contains a vertex $u$ with $|u\cap v|<r$ for some $v\in\gamma_0$.
By~\eqref{eq:outside-degree}, there are at most $(rd/k)|\gamma_0|$ such vertices $u$.

Finally, summing~\eqref{est:epolymer} over these vertices and adding~\eqref{est:nepolymer}, we obtain, for every $\sigma\in\{\ind,\val\}$ and $\gamma_0\in\cP^\sigma$,
\[
 \sum_{\gamma\sim\gamma_0}
 \omega(\gamma)e^{f(\gamma)+q(\gamma)}
 \le O(|\gamma_0|k^4)\cdot 2k^{-6}+2^{-\Omega(k^{11})}
 \le\frac{\eta|\gamma_0|}{k}
 =f(\gamma_0)
\]
for sufficiently large $k$.
By Theorem~\ref{thm:KP}, the cluster expansion for both
$\ln\Xi(\cP^\sigma,\omega)$ converges absolutely.
\end{proof}

\subsection{Bounding the tail of the cluster expansion}
To further obtain the asymptotic formulas for the partition functions, we need a tail bound for the cluster expansion.
\begin{lemma}\label{lem:clustertail}
For every $\sigma\in\{\ind,\val\}$, the polymer model $(\cP^\sigma,\omega)$ satisfies
\begin{equation}\label{eq:tail}
 \sum_{\Gamma\in\cC^\sigma,\ \|\Gamma\|\ge3}|\omega(\Gamma)|=O(\epsilon_r(k)).
\end{equation}
When $r=1$, the same bound holds for the clusters of $(\cP_{\le2}^{\val},\omega)$.
\end{lemma}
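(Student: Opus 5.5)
The approach is the standard one: run Kotecký–Preiss again with a heavier decay function $g$, so that clusters of total size at least $3$ pay an extra factor, and then sum over a root. Concretely, I would apply Theorem~\ref{thm:KP} with $f$ as in~\eqref{def:ffun} and $g(\gamma):=q(\gamma)$ from~\eqref{eq:full-reserve}; Lemma~\ref{lem:convergence} gives exactly the required hypothesis. Every cluster $\Gamma$ is incompatible with some polymer, in fact with each of its own members. So by~\eqref{eq:KPtail}, for each singleton polymer $\gamma_0=\{u\}$ (which lies in $\cP^\sigma$), the clusters $\Gamma\sim\{u\}$ satisfy $\sum|\omega(\Gamma)|e^{q(\Gamma)}\le f(\{u\})=\eta/k$. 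A cluster $\Gamma\sim\{u\}$ contains a polymer that meets $u$ or is 2-linked to it, or (in the valid model) fails $r$-intersection with it. Since every cluster contains some vertex $u$ of $\mathcal U$, summing over $u\in\mathcal U$ gives
\[
\sum_{\Gamma,\ \|\Gamma\|\ge3}|\omega(\Gamma)|\le N\cdot\frac{\eta}{k}\cdot\max_{\|\Gamma\|\ge3}e^{-q(\Gamma)}.
\]
It therefore suffices to show $e^{-q(\Gamma)}$ is tiny whenever $\|\Gamma\|\ge3$.

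For this I would lower-bound $q(\Gamma)=\sum_{\gamma\in\Gamma}q(\gamma)$. Non-elementary polymers have $q\ge\Omega(k^{11})$ by~\eqref{eq:localgap}, which is negligible. An elementary polymer has $q(\gamma)=|N(\gamma)|\ln2-6|\gamma|\ln k$. For $r=2$ we have $d=k(k+1)/2$, and for $|\gamma|\ge1$ the isoperimetry (Proposition~\ref{prop:isoperimetry}) gives $|N(\gamma)|\ge d$. Hence each polymer contributes $q\ge\Omega(k^2)$ and $e^{-q(\Gamma)}\le 2^{-3dk^{...}}$; the gain from three polymers is $2^{-\Omega(k^2)}$ each, so $N\,2^{-3\cdot\Omega(k^2)}$ is comfortably $O(2^{-k^2})$ once constants are tracked. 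Shifting some of $f$ into $g$ if needed, $|N(\gamma)|\ge d|\gamma|(1-o(1))$ yields total $\ge (3/2-o(1))k^2\ln 2$, which beats $N=2^{O(k)}$.

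For $r=1$, where $d=k$, I need a sharper count. Two distinct $(k)$-subsets of a $2k$-set share at most $k-1$ common $(k-1)$-subsets, so $|N(\gamma)|\ge 2k-1$ for $|\gamma|=2$; in general $|N(\gamma)|\ge |\gamma|k-\binom{|\gamma|}{2}$ for small $\gamma$. Hence a cluster of total size $j\ge3$ has $q(\Gamma)\ge (jk-O(j^2))\ln2-6j\ln k$. This gives $e^{-q(\Gamma)}\le k^{6j}2^{-jk+O(j^2)}$, and for $j=3$ we get $N\cdot k^{18}2^{-3k}=O(k^{17}2^{-k})$ because $N\approx 2^{2k}/\sqrt{k}$. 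Larger $j$ decays faster, but summing over $j$ uniformly needs care: the crude reserve loses $6\ln k$ per vertex while the gain is only about $k\ln 2$ per vertex, so I would handle this with $g(\gamma)=q(\gamma)-\ln$-type adjustments, or by restricting the maximum to $\|\Gamma\|=3$ and treating larger $j$ via geometric decay. Clusters containing repeated polymers are covered by the same bound, since $q(\Gamma)$ counts multiplicity. For $\cP^{\val}_{\le2}$, the polymer set is a subset with the same weights, so the hypothesis of Theorem~\ref{thm:KP} still holds and the same computation applies verbatim.

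The main obstacle is the $r=1$ case. I have to get the exponent exactly $k^{17}2^{-k}$ by pairing the factor $N\sim 4^k$ against $2^{-3k}$, and the reserve $q$ must be calibrated so that $|N(\Gamma)|$-type savings of $\approx 3k$ survive the overlap between polymers in a cluster. Incompatible polymers may share neighbors, so $\sum_\gamma|N(\gamma)|$ over the cluster is what appears, and that sum is at least $\|\Gamma\|(k-O(\|\Gamma\|))$. I would check the case $\|\Gamma\|=3$ (three singletons, or a pair plus a singleton) by hand to confirm the $k^{18}$ polynomial factor.
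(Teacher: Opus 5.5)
\textbf{Gap: the uniform lower bound on $q(\Gamma)$ for $\|\Gamma\|\ge3$.} Your framework matches the paper's: Kotecký--Preiss with $g=q$, the tail bound \eqref{eq:KPtail} rooted at singletons $\{u\}$, a sum over $u\in\mathcal U$, and then a uniform lower bound on $q(\Gamma)$ over clusters with $\|\Gamma\|\ge3$. The problem is this last step, which carries all the content. You do not establish it for clusters containing a polymer of size at least $3$, and in particular not for large elementary polymers.

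For $r=2$, your ``three polymers, $\Omega(k^2)$ each'' argument covers only clusters with at least three members. A cluster of total size $3$ can be a single polymer with $|\gamma|=3$, or a pair plus a singleton. Then $|N(\gamma)|\ge d$ per polymer gives a reserve of only about $k^2\ln2/2$ or $k^2\ln2$. After multiplying by $\eta N/k$ this yields at best $2^{-k^2+O(k)}$, not $O(2^{-k^2})$. Your fallback $|N(\gamma)|\ge(1-o(1))d|\gamma|$ is false. Take a $(k+2)$-set $Y\subseteq[n]\setminus\{x\}$: the polymer $\gamma=\binom{Y}{k+1}$ is 2-linked, $2$-intersecting and elementary, has closure of size $k+2$, and satisfies $|N(\gamma)|=\binom{k+2}{3}=d|\gamma|/3$.

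For $r=1$, the Bonferroni bound $|N(\gamma)|\ge k|\gamma|-\binom{|\gamma|}{2}$ degrades once $|\gamma|$ is of order $k$ and is useless beyond about $2k$. You leave these sizes open. Also, two distinct $k$-subsets of a $2k$-set have at most \emph{one} common $(k-1)$-subset, not $k-1$; your bound $2k-1$ for pairs implicitly uses the former.

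\textbf{The missing ingredient} is Proposition~\ref{prop:isoperimetry}(ii). As in \eqref{est:qfunction}, every elementary polymer has $q(\gamma)\ge|\gamma|(d\ln2/200-6\ln k)$, which exceeds $3d\ln2$ once $|\gamma|>1000$. For $3\le|\gamma|\le1000$, Bonferroni with at most $k^{r-1}$ common neighbours per pair gives $q(\gamma)\ge3d\ln2-18\ln k-O(k^{r-1})$. Non-elementary polymers have $q(\gamma)=\Omega(k^{11})$.

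Since $q\ge0$, a single polymer of size at least $3$ in $\Gamma$ already gives the required bound on $q(\Gamma)$. Clusters built only from polymers of size $1$ or $2$ are handled by adding the bounds $q(\gamma_i)\ge|\gamma_i|(d\ln2-6\ln k)-k^{r-1}\ln2$.

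With this, your concern about ``summing over $j$'' and modifying $g$ disappears. The bound \eqref{eq:KPtail} already sums over all clusters, so only $\inf_{\|\Gamma\|\ge3}q(\Gamma)$ matters. The per-polymer bounds above show that clusters with $\|\Gamma\|>3$ are never worse than those with $\|\Gamma\|=3$. For $\cP^{\val}_{\le2}$ no polymer of size at least $3$ occurs, so there your argument is already complete once the common-neighbour count is corrected.
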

\begin{proof}
We first show that for every polymer $\gamma\in \cP^\sigma$ with $|\gamma|\ge3$,
\begin{equation}\label{eq:reserve-three-p}
 q(\gamma)\ge3d\ln2-18\ln k-O(k^{r-1}).
\end{equation}
For an elementary polymer $\gamma$ with $3\le |\gamma|\le1000$,
since any two distinct vertices in $\mathcal U$ have at most $\binom{k+r-2}{k-1}
 \le k^{r-1}$ common
neighbors in $H$, we have
\begin{equation}\label{est:qfun-cn}
     q(\gamma)
 \ge\left(d|\gamma|-k^{r-1}\binom{|\gamma|}{2}\right)\ln2
     -6|\gamma|\ln k
 \ge 3(d\ln2-6\ln k)-O(k^{r-1}).
\end{equation}
For an elementary polymer $\gamma$ with $|\gamma|>1000$,
\eqref{est:qfunction} gives, for sufficiently large $k$,
\[
 q(\gamma)
 \ge|\gamma|\left(\frac{d\ln2}{200}-6\ln k\right)
 \ge\frac{|\gamma|d\ln2}{300}>3d\ln2.
\]
For a non-elementary polymer $\gamma$, \eqref{eq:localgap}
and $|N(\gamma)|>k^{12}$ give
\[
 q(\gamma)
 =\frac{c_0\ln2}{4}|N(\gamma)|\left(1-\frac{|[\gamma]|}{|N(\gamma)|}\right)
 \ge\frac{2c_0r\ln2}{k}|N(\gamma)|
 =\Omega(k^{11}).
\]
Since $d=O(k^2)$, this exceeds $3d\ln2$ for sufficiently large $k$. This proves~\eqref{eq:reserve-three-p}.

Write $q(\Gamma)=\sum_{\gamma\in\Gamma}q(\gamma)$. We next show that every cluster $\Gamma\in\cC^\sigma$ with $\|\Gamma\|\ge3$ satisfies
\begin{equation}\label{eq:reserve-three}
 q(\Gamma)\ge3d\ln2-18\ln k-O(k^{r-1}).
\end{equation}
By~\eqref{eq:reserve-three-p} and the nonnegativity of $q$, any cluster containing a polymer of size at least three satisfies~\eqref{eq:reserve-three}.
It remains to consider clusters consisting only of polymers of size $1$ or $2$.
Write $\Gamma=(\gamma_1,\ldots,\gamma_m)$, where $|\gamma_i|\in\{1, 2\}$.
For every $\gamma_i$, an argument similar to that in~\eqref{est:qfun-cn} gives
\[
 q(\gamma_i)\ge
 |\gamma_i|(d\ln2-6\ln k)-k^{r-1}\ln2.
\]
Using $\|\Gamma\|\ge3$ and taking $k$ sufficiently large, we obtain
\[
 q(\Gamma)=\sum_{i=1}^m q(\gamma_i)\ge\|\Gamma\|\bigl(d\ln2-6\ln k-k^{r-1}\ln2\bigr)\ge3d\ln2-18\ln k-3k^{r-1}\ln2,
\]
which proves~\eqref{eq:reserve-three}.

By Lemma~\ref{lem:convergence} and~Theorem~\ref{thm:KP}, for every singleton polymer $\{u\}\in\cP^\sigma$,
\[
 \sum_{\Gamma\in\cC^\sigma,\ \Gamma\sim\{u\}}|\omega(\Gamma)|\exp(q(\Gamma))
 \le f(\{u\})=\eta/k.
\]
Since every cluster is adjacent to a singleton polymer $\{u\}$
for some $u\in\mathcal U$, we have
\[
 \sum_{\Gamma\in\cC^\sigma}
 |\omega(\Gamma)|\exp(q(\Gamma))
 \le\sum_{u\in\mathcal U}f(\{u\})
 =\frac{\eta N}{k}.
\]
Together with~\eqref{eq:reserve-three}, this gives
\[
 \sum_{\Gamma\in\cC^\sigma,\ \|\Gamma\|\ge3}
 |\omega(\Gamma)|
 \le\frac{\eta N}{k}
 \exp\bigl(-3d\ln2+18\ln k+O(k^{r-1})\bigr)
 \le\eta Nk^{17}2^{-3d+O(k^{r-1})}
 =
 \begin{cases}
 O(k^{17}2^{-k}),&r=1,\\
 O(2^{-k^2}),&r=2,
 \end{cases}
\]
where we used $N\le2^{2k+r-1}$ and the formulas for $d$.

When $r=1$, every cluster of
$(\cP_{\le2}^{\val},\omega)$ belongs to $\cC^{\val}$
and has the same weight in both models.
The same bound therefore holds for this restricted model.
\end{proof}

\subsection{Computing the first two terms}\label{est:cluster}
Lemma~\ref{lem:clustertail} shows that the leading contribution
to the cluster expansion comes from its first two terms.
We next compute explicit formulas for $L_1^\sigma$ and
$L_2^\sigma$ for $\sigma\in\{\ind,\val\}$.

\medskip\noindent\textbf{Computing $L_1^\sigma$.} Every polymer of size one is a single vertex of $\mathcal U$. There are $N$ such polymers, and each has weight $2^{-d}$. There is only one type of cluster of size one, consisting of a single polymer of size one, with Ursell function $1$. Therefore,
\[
 L_1^{\ind}=L_1^{\val}=N2^{-d}.
\]
In particular, for $r=2$ and $\sigma\in\{\ind,\val\}$,
\[
 L_1^\sigma=\binom{2k+1}{k}2^{-k(k+1)/2}
 =2^{-k^2/2+O(k)}=o(1).
\]

\medskip\noindent\textbf{Computing $L_2^\sigma$.} 
A polymer of size two consists of two distinct vertices $u,v\in\mathcal U$ with a common neighbor in $H$, i.e., $|u\cap v|\ge k-1$.
For each integer $k-1\le i \le k+r-2$, let
\[
 D_i:=\binom{k+r-1}{i}\binom{k}{k+r-1-i},
 \qquad
 h_i:=\binom{i}{k-1}.
\]
For every $u\in\mathcal U$, there are exactly $D_i$
choices of $v$ with $|u\cap v|=i$, so there are $ND_i/2$
such polymers.
The vertices $u,v$ have exactly $h_i$ common neighbors,
and hence each such polymer has weight $2^{-2d+h_i}$.
For sufficiently large $k$, all these pairs are $r$-intersecting and therefore belong to both
$\cP^{\ind}$ and $\cP^{\val}$.

There are two types of clusters of size two. 
The first consists of an ordered pair of adjacent singleton
polymers, which may be equal, with Ursell function $-1/2$.
In the ordinary model $(\cP^{\ind},\omega)$, there are $N$ such clusters with
equal singleton polymers and $N\sum_{i=k-1}^{k+r-2}D_i$
with distinct singleton polymers.
The second type consists of a single polymer of size two, with Ursell function $1$. Combining these contributions gives
\[
 L_2^{\ind}=-\frac12\left(N+N\sum_{i=k-1}^{k+r-2}D_i\right)2^{-2d}
   +\frac N2\sum_{i=k-1}^{k+r-2}D_i2^{-2d+h_i}=\frac N22^{-2d}\left(-1+\sum_{i=k-1}^{k+r-2}D_i(2^{h_i}-1)\right).
\]
In the valid model $(\cP^{\val},\omega)$, two singleton polymers $\{u\}$ and $\{v\}$
are also adjacent when $|u\cap v|<r$.
For each $u\in\mathcal U$, there are exactly
$\binom{k+r-1}{r-1}=rd/k$ such vertices $v$.
For $k\ge4$, $u$ and $v$ have no common neighbor in $H$,
so $\{u\}$ and $\{v\}$ are not adjacent in the ordinary model.
Therefore, there are $Nrd/k$ additional ordered pairs of adjacent
singleton polymers, each with Ursell function $-1/2$.
Hence
\[
 L_2^{\val}
 =L_2^{\ind}-\frac12\cdot\frac{Nrd}{k}\,2^{-2d}
 =L_2^{\ind}-\frac{Nrd}{2k}2^{-2d}.
\]
For $r=1$, this gives
\[
 L_2^{\ind}=\frac N2(k^2-1)2^{-2k},\qquad
 L_2^{\val}=\frac N2(k^2-2)2^{-2k}.
\]
For $r=2$, we have for $\sigma\in\{\ind,\val\}$,
\[
 L_2^\sigma=O\left(Nk^2 2^{k-2d}\right)=2^{-k^2+O(k)}=o(1).
\]

\subsection{Proof of Theorem~\ref{thm:local-independent}}

For $\sigma\in\{\ind,\val\}$, Lemmas~\ref{lem:convergence} and~\ref{lem:clustertail} give
\[
\Xi(\cP^\sigma,\omega)
 =\exp\left(L_1^\sigma+L_2^\sigma+O(\epsilon_r(k))\right).
\]
It remains to relate the polymer partition function $\Xi(\cP^\sigma,\omega)$ to the sum $Z^\sigma(\xi_2)$. 
Recall that there are two types of discrepancy: $\Xi(\cP^\sigma,\omega)$ includes compatible configurations $\Lambda$ with $\|\Lambda\|>\xi_2M$, whereas $Z^\sigma(\xi_2)$ includes sets $A\subseteq\mathcal U$ with a 2-linked component
whose closure exceeds $a_0$. 
We will show that both discrepancies are negligible.
The following lemma controls the first type; the second is handled directly in the proof of the theorem.

\begin{lemma}\label{lem:defecttail}
For every fixed $\xi_2>0$ and every $\sigma\in\{\ind,\val\}$,
\[
 \sum_{\Lambda\in\Omega_{\cP^\sigma},\ \|\Lambda\|>\xi_2 M}
 \prod_{\gamma\in\Lambda}\omega(\gamma)
 \le \exp\bigl(-\Omega(\xi_2M/k)\bigr).
\]
\end{lemma}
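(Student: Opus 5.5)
We bound the sum by an exponential-moment (Chernoff-type) argument built on the Kotecký--Preiss reserve already established in Lemma~\ref{lem:convergence}. For a parameter $\theta>0$, every $\Lambda$ with $\|\Lambda\|>\xi_2M$ satisfies $1\le e^{\theta(\|\Lambda\|-\xi_2M)}$. Hence the left-hand side is at most
\[
 e^{-\theta\xi_2M}\sum_{\Lambda\in\Omega_{\cP^\sigma}}\prod_{\gamma\in\Lambda}\omega(\gamma)e^{\theta|\gamma|}
 =e^{-\theta\xi_2M}\,\Xi(\cP^\sigma,\omega_\theta),
\]
where $\omega_\theta(\gamma):=\omega(\gamma)e^{\theta|\gamma|}$ is a tilted weight. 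We take $\theta=\eta/(2k)$, with $\eta$ the constant in~\eqref{def:ffun}.

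The key step is to show that the tilted model still satisfies the Kotecký--Preiss condition. The function $f(\gamma)=\eta|\gamma|/k$ has $e^{f(\gamma)}$ already present in Lemma~\ref{lem:convergence}, so we absorb the tilt into $f$. The proof of Lemma~\ref{lem:convergence} gives room to spare: in the elementary case the bound $2k^{-6}$ in~\eqref{est:epolymer} tolerates an extra factor $e^{\eta s/(2k)}$, and in the non-elementary case the extra $\eta g/(2k\ln 2)$ is absorbed by $(g-a)\ge 8rg/k$ for small $\eta$. We therefore verify, exactly as in that proof,
\[
 \sum_{\gamma\sim\gamma_0}\omega_\theta(\gamma)e^{f(\gamma)+q(\gamma)}\le f(\gamma_0).
\]
Theorem~\ref{thm:KP} then applies to $(\cP^\sigma,\omega_\theta)$. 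Its tail bound~\eqref{eq:KPtail}, summed over the singletons $\{u\}$, $u\in\mathcal U$ (every cluster is adjacent to some singleton), gives
\[
 \ln\Xi(\cP^\sigma,\omega_\theta)\le\sum_{\Gamma}|\omega_\theta(\Gamma)|\le\sum_{u\in\mathcal U}f(\{u\})=\eta N/k .
\]
This bound is too weak on its own, since $\eta N/k$ is comparable to $\theta\xi_2M$ when $\xi_2$ is small. It must be sharpened using the reserve $q$.

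For the sharpening we use $e^{q(\Gamma)}$. Every polymer has $q(\gamma)\ge |\gamma|(d\ln2/200-6\ln k)$ or $q=\Omega(k^{11})$, so $q(\Gamma)\ge\Omega(\log k)$ for every cluster. Arguing as in Lemma~\ref{lem:clustertail}, this gives
\[
 \ln\Xi(\cP^\sigma,\omega_\theta)\le(\eta N/k)e^{-\min_\Gamma q(\Gamma)}\le \eta N k^{-2}
\]
(for $r=1$, $q\ge d\ln 2-6\ln k-O(1)$ for every cluster, which is even better: $N2^{-k}k^{O(1)}$). Thus
\[
 \ln(\text{LHS})\le-\frac{\eta\xi_2M}{2k}+\frac{\eta N}{k^2}\le-\frac{\eta\xi_2M}{4k}
\]
for $k$ large, using $N\le 2M$ and the fact that $\xi_2$ is fixed. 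This is the claimed $\exp(-\Omega(\xi_2M/k))$.

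The main obstacle is the middle step: checking that the tilt by $e^{\theta|\gamma|}$ with $\theta\asymp 1/k$ does not destroy the convergence estimates. This matters most for non-elementary polymers, where $|\gamma|$ may be as large as $g$ and only the gap $g-a\ge 8rg/k$ from~\eqref{eq:localgap} pays for it. If the constants are tight, we would take $\theta=c\eta/k$ with a smaller constant $c$, which affects only the implicit constant in $\Omega(\xi_2M/k)$.
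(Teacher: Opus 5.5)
Your argument is correct, but it bounds the tilted partition function by a heavier route than the paper. The first step is the same: an exponential tilt in $\|\Lambda\|$ with $\theta\asymp\eta/k$. You then treat $\omega_\theta$ as a new polymer model. You re-verify the Koteck\'y--Preiss condition for it and apply the tail bound \eqref{eq:KPtail} over all singletons. That gives only $\eta N/k$, which is too weak, so you sharpen it with the reserve $e^{q(\Gamma)}$ and the lower bound $\min_\Gamma q(\Gamma)\ge d\ln2/200-6\ln k$.

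The paper avoids the cluster expansion here entirely. It takes the tilt to be $f$ itself ($\theta=\eta/k$), drops the compatibility constraint, and writes
\[
\sum_{\Lambda\in\Omega_{\cP^\sigma}}\prod_{\gamma\in\Lambda}\omega(\gamma)e^{f(\gamma)}\le\prod_{\gamma\in\cP^\sigma}\bigl(1+\omega(\gamma)e^{f(\gamma)}\bigr)\le\exp\Bigl(\sum_{\gamma\in\cP^\sigma}\omega(\gamma)e^{f(\gamma)}\Bigr).
\]
The single-polymer sum is at most $2Nk^{-6}+2^{-\Omega(k^{11})}=o(M/k)$. This follows directly from \eqref{est:epolymer} summed over $u\in\mathcal U$ and from \eqref{est:nepolymer}, using only $q\ge0$.

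What the paper's version buys is brevity. It needs no new convergence check and no appeal to $\ln\Xi=\sum_\Gamma\omega(\Gamma)$ for a modified weight. It also already gives $O(Nk^{-6})$, better than your sharpened $\eta N/k^2$. Indeed, your detour through the weak $\eta N/k$ bound disappears once you notice that $\ln\Xi(\cP^\sigma,\omega_\theta)\le\sum_\gamma\omega_\theta(\gamma)$ holds trivially. Your route does give a finer estimate of $\ln\Xi(\cP^\sigma,\omega_\theta)$, of order $N2^{-d}k^{O(1)}$ if you track the minimal reserve, but the lemma does not need that precision.

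One small point: you say $q(\Gamma)\ge\Omega(\log k)$ but then use $e^{-\min q}\le k^{-1}$. The bound you actually have, $q(\Gamma)=\Omega(d)$, justifies this, and in any case any $q(\Gamma)\to\infty$ suffices.
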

\begin{proof}
Since $q(\gamma)\ge0$, summing~\eqref{est:epolymer}
over $u\in\mathcal U$ and adding~\eqref{est:nepolymer} gives
\begin{equation}\label{eq:tiltedsingle}
 S_\sigma:=
 \sum_{\gamma\in\cP^\sigma}\omega(\gamma)e^{f(\gamma)}
 \le2Nk^{-6}+2^{-\Omega(k^{11})}
 =o(M/k),
\end{equation}
where we used $N=\rho M=O(M)$.
By the definition of $f$ (see~\eqref{def:ffun}),
\[
 \sum_{\gamma\in\Lambda}f(\gamma)
 =\frac{\eta}{k}\|\Lambda\|.
\]
Dropping the compatibility condition and using $1+t\le e^t$,
we therefore obtain
\[
 \sum_{\Lambda\in\Omega_{\cP^\sigma}} e^{\eta\|\Lambda\|/k}\prod_{\gamma\in\Lambda}\omega(\gamma)
 = \sum_{\Lambda\in\Omega_{\cP^\sigma}}
 \prod_{\gamma\in\Lambda}\omega(\gamma)e^{f(\gamma)}
 \le\prod_{\gamma\in\cP^\sigma}
 \bigl(1+\omega(\gamma)e^{f(\gamma)}\bigr)
 \le e^{S_\sigma}.
\]
Consequently,
\[
 \sum_{\substack{\Lambda\in\Omega_{\cP^\sigma},\ \|\Lambda\|>\xi_2M}}\prod_{\gamma\in\Lambda}\omega(\gamma)
 \le e^{-\eta\xi_2M/k}\sum_{\Lambda\in\Omega_{\cP^\sigma}}e^{\eta\|\Lambda\|/k}
 \prod_{\gamma\in\Lambda}\omega(\gamma)
 \le\exp\left(-\frac{\eta\xi_2M}{k}+o\left(\frac Mk\right)\right)
 \le\exp\left(-\frac{\eta\xi_2M}{2k}\right)
\]
for sufficiently large $k$.
This proves the lemma.
\end{proof}

We now have all the ingredients for the proof of Theorem~\ref{thm:local-independent}.

\begin{proof}[Proof of Theorem~\ref{thm:local-independent}]
Choose $C_2\in(0,1/4)$ sufficiently small that $2H_2(C_2)<2^{-19}$.
Fix $r\in\{1,2\}$, $\sigma\in\{\ind,\val\}$, and $0<\xi_2\le C_2$.  
Lemmas~\ref{lem:convergence} and~\ref{lem:clustertail} give
\begin{equation}\label{eq:fullexpansion}
\Xi(\cP^\sigma,\omega)
 =\exp\left(L_1^\sigma+L_2^\sigma+O(\epsilon_r(k))\right).
\end{equation}

Next we estimate $|Z^\sigma(\xi_2)-\Xi(\cP^\sigma,\omega)|$. Consider sets $A\subseteq\mathcal U$ with
$|A|\le\xi_2M$ and $|[A]|>a_0$.
Note that $a_0/N\to2^{-17}$ and $N= \rho M$. Then, for sufficiently large $k$,
\[
a_0/\rho\ge2^{-18}M.
\]
By biregularity and Proposition~\ref{lem:binomial-estimates}, we have
\[
 |N(A)|\ge\frac{|[A]|}{\rho}
 >\frac{a_0}{\rho}\ge2^{-18}M,
\]
and the number of
such sets $A$ is at most
\[
 \sum_{j\le\xi_2M}\binom Nj
 \le2^{\rho M H_2(\xi_2/\rho)}
 \le2^{2MH_2(C_2)} < 2^{2^{-19}M}.
\]
Every set $A$ counted by $Z^\sigma(\xi_2)$ but not represented in $\Xi(\cP^\sigma,\omega)$ must satisfy $|A|\le\xi_2M$ and $|[A]|>a_0$.
Hence
\begin{equation}\label{eq:Zupper}
 Z^\sigma(\xi_2)-\Xi(\cP^\sigma,\omega)
 \le
 \sum_{\substack{A\subseteq\mathcal U\\
                  |A|\le\xi_2M,\ |[A]|>a_0}}
 2^{-|N(A)|}
 < 2^{2^{-19}M-2^{-18}M}
 =2^{-\Omega(M)}.
\end{equation}

Conversely, every configuration $\Lambda\in\Omega_{\cP^\sigma}$ counted by $\Xi(\cP^\sigma,\omega)$ but not represented in $Z^\sigma(\xi_2)$ must satisfy $\|\Lambda\|>\xi_2M$. Lemma~\ref{lem:defecttail} therefore gives
\begin{equation}\label{eq:Zlower}
\Xi(\cP^\sigma,\omega)- Z^\sigma(\xi_2)\le\exp(-\Omega(\xi_2M/k)).
\end{equation}
Both partition functions are at least $1$, and the comparison error is $o(\epsilon_r(k))$ for each fixed $\xi_2>0$.
Combining~\eqref{eq:Zupper}, \eqref{eq:Zlower}, and~\eqref{eq:fullexpansion} gives
 \[
 Z^\sigma(\xi_2)=\Xi(\cP^\sigma,\omega)
   +O\!\left(\exp\bigl(-\Omega(\xi_2M/k)\bigr)\right)=\exp\left(L_1^\sigma+L_2^\sigma
            +O(\epsilon_r(k))\right),\] 
which proves the first part of Theorem~\ref{thm:local-independent}.

For $r=1$, the model on $\cP_{\le2}^{\val}$ retains all clusters of total size at most two. Lemmas~\ref{lem:convergence} and~\ref{lem:clustertail}
therefore give
\[
 \Xi(\cP_{\le2}^{\val},\omega)
 =\exp\left(L_1^{\val}+L_2^{\val}+O(k^{17}2^{-k})\right).
\]
Since all polymers have size at most two, their closures have size at most $2\rho d \le a_0$ for sufficiently large $k$.
Moreover, the proof of Lemma~\ref{lem:defecttail} applies unchanged
to the restricted model, giving
\[
0\le \Xi(\cP_{\le2}^{\val},\omega)- Z_{\le2}^{\val}(\xi_2)\le\exp(-\Omega(\xi_2M/k)).
\]
Hence, we obtain
\[
Z_{\le2}^{\val}(\xi_2)
 =\Xi(\cP_{\le2}^{\val},\omega)
  +O\!\left(\exp\bigl(-\Omega(\xi_2M/k)\bigr)\right)
 =\exp\left(L_1^{\val}+L_2^{\val}
           +O(k^{17}2^{-k})\right),
\]
which proves the second part of Theorem~\ref{thm:local-independent}.
\end{proof}

\subsection{Deduction of Lemma~\ref{thm:local}}

\begin{proof}[Proof of Lemma~\ref{thm:local}]
Let $C_2$ be the absolute constant supplied by Theorem~\ref{thm:local-independent}, and fix $0<\xi_2\le C_2$ and $x\in[n]$.

For $r=2$, the computations in Section~\ref{est:cluster} give
\[
 L_1^{\val}+L_2^{\val}+O(\epsilon_2(k))\le2^{-k^2/3}
\]
for sufficiently large $k$.
By Theorem~\ref{thm:local-independent}, the number of nontrivial intersecting families $\cF\subseteq\binom{[n]}k$ satisfying $|\cF\setminus\cS_x|\le\xi_2M$ is at most
\[
2^MZ^{\val}(\xi_2)-2^M \le 2^M\left(\exp(2^{-k^2/3})-1\right)\le2^{M-c_2k^2},
\]
for some absolute $c_2>0$. This proves the first part of Lemma~\ref{thm:local} for $n=2k+2$.

For $r=1$, the computations in Section~\ref{est:cluster} give
\[
 L_1^{\val}=N2^{-k},\qquad L_2^{\val}=\frac N2(k^2-2)2^{-2k}.
\]
In particular, $L_1^{\val}\sim2^k/\sqrt{\pi k}$ and $L_2^{\val}\sim k^{3/2}/(2\sqrt\pi)$. 
By Theorem~\ref{thm:local-independent},
\[
 \#\{\cF\text{ intersecting}:|\cF\setminus\cS_x|\le\xi_2M\}
 =2^MZ^{\val}(\xi_2)=2^M\exp\left(N2^{-k}+\frac N2(k^2-2)2^{-2k}
 +O(k^{17} 2^{-k})\right).
\]
Since there are at most $n2^M$ trivial families and
$N2^{-k}\sim2^k/\sqrt{\pi k}$, removing the trivial families
does not change the asymptotic formula, including its
$O(k^{17}2^{-k})$ error term. This proves the first part of Lemma~\ref{thm:local} for $n=2k+1$.

Finally, the second part of Theorem~\ref{thm:local-independent} shows that the number of intersecting families $\cF$ that are nice with center $x$ and satisfy $|\cF\setminus\cS_x|\le\xi_2M$ is
\[
 2^MZ_{\le2}^{\val}(\xi_2)
 =2^M\exp\left(N2^{-k}+\frac N2(k^2-2)2^{-2k}
 +O(k^{17}2^{-k})\right).
\]
Removing the trivial families again leaves this formula
unchanged, proving the second part of Lemma~\ref{thm:local}.
\end{proof}

\section{Concluding Remarks}
It would be interesting to study a weighted version of the problem.
For fugacity $\lambda>0$, define
\[
Z_{n,k}(\lambda)=\sum_{\mathcal F}\lambda^{|\mathcal F|},
\]
where the sum ranges over all intersecting families
$\mathcal F\subseteq\binom{[n]}k$. Equivalently,
$Z_{n,k}(\lambda)$ is the hard-core partition function of the
Kneser graph $\KG(n,k)$. The case $\lambda=1$ recovers the
enumeration problem considered here. 

For general $n>2k$,  a natural question is to determine the
asymptotic behavior of $Z_{n,k}(\lambda)$ and the typical structure
of an intersecting family sampled with probability proportional
to $\lambda^{|\mathcal F|}$.
For $n\ge 2k+2$, one may ask for which ranges of $\lambda$ we have
\[
Z_{n,k}(\lambda)=(n+o(1))(1+\lambda)^M,
\qquad M=\binom{n-1}{k-1},
\]
and a typical family is contained in a unique full star.
In particular, for $n=2k+2$, we believe that our approach can be
adapted to establish this conclusion whenever
$\lambda\ge(\log k)^C/k$, for some absolute constant $C>0$.
For $n=2k+1$, typical families already contain defects outside
a full star at $\lambda=1$, motivating the study of how these
defects depend on the fugacity and when they become negligible.
More generally, it would be interesting to locate the transitions
between containment in a full star, concentration near a star
with defects, and other possible typical structures.

\bigskip
\noindent\textbf{Declaration of AI use.} The author proposed using Sapozhenko’s container method, particularly in Step 1. ChatGPT (5.6 Sol) was then used to discuss details of the argument, perform calculations, optimize parameters, assist with the proofs of several technical lemmas, and prepare an initial draft. ChatGPT (6-Astra) was subsequently used to improve the paper’s presentation and writing at the author’s direction. The author takes full responsibility for the content of the paper.

\bibliographystyle{abbrv}
\bibliography{typical_intersecting_2k_plus_2}

\end{document}